\documentclass[12pt]{article}
\usepackage[utf8]{inputenc}
\usepackage{amsthm}
\usepackage{amsmath}
\usepackage{amssymb}
\usepackage{amscd}
\usepackage[all]{xy}
\usepackage[margin=1in]{geometry}
\usepackage{comment}
\usepackage{inputenc}
\usepackage{pdfpages}
\usepackage{tikz}
\usepackage{pgfplots}
\usepackage{latexsym}
\usepackage{tikz-cd}
%\pgfplotsset{compat=1.16}
\usetikzlibrary{patterns}

\begin{document}

\title{Non-integrally closed Kronecker function rings and integral domains with a unique minimal overring}
\author{Lorenzo Guerrieri \thanks{ Jagiellonian University, Instytut Matematyki, 30-348 Krak\'{o}w \textbf{Email address:} lorenzo.guerrieri@uj.edu.pl }  \and K. Alan Loper
\thanks{ Department of Mathematics, Ohio State University – Newark, Newark, OH 43055  \textbf{Email address:}  lopera@math.ohio-state.edu }}
\maketitle

\begin{abstract}
\noindent It is well-known that an integrally closed domain $D$ can be express as the intersection of its valuation overrings but, if $D$ is not a Pr\"{u}fer domain, the most of valuation overrings of $D$ cannot be seen as localizations of $D$. The Kronecker function ring of $D$ is a classical construction of a Pr\"{u}fer domain which is an overring of $D[t]$, and its localizations at prime ideals are of the form $V(t)$ where $V$ runs through the valuation overrings of $D$.
This fact can be generalized to arbitrary integral domains by expressing them as intersections of overrings which admit a unique minimal overring. In this article we first continue the study of rings admitting a unique minimal overring extending known results obtained in the 70's and constructing examples where the integral closure is very far from being a valuation domain.
Then we extend the definition of Kronecker function ring to the non-integrally closed setting by studying intersections of Nagata rings of the form $A(t)$ for $A$ an integral domain admitting a unique minimal overring.
\medskip

\noindent MSC: 13A15, 13A18, 13B02, 13B21, 13B30, 13F05.\\
\noindent Keywords: Kronecker function ring, Nagata ring, intersection of integral domains, integral closure.
\end{abstract}

\newtheorem{theorem}{Theorem}[section]
\newtheorem{lemma}[theorem]{Lemma}
\newtheorem{prop}[theorem]{Proposition}
\newtheorem{corollary}[theorem]{Corollary}
\newtheorem{problem}[theorem]{Problem}
\newtheorem{construction}[theorem]{Construction}

\theoremstyle{definition}
\newtheorem{defi}[theorem]{Definitions}
\newtheorem{definition}[theorem]{Definition}
\newtheorem{remark}[theorem]{Remark}
\newtheorem{example}[theorem]{Example}
\newtheorem{question}[theorem]{Question}
\newtheorem{comments}[theorem]{Comments}

\newtheorem{discussion}[theorem]{Discussion}

\newcommand{\N}{\mathbb{N}}
\newcommand{\m}{\mathfrak{m}}
\newcommand{\p}{\mathfrak{p}}
\newcommand{\q}{\mathfrak{q}}
\newcommand{\Z}{\mathbb{Z}}
\newcommand{\Q}{\mathbb{Q}}
\newcommand{\al}{\boldsymbol{\alpha}}
\newcommand{\be}{\boldsymbol{\beta}}
\newcommand{\de}{\boldsymbol{\delta}}
\newcommand{\e}{\textbf{e}}
\newcommand{\om}{\boldsymbol{\omega}}
\newcommand{\g}{\boldsymbol{\gamma}}
\newcommand{\te}{\boldsymbol{\theta}}
\newcommand{\he}{\boldsymbol{\eta}}
\def\min{\mbox{\rm min}}
\def\max{\mbox{\rm max}}
\def\ff{\frak}
\def\Spec{\mbox{\rm Spec }}
\def\Zar{\mbox{\rm Zar }}
\def\Proj{\mbox{\rm Proj }}
\def\hgt{\mbox{\rm ht }}
\def\type{\mbox{ type}}
\def\Hom{\mbox{ Hom}}
\def\rank{\mbox{ rank}}
\def\Ext{\mbox{ Ext}}
\def\Ker{\mbox{ Ker}}
\def\Max{\mbox{\rm Max}}
\def\End{\mbox{\rm End}}
\def\xpd{\mbox{\rm xpd}}
\def\Ass{\mbox{\rm Ass}}
\def\emdim{\mbox{\rm emdim}}
\def\epd{\mbox{\rm epd}}
\def\repd{\mbox{\rm rpd}}
\def\ord{\mbox{\rm ord}}
\def\gcd{\mbox{\rm gcd}}
\def\Tr{\mbox{\rm Tr}}
\def\Res{\mbox{\rm Res}}
\def\Ap{\mbox{\rm Ap}}
\def\sdefect{\mbox{\rm sdefect}}
\maketitle

\section{Introduction}

Let $D$ be a local, integrally closed integral domain with maximal ideal $\m$ and quotient field $K$.
Let $\alpha \in K$ be such that $\alpha$ and $1/\alpha$ are not in $D$.  A theorem of Seidenberg \cite[Theorem 7]{seidemberg} indicates
that $\m$ extends to a nonmaximal prime ideal in the ring $D[\alpha]$ and gives details concerning the structure of the
maximal ideals of $D[\alpha]$ which contain $\m$.  Here, we only note that there are infinitely many.
Clearly, these same results hold for $D[1/\alpha]$.
The fact that $D$ is integrally closed insures that $D[\alpha] \cap D[1/\alpha] = D$.  We can think of the introduction
of $\alpha$ and $1/\alpha$ as being like striking a large crystal (the ring $D$) with a hammer and shattering
it into many smaller pieces.

%\vspace{.2in}  

Of course, the scenario described above does not quite always work.  In particular, it is impossible to 
choose the element $\alpha$ when $D$ is a valuation domain.
%\vspace{.2in}
In accordance with the depiction above, a classical theorem by Krull states that every integrally closed domain is 
the intersection of all its valuation overrings. For this reason, valuation rings are the indivisible atoms in a canonical
decomposition of an integrally closed domain.    This  makes it seem that valuation rings would be
a powerful tool in studying the structure of rings.  This is often true, but there are several things that can make it difficult.

%\vspace{.2in}  

First, note that it is hard to describe in a simple way all the valuation overrings of a given ring.  For instance, let $F$ be any field.
The ring $D = F[x,y]$ of polynomials in two variables has uncountably many valuation overrings, but only 
few of them are easy to see when looking at the localizations of $D$.   Krull provided an answer to this difficulty.  The Kronecker
function ring $Kr(D)$ of $D$ is an overring of the polynomial ring $D[t]$ with the property that that there is an easy correspondence
between the valuation overrings of $D$ and the localizations of $Kr(D)$ at prime ideals \cite{krull1}, \cite{krull2}, \cite{gilmer}.
%\vspace{.2in}
A second difficulty is harder to overcome.  If one is inclined to analyze a ring by the means of studying its valuation
overrings, it can be disconcerting to be faced with an integral domain that is not integrally closed.  Of course, one can turn
to tools of an entirely different nature.  The aim of this paper is to generalize the notions of valuation
ring and Kronecker function ring to a non-integrally closed setting.  

%\vspace{.2in}

When semistar operations were first introduced one of the major selling points was the possibility of using semistar operations
to build a Kronecker function ring for a domain which is not integrally closed \cite{fon-lop2}.  However, in this case
%The difficulty with this is that 
the Kronecker function ring that is produced 
%for a domain $D$ which is not integrally closed 
is simply a conventional Kronecker function ring of an integrally closed overring of $D$.  Our real purpose is to 
produce a non-integrally closed Kronecker function ring which mirrors the non-integrally closed character of $D$.
%\vspace{.2in}
The first step in such a process is to generalize the notion of a valuation domain.  We need two properties to hold for such a
generalization.  
\begin{itemize}
\item We need a generalized valuation ring to actually be a valuation ring if it is integrally closed.
\item We need for a domain which is not integrally closed to be equal to the intersection of the generalized
valuation rings that contain it.
\end{itemize}
%Let $D$ be a domain which is not integrally closed and has quotient field $K$.    
Suppose now $D$ to be not integrally closed.
Let $\alpha$ be an element 
of the quotient field $K$ which is not in $D$.  Let $A_\alpha$ be an overring of $D$ which is maximal with respect to the property of not
containing the element $\alpha$. Such a domain exists by application of Zorn's Lemma and it is necessarily local. We call $A_\alpha$ a \it maximal excluding domain\rm. It has been observed that maximal excluding domains are exactly the integral domains admitting a unique minimal overring in the sense of \cite{gil-hei} (see also \cite{gil-huc}). This is our proposed notion for a 
generalization of valuation rings.  

%\vspace{.2in}  
The structure of this paper is as follows.
Section 2 is devoted to review the already known facts concerning Kronecker function rings and maximal excluding domains. We provide in this section all the needed references. 

%\vspace{.2in} 
Section 3 is devoted to constructing examples of maximal excluding domains.  Almost all valuation
rings are maximal excluding (specifically all those with branched maximal ideal).  One might think that this fact, combined with the fact that we are specifically working with
rings that are not integrally closed, would mean that the integral closure of a maximal excluding domain is a
valuation ring, or at least a Pr\"ufer domain.  This is known to not be true \cite{gil-hof}.  Here we show how to 
construct maximal excluding domains using generalized power series rings.  Our construction is interesting since the maximal excluding domains constructed can be farther from being Pr\"ufer than examples that are
already in the literature.  We can indeed construct examples that have infinitely many incomparable prime ideals of the same height. 
We also construct maximal excluding domains of a more ordinary character using pullbacks.

%\vspace{.2in}   

In section 4 we work in earnest on the theory of non-integrally closed Kronecker function rings.  In the classical setting
we begin with an integrally closed domain $D$.  In this case, the key feature of $Kr(D)$ is that if we localize at any prime ideal we get
a ring of the form $V(t)$ where $V$ is a valuation overring of $D$ and $V(t)$ is what is known as the Nagata ring of $V$.
Suppose that $D$ is not integrally closed and is expressed as an intersection of maximal excluding domains.  We can define 
a new generalized version of Kronecker function ring to be the intersection of the Nagata rings of the maximal excluding domains.  We should not expect in this 
setting to always recover all the maximal excluding domains back by localizing this generalized Kronecker function ring.  For example, if the 
integral closure of $D$ is a Pr\"ufer domain then perhaps we should expect any of these generalized Kronecker function rings to be just equal to the Nagata ring $D(t)$.
We explore various settings where this happens. In particular we prove that the only Kronecker function ring of $D$ is $D(t)$ if the integral closure of $D$ is Pr\"ufer, semilocal and has sufficiently large residue fields. Along the way we study general properties about whether the operation of Nagata rings extension commutes with intersection of integral domains.

%\vspace{.2in}

Finally, in Section 5 we explore various settings where: we start with a domain $D$, which is not integrally closed, we express $D$ as an intersection of maximal excluding domains, and we intersect the rings $A(t)$ where $A$ runs through the maximal excluding rings in our collection. We first demonstrate that a localization of this intersection at a maximal ideal has the form $C(t)$ where $C$ is an overring of $D$ (providing also several example where this ring $C$ is maximal excluding). Then, we demonstrate that the integral closure of this intersection is a classical Kronecker function ring of the integral closure of $D$. We analyze also a simpler construction of the form $R = Kr(\overline{D}) \cap A(t)$ where $\overline{D}$ is the integral closure of a domain $D$ and $D = \overline{D} \cap A$ for $A$ a semilocal overring. We can study the properties of this ring $R$ more in general without requiring $A$ to be maximal excluding. To give examples and applications we involve the maximal excluding rings coming from the constructions in Section 3. Furthermore, along all the paper, we left several open questions for further research.

%\begin{enumerate}

%\item Start with a domain $D$. which is not integrally closed.

%\item Express $D$ as an intersection of maximal excluding domains.

%\item Intersect the rings $R(t)$ where $R$ runs through the maximal excluding rings in our collection.

%\item Demonstrate that a localization of this intersection has the form $R(t)$ where $R$ is a maximal excluding overring of $D$

%\item Demonstrate that the integral closure of this intersection is a classical Kronecker function ring of the integral closure of $D$.

%\end{enumerate}  

\section{Preliminaries}

We fix our notation for this article and we recall all the definitions and results that we will need about Nagata rings, Kronecker function rings, and integral domains maximal with respect to excluding a given element of their quotient field.

All the rings we consider will be integral domains, having the same unit element $1$. For an integral domain $D$, we denote its quotient field by $\mathcal{Q}(D)$. If $D$ is local, we denote its unique maximal ideal by $\m_D$. An \it overring \rm of $D$ is an integral domain $A$ such that $D \subseteq A \subseteq \mathcal{Q}(D)$. If $A$ is an overring and $A$ and $D$ are both local, we say that $A$ \it dominates \rm $D$ if $\m_A \supseteq \m_D$. The set of all valuation overrings dominating $D$ is standardly called Zar$(D)$ (the name comes from Zariski's definition). Given a valuation overring $V$ we denote by $v$ the associated valuation and by $G_V$ its value group. The integral closure of an integral domain $D$ in a field $F \supseteq \mathcal{Q}(D)$ is denoted by $\overline{D}^{F}$. If $F = \mathcal{Q}(D)$, we simply use the notation $\overline{D}$.

\subsection{Nagata rings and Kronecker function rings}

The main reference we consider for Nagata rings and Kronecker function rings is Gilmer's book \cite{gilmer}. For an historical introduction we refer to the paper \cite{historical} and to its bibliography. These subjects have been widely studied in the last 60 years. There are many other relevant references including \cite{krull1}, \cite{krull2}, \cite{nagata}, \cite{arnold}, \cite{halter-koch}, \cite{fon-lop1}, \cite{fon-lop2}.

Given an integral domain $D$ and an indeterminate $t$ over the quotient field of $D$, consider a polynomial $f(t) \in D[t]$. The content of $f$ is the ideal $c(f) \subseteq D$ generated by the coefficients of $f$.
The \it Nagata ring \rm of $D$ is defined as
$$ D(t):= \left\lbrace \frac{f}{g} \, : \, f,g \in D[t], \, c(g)=D \right\rbrace.  $$
This ring is the localization of the ring $D[t]$ at the multiplicatively closed set generated by the polynomials whose content is equal to the unit ideal.

We will use several known facts about the Nagata ring. When $\m$ is a maximal ideal of $D$, the extension $\m D(t)$ is a maximal ideal of $D(t)$, and $D(t)_{\m D(t)}= D_{\m}(t)$ (cf. \cite[Proposition 2.1]{kang}).
Therefore, since $D= \bigcap_{\m \subseteq D} D_{\m}$, we get $D(t)= \bigcap_{\m \subseteq D} D_{\m}(t).$
We can thus say that the operation of Nagata ring extension commutes with the intersection of localizations at the maximal ideals.  
By \cite[Theorem 3]{gil-hof2}, Nagata ring extension commutes also with integral closure. We have that the integral closure of $D(t)$ in its quotient field is $\overline{D}(t)$.

If $V$ is a valuation domain, the ring $V(t)$ is also a valuation domain, called the \it trivial extension \rm of $v$ to the field $\mathcal{Q}(V)(t)$. The value of a polynomial $f= \sum_{k=0}^n a_k t^k \in \mathcal{Q}(V)[t]$ with respect to this valuation is equal to $\min_{k=0,\ldots, n} \lbrace v(a_k) \rbrace$.

\medskip

Let $D$ be an integrally closed domain. By a classical theorem of Krull, $D = \bigcap_{V \in \mbox{ \footnotesize Zar}(D)} V$. 
The \it Kronecker function ring \rm of $D$ can be defined as the intersection 
$$  Kr(D)=\bigcap_{V \in \mbox{\footnotesize Zar}(D)} V(t).   $$
In the literature, the definition of Kronecker function ring is commonly given using the e.a.b star operations of the ring $D$, but for the purpose of this paper, where we study intersections of Nagata rings, we give this as an equivalent definition. It is well-known that the Kronecker function ring of $D$ is always a Bezout domain (indipendently of the properties of $D$) and its localizations at the prime ideals are all the trivial extensions of the valuation overrings of $D$. Moreover, $Kr(D) \supseteq D(t)$ and they coincide if and only if $D$ is a Pr\"{u}fer domain. Given a subset $\mathcal{F} \subseteq \Zar(D)$ such that $D = \bigcap_{V \in  \mathcal{F}} V$, one can define another ring $Kr^{\mathcal{F}}(D)=\bigcap_{V \in \mathcal{F}} V(t).$ This last ring is an overring of the Kronecker function ring $Kr(D)$. Also these overrings $Kr^{\mathcal{F}}(D)$ are commonly called Kronecker function rings of $D$.

\subsection{Integral domains maximal with respect to excluding an element of their quotient field}

In this paper we are interested in studying intersections of Nagata rings of overrings of an integral domain. For this reason we need to understand the properties of those integral domains which cannot be expressed as intersection of proper overrings. These rings have been already considered in the literature. Gilmer and Heinzer in \cite{gil-hei} consider integral domains admitting a unique minimal overring, in the sense that an integral domain $D$ has an overring $A$ such that for any other overring $B$ of $D$ there are inclusions $D \subseteq A \subseteq B$. Several properties of these domains have been studied in \cite{gil-hei}, \cite{gil-huc} and other papers, but generally these rings are still quite mysterious and difficult to identify (in the case they are not integrally closed). It is observed that an integral domain $D$ admits a unique minimal overring in the sense of Gilmer and Heinzer if and only if it is maximal with respect to the property of not containing some element $\alpha \in \mathcal{Q}(D)$ (obviously if $D$ is maximal with respect to excluding $\alpha$, then $D[\alpha]$ is the unique minimal overring of $D$). Clearly these properties are also equivalent to the fact that $D$ cannot be written as intersection of proper overrings. In particular any such a ring $D$ has to be local and its maximal ideal a $t$-ideal (see \cite{gilmer} for information about the star operation $t$). It is also easy to observe that any integral domain can be written as the intersection of its overrings that are maximal with respect to excluding some element of the quotient field.

In \cite{papick}, Papick consider a slightly weaker notion of unique minimal overring. To avoid confusion with this notion we call an integral domain maximal with respect to excluding an element of its quotient field, a \it maximal excluding domain\rm. For a survey about this topic and about the more general concept of minimal ring extensions we refer to \cite{picavet}.

If not otherwise specified, the proofs of all the results that we mention in the following paragraphs can be found in \cite{gil-hei}, \cite{gil-huc}.
The first important known fact is that an integrally closed maximal excluding domain is necessarily a valuation domain with branched maximal ideal (in particular any valuation domain of finite dimension is maximal excluding). Another relevant class of maximal excluding domains, not necessarily integrally closed, is the class of local domains such that every ideal is divisorial, for instance local Gorenstein noetherian domains of dimension one (cf. \cite{heinzer}, \cite{FGH}, \cite{warfield}).

If a domain $D$, maximal with respect to excluding an element $\alpha$, is not integrally closed, then the unique minimal overring $D[\alpha]$ is an integral extension of $D$ and is semilocal with at most two maximal ideals. Furthermore, $\m_DD[\alpha] \subseteq D$.

If $D[\alpha]$ has exactly two maximal ideals, then the integral closure of $D$ is a Pr\"{u}fer domain obtained as the intersection of two valuation rings. In this case, if $D[\alpha]= \overline{D}$, the structure of $D$ has been described in \cite[Theorem 14]{jaballah} with the use of pullback diagrams.
Also if the maximal ideal $\m_D$ of $D$ coincides with the maximal ideal of $D[\alpha]$, then the integral closure of $D$ is a valuation overring $V$ of $D$ such that $\m_V = \m_D$. In the case $D[\alpha]$ is local and its maximal ideal is strictly larger than $\m_D$, the integral closure of $D$ may not be a Pr\"{u}fer domain. The most known example of this situation is the subject of the paper \cite{gil-hof}. In this case $D$ is one-dimensional and its integral closure is a PVD but not a valuation domain (a PVD is a local domain sharing its maximal ideal with a valuation overring \cite{pvd}). A similar example appears here as Example \ref{ex3}. 

Many questions about non-integrally closed maximal excluding domains are still open. Already in \cite{jaballah}, one can find examples of maximal excluding domains whose prime ideals are not linearly ordered (but the integral closure is a Pr\"{u}fer domain). In Section 3 of this paper we show that a maximal excluding domain of dimension at least 2 can have infinitely many prime ideals of the same height and 
its integral closure may not even be a PVD 
(see Theorem \ref{wildintegralclosure}, Example \ref{ex4}).

The \it complete integral closure \rm of a domain $D$ is the ring of the elements $x \in \mathcal{Q}(D)$ for which there exists $d \in D$ such that $dx^n \in D$ for every $n \geq 1$. The complete integral closure of the example described by Gilmer and Hoffmann in \cite{gil-hof} is a valuation domain. The same happens for the classes of maximal excluding domains that we consider here in Section 3. 
We leave the following open question.

\begin{question}
Let $D$ be a maximal excluding domain. Is the complete integral closure of $D$ always a Pr\"{u}fer domain? If $D$ is one-dimensional and $\overline{D}$ is local, is $\overline{D}$ a PVD?
\end{question}

%In the next section we give positive answers for this question for some families of maximal excluding domains.

%\newpage

\section{Constructions of maximal excluding domains}

In this section we characterize maximal excluding domains that can be obtained as pullbacks and as generalized power series rings over a field, defined by submonoids of the positive part of a totally ordered abelian group. In both cases, as a consequence of Theorems \ref{pullbacksquare} and \ref{cic}, we get that the complete integral closures of the rings in these families are valuation rings. 
 
\subsection{Maximal excluding domains obtained as pullbacks}

Let $T$ be a local domain with maximal ideal $\m$ and let $B$ be an integral domain having quotient field $ \kappa:= \frac{T}{\m}.$ Let $\phi: T \to \kappa$ be the canonical surjective map. Define the ring $D:= \phi^{-1}(B)$ as in the pullback diagram:
\begin{center}
\begin{tikzcd}
   D \arrow[rightarrow]{r}\arrow[hookrightarrow]{d} 
  &  B \arrow[hookrightarrow]{d}
  \\ 
   T \arrow{r}%{\alpha} 
 &  \kappa
\end{tikzcd}
\end{center}

For exhaustive information about the properties of rings of this form the reader may consult \cite{gilmer}, \cite{Gabelli-Houston}.

\begin{theorem}
\label{pullbacksquare}
The integral domain $D$ is maximal excluding if and only if $B$ is maximal excluding and $T$ is a valution domain.
\end{theorem}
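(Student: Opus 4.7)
The plan is to exploit that $\m$ is a common ideal of $D$ and $T$: any ring $R$ with $D \subseteq R \subseteq T$ satisfies $R = \phi^{-1}(R/\m)$ and $R/\m$ is an overring of $B$ in $\kappa$. This establishes a bijection between overrings of $D$ contained in $T$ and overrings of $B$ in $\kappa$, which controls a ``lower layer'' of the overring lattice of $D$. Overrings of $D$ not contained in $T$ form an ``upper layer'' whose structure is pinned down by the valuation condition on $T$.

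For the sufficiency direction, suppose $B$ is maximal excluding with unique minimal overring $B[\beta]$ and $T$ is a valuation domain. Fix a lift $\tilde\beta \in T$ of $\beta$ and set $E = \phi^{-1}(B[\beta]) = D[\tilde\beta]$. Given any proper overring $C$ of $D$, I show $\tilde\beta \in C$. If $C \subseteq T$, then $C/\m$ is a proper overring of $B$ in $\kappa$, so by maximality of $B$ it contains $\beta$, whence $\tilde\beta \in C$. If $C \not\subseteq T$, pick $c \in C \setminus T$; since $T$ is a valuation domain, $1/c \in \m \subseteq D$, and for every $t \in T$ the product $t/c = t \cdot (1/c)$ lies in $T \cdot \m = \m \subseteq D$, so $t = (t/c) \cdot c \in D[c] \subseteq C$. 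Thus $T \subseteq C$, whence $\tilde\beta \in T \subseteq C$.

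For the necessity direction, assume $D$ is maximal excluding; I treat the nondegenerate case $B \subsetneq \kappa$, so $T \supsetneq D$ and the unique minimal overring $E = D[\alpha]$ of $D$ satisfies $E \subseteq T$, with $\alpha \in T$ and $\phi(\alpha) \in \kappa \setminus B$. That $B$ is maximal excluding follows immediately: if $B$ were the intersection of its proper overrings $\{B_i\}$ in $\kappa$, then $D = \phi^{-1}(B) = \bigcap_i \phi^{-1}(B_i)$ would express $D$ as an intersection of proper overrings, contradicting that $D$ has a unique minimal overring. To show $T$ is a valuation domain, suppose it is not: choose $y, z \in T$ with $y/z \notin T$ and $z/y \notin T$; since neither of $y, z$ can be a unit of $T$, both lie in $\m$. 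Because $D[y/z]$ is a proper overring of $D$, it contains $\alpha$, so there is a relation $\alpha z^n = \sum_{i=0}^n d_i y^i z^{n-i}$ in $T$ for some $d_i \in D$. Reducing both sides in the $\kappa$-vector space $\m^n/\m^{n+1}$ gives
\[
\phi(\alpha) \cdot \overline{z^n} \;=\; \sum_{i=0}^n \phi(d_i) \cdot \overline{y^i z^{n-i}},
\]
and, provided the classes $\overline{y^i z^{n-i}}$ are $\kappa$-linearly independent in $\m^n/\m^{n+1}$, matching the $\overline{z^n}$-coefficient forces $\phi(\alpha) = \phi(d_0) \in B$, contradicting $\phi(\alpha) \notin B$.

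The main obstacle is justifying this linear-independence step. One can arrange $\overline y, \overline z \in \m/\m^2$ to be $\kappa$-linearly independent (this is where $T$ not being a valuation domain is used), but ruling out higher-degree relations in the associated graded ring requires more care. The natural remedy is to choose $y, z$ analytically independent in $T$, or alternatively to pass to the integral closure $\overline T$ and invoke Seidenberg's theorem in the form $\overline T[y/z] \cap \overline T[z/y] = \overline T$ together with the lower-layer bijection to force $D[y/z] \cap D[z/y] = D$, contradicting the existence of a unique minimal overring of $D$.
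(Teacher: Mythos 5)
Your sufficiency direction is correct and is essentially the paper's argument (split on whether a proper overring $C$ of $D$ lies inside $T$ or not; in the latter case $1/c\in\m\subseteq D$ pulls all of $T$ into $D[c]$), and your reduction of ``$B$ maximal excluding'' to pulling back an intersection of proper overrings is exactly what the paper does. The genuine gap is the remaining claim that $T$ must be a valuation domain, and it sits exactly where you flagged it: the $\kappa$-linear independence of the classes $\overline{y^iz^{n-i}}$ in $\m^n/\m^{n+1}$ is false precisely in the case that matters. If $T$ fails to be a valuation domain only because of residue-field growth --- that is, $T$ shares its maximal ideal with a valuation overring $V$, as for $T=\mathbb{R}+x\mathbb{C}[[x]]$ --- then one may take $y=x$, $z=ix$: their classes are independent in $\m/\m^2$, yet $\overline{y^2}+\overline{z^2}=0$ already in $\m^2/\m^3$. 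Worse, here $y/z=-i$ and $z/y=i$ generate the same overring $D[i]$, so $D[y/z]\cap D[z/y]\neq D$ and the element $\alpha$ genuinely does lie in $D[y/z]$: the step you are trying to contradict is simply true for such choices of $y,z$. Your proposed remedies do not close this. Two analytically independent elements need not exist in such a $T$; and the Seidenberg route needs $y/z$ and $z/y$ both outside $\overline{T}$, which is impossible whenever $\overline{T}$ is a valuation domain --- and a non-integrally-closed $T$ with valuation integral closure (e.g.\ $K[[x^2,x^3]]$, or the Gilmer--Hoffmann example) is precisely a situation this theorem must cover. Finally, your ``lower-layer bijection'' only governs overrings of $D$ contained in $T$, whereas $D[y/z]$ is not contained in $T$, so it cannot be used to push $D[y/z]\cap D[z/y]$ down to $D$.

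The paper handles non-valuation $T$ by a different mechanism, split according to a valuation overring $V$ of $T$ with $\m\subseteq\m_V$. If some $z\in\m_V\setminus\m$ exists, then any relation $\beta=d_0+zd_1+\cdots+z^nd_n$ forces $\beta-d_0\in\m_V\cap T=\m\subseteq D$, contradicting $\beta\notin D$; so $D[z]$ is a proper overring excluding $\beta$. If instead $\m_V=\m$, the obstruction lives entirely in the residue field extension $V/\m_V\supsetneq\kappa$, and one produces $\theta=\phi'(z)$ with $\alpha'\notin B[\theta]$ by a minimal-polynomial argument: normalize $\theta$ so that the constant term of its minimal polynomial lies in $B$, and show that any expression $\alpha'=\sum b_i\theta^i$ would yield an algebraic relation for $\theta$ of lower degree. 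Your argument would need to be replaced, in this second case, by something that operates in the residue field rather than in the associated graded ring before the necessity direction can be considered complete.
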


\begin{proof}
First suppose that $B$ is not maximal excluding. Thus, there are two proper overrings $B_1, B_2$ of $B$ such that $B= B_1 \cap B_2$. It follows that $D = \phi^{-1}(B_1) \cap \phi^{-1}(B_2)$ is an intersection of two proper overrings, hence is not maximal excluding. 

Thus, suppose that $B$ is maximal excluding with unique minimal overring $B[\alpha']$. Let $\alpha$ be an element of $T$
%$F:=\mathcal{Q}(D)$
%the quotient field $F$ of $D$ 
such that $\phi(\alpha) = \alpha'.$ Clearly $\alpha$ is a unit in $T$.

Assume that $T$ is a valuation ring and pick $z \in \mathcal{Q}(D) \setminus D$. In particular $z \not \in \m$. We need to show that $\alpha \in D[z]$. 
If $z \not \in T$ then, $\frac{1}{z} \in \m \subseteq D$. It follows that $ \frac{\alpha}{z} \in \m \subseteq D $ and 
$\alpha = z \frac{\alpha}{z} \in D[z]$.
Next assume $z$ to be a unit in $T$. Therefore $\phi(z) \in \kappa$. Since $B$ is maximal excluding, we get $\alpha' \in B[\phi(z)]$. Since $\m \subseteq D$, we get $ \alpha \in \phi^{-1}(B[\phi(z)]) = D[z].  $ 

Finally consider the case where $T$ is not a valuation domain. Let $V$ be a valuation overring of $T$ such that $\m \subseteq \m_V$. By way of contradiction suppose that $D$ is maximal with respect to excluding an element $\beta$. We must have $\beta \in D[\alpha] \subseteq T$.
If there exists $z \in \m_V \setminus \m$, then we find the contradiction $\beta \not \in D[z]$. Indeed, if $\beta \in D[z]$, then there exists $d_0 \in D$ such that $\beta - d_0 = z(d_1 + zd_2 + \ldots + z^{n-1}d_n) \in \m_V \cap T = \m \subseteq D$. This is a contradiction since $\beta \not \in D$.
If instead $\m_V = \m$, let $\phi'$ be the canonical quotient map $V \to V/\m_V$. Observe that $\phi'|_T = \phi$ and the image of $\phi'$ is a proper field extension of $\kappa$.
We show that there exists an element $z \in V \setminus T$ such that $\kappa(\phi'(z)) \supsetneq \kappa$ and $\beta \not \in D[z]$. Setting $\theta:= \phi'(z)$ for some $z \in V \setminus T$, this is equivalent to showing that $\alpha' \not \in B[\theta]$. If $\theta$ is transcendental over $\kappa$, this is obvious. If it is algebraic, let us assume it to be of degree $n$. Let $f$ be the minimal polynomial of $\theta$ over $F$. Using that $\kappa$ is the quotient field of $B$, we can find $b \in B$ such that the constant term of the minimal polynomial of $b \theta$, which is equal to $b^nf(0)$, is in $B$.
Replace $\theta$ by $b\theta$ to have $f(0) \in B$. Assume there exists a relation $\alpha' = b_0 + b_1 \theta + b_2 \theta^2 + \ldots + b_{t} \theta^{t}$. For $k \geq n,$ using the relation given by $f(\theta)=0$, the term $\theta^{k}$ can be replaced by a linear combination of $\theta, \ldots, \theta^{n-1}$ with coefficient in $\kappa$ and constant term in $B$. This yields another equation of algebraic dependence of $\theta$ over $\kappa$ of degree at most $n-1$ but with constant term not in $B$. This is a contradiction.
\end{proof}

\begin{corollary}
\label{k+mcase}
With the notation of the previous theorem assume that $B$ is a field and $\kappa$ a field extension. Then $D$ is maximal excluding if and only if $T$ is a valuation domain and the extension $\kappa/B$ is a minimal field extension. 
\end{corollary}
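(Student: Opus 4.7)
The approach is to adapt the proof of Theorem \ref{pullbacksquare} to the case where $B$ is a field; the hypothesis that $B$ is ``maximal excluding'' in the sense of Theorem \ref{pullbacksquare} is incarnated, in this setting, as the minimality of $\kappa/B$. Concretely, since any subring of $\kappa$ strictly containing the field $B$ that arises from an algebraic element is itself a field, the condition ``$\kappa/B$ has no proper intermediate field'' will play the same role as ``$B[\alpha']$ is the unique minimal overring'' did in the proof of Theorem \ref{pullbacksquare}.

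For the ``if'' direction, I would assume $T$ is a valuation domain and $\kappa/B$ is a minimal field extension (so in particular $\kappa/B$ is algebraic, since any transcendental extension admits intermediate fields). Fix any $\alpha\in T$ with $\phi(\alpha)\in\kappa\setminus B$, so that $\alpha\notin D$, and verify that every $z\in\mathcal{Q}(D)\setminus D$ satisfies $\alpha\in D[z]$. If $z\notin T$, the valuation property of $T$ gives $1/z\in\m\subseteq D$, whence $\alpha/z\in T\cdot\m=\m\subseteq D$ and $\alpha=z(\alpha/z)\in D[z]$. If $z\in T\setminus D$, then $\phi(z)\in\kappa\setminus B$ and by the minimality of $\kappa/B$ one has $B[\phi(z)]=\kappa\ni\phi(\alpha)$, so there is a polynomial $p\in B[X]$ with $p(\phi(z))=\phi(\alpha)$; lifting its coefficients from $B$ to $D$ (possible since $\phi(D)=B$) produces $\widetilde{p}\in D[X]$ with $\phi(\widetilde{p}(z))=\phi(\alpha)$, whence $\alpha-\widetilde{p}(z)\in\ker\phi=\m\subseteq D$ and so $\alpha\in D[z]$. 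This exhibits $D$ as maximal with respect to excluding $\alpha$.

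For the ``only if'' direction, Theorem \ref{pullbacksquare} directly supplies that $T$ must be a valuation domain, since the argument obstructing $D$ from being maximal excluding when $T$ is not a valuation domain does not use any properties of $B$ beyond its being a subring of $\kappa$. It remains to show that $\kappa/B$ is a minimal field extension; for this I would argue contrapositively. If $F$ is a proper intermediate subring with $B\subsetneq F\subsetneq\kappa$, then $\phi^{-1}(F)$ is a proper overring of $D$ satisfying $D\subsetneq\phi^{-1}(F)\subsetneq T$, and one needs to contradict the unique minimality of an overring of $D$ by producing a second proper overring of $D$ not comparable to $\phi^{-1}(F)$. The main obstacle is this construction: when $F$ is itself a field sitting in a nontrivial chain $B\subsetneq F\subsetneq\kappa$, exhibiting an incomparable overring of $D$ requires exploiting elements of $\kappa\setminus F$ and their lifts to $T$, together with the observation that the ring generated by $D$ and such a lift has a preimage strictly spanning beyond $F$; a careful analysis of the resulting subrings of $\mathcal{Q}(D)$ and the algebraic relations between various generators of $\kappa$ over $B$ is the delicate step.
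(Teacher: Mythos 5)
Your ``if'' direction is correct and is essentially the paper's own argument: the paper proves the corollary by rerunning the proof of Theorem \ref{pullbacksquare} with $\alpha'$ taken to be an arbitrary element of $\kappa\setminus B$, and your two cases ($z\notin T$, handled via $1/z\in\m\subseteq D$, and $z\in T\setminus D$, handled via $B[\phi(z)]=\kappa$ and lifting coefficients through $\phi$) are exactly that. The same goes for deducing that $T$ must be a valuation domain. The genuine gap is precisely the step you flag as ``delicate'': producing, from a proper intermediate ring $B\subsetneq F\subsetneq\kappa$, a second proper overring of $D$ incomparable to $\phi^{-1}(F)$. This cannot be deferred, and in fact it cannot be completed by the route you describe. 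Since $\m$ is a common ideal of $D$ and of the valuation domain $T$, every overring of $D$ either contains $T$ or is of the form $\phi^{-1}(C)$ for an intermediate ring $C$ of $\kappa/B$; hence an overring incomparable to $\phi^{-1}(F)$ can only come from an intermediate ring incomparable to $F$, and such a ring need not exist. Concretely, take $B=\Q$ and $\kappa=\Q(2^{1/4})$: the only proper intermediate ring is $\Q(\sqrt{2})$, so $D=\phi^{-1}(\Q)$ has $\phi^{-1}(\Q(\sqrt{2}))$ as its unique minimal overring and is therefore maximal excluding, although $\kappa/B$ is not a minimal field extension in the usual sense of having no proper intermediate rings. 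What maximal-excludingness of $D$ actually yields, once $T$ is known to be a valuation domain, is that $B$ admits a unique minimal ring extension inside $\kappa$ --- not that $\kappa/B$ is minimal.

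Be aware that the paper's own proof does not help you here: it dismisses this direction with the single sentence that minimality of $\kappa/B$ ``is clearly necessary'' and offers no construction of the incomparable overring either. So your inability to finish is not a failure to reproduce an argument from the paper; it is a symptom that this implication, as literally stated, needs either an additional hypothesis or a weakened conclusion. For the places where the corollary is actually invoked later (Theorem \ref{wildintegralclosure}, Example \ref{ex53}) only the ``if'' direction is used, and that part of your proof is complete and correct.
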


\begin{proof}
The condition that $\kappa/B$ is a minimal field extension is clearly necessary. The other conditions can be proved exactly as in Theorem \ref{pullbacksquare} choosing $\alpha'$ to be any element of $\kappa \setminus B$.
\end{proof}

%\bigskip

%\bf add remarks on complete integral closure \rm

\subsection{Maximal excluding domains in generalized power series rings}

In the following $G$ will denote a totally ordered abelian group (with additive notation and order relation $\leq$), $\boldsymbol{0}$ will denote its unit element and $G_{\geq 0} $ the subsemigroup of elements of $G$ larger than or equal to $\boldsymbol{0}$. We consider a family of generalized power series rings, defined according to \cite{genpowerseries}. The literature about generalized power series rings is very extensive, an interested reader can consult also \cite{ribemboim}, \cite{kim1}, \cite{kim2} and several other papers by P. Ribemboim and other authors.

Let $S$ be a subsemigroup of $G_{\geq 0} $ containing $\boldsymbol{0}$. A subset $A$ of $S$ is Artinian if does not contain any infinite descending sequence of elements with respect to $\leq$ (equivalently, if $A$ admits a minimal element).
Let $K$ be any field. The generalized power series ring $[[K^S]]$ is defined formally as the set of all the maps $S \to K$ such that the set $\mbox{supp}(f):= \lbrace s \in S \, | \, f(s) \neq 0  \rbrace$ is Artinian. The operations on this ring are pointwise addition and the convolution product defined as in \cite[Section 2]{genpowerseries}. The ring $[[K^S]]$ is a commutative integral domain with unit element equal to the map $e$ such that $e(\boldsymbol{0})= 1$ and $e(s)= 0$ for $s \neq \boldsymbol{0}$.
Notice that in \cite{genpowerseries} and in the subsequent papers, the authors consider a more general case of this construction where the group $G$ is not necessarily totally ordered and $K$ is replaced by an arbitrary commutative ring $R$.
The classical semigroup ring $K[X^s, s \in S]$ can be embedded canonically in $[[K^S]]$ sending the homogeneous element $X^s$ to the map $f$ such that $f(s)=1$ and $f(g) = 0$ for every $g \neq s$. For simplicity of notation we also denote the image of this element in $[[K^S]]$ by $X^s$ and we say that these elements are \it monomial elements\rm. A general element of $[[K^S]]$ can be now expressed as a possibly infinite sum $f= \sum_{s \in S} u_s X^s $ with $u_s \in K$ and such that $\mbox{supp}(f)$ is Artinian. %has a well-defined minimal element.
The units of $[[K^S]]$ can be described using \cite[Proposition 5]{genpowerseries} (which is based on a result proved in \cite{hahn}). It turns out that $f$ is a unit if and only if $f(\boldsymbol{0})$ is a nonzero element of $K$. This fact makes $[[K^S]]$ a local domain. It is easy to observe that, if $S = G_{\geq 0}$, the ring $[[K^{G_{\geq 0}}]]$ is a valuation domain with value group $G$ and the value of an element $f \in [[K^{G_{\geq 0}}]]$ is exactly the minimal element of $\mbox{supp}(f)$. Let us set $V:= [[K^{G_{\geq 0}}]]$.

We want to describe the maximal excluding domains of the form $[[K^S]]$ with $S \subseteq G_{\geq 0}$. In the following we always assume that $[[K^S]]$ and $V$ have the same quotient field.
%which are not integrally closed. 
The first result we prove is inspired by the case $G= \mathbb{Z}$ which has been considered in \cite{FGH}. If $G= \mathbb{Z}$, $V$ is isomorphic to a standard power series ring in one variable over a the field $K$. The set $S$ is a numerical semigroup and the subring $[[K^S]]$ is maximal excluding if and only if $S$ is symmetric or pseudo-symmetric.

\begin{theorem}
\label{maxexclseries} 
Let $S$ be a proper submonoid of $G_{\geq 0} $ and let $D:= [[K^S]] \subsetneq V$. Assume that $D$ and $V$ have the same quotient field. Take $a \in G_{\geq 0} \setminus S $. The following conditions are equivalent:
\begin{enumerate}
\item[(1)] $D$ is maximal with respect to excluding the element $X^a$.
\item[(2)] For every $g \in G$, $g \neq \frac{a}{2}$, $X^g \in D$ if and only if $X^{a-g} \not \in D$.
\end{enumerate}
\end{theorem}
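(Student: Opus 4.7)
The plan is to work inside the Hahn field $\mathcal{Q}(V)=K((G))$, in which every nonzero element admits a unique representation $z=\sum_{g\in T_z}c_gX^g$ with $c_g\in K^\ast$ and $T_z\subseteq G$ Artinian (i.e.\ well-ordered), and where the valuation is $v(z)=\min T_z$. A key consequence of condition (2) that I shall use repeatedly is that $G_{>a}\subseteq S$: for $g>a$ the element $a-g$ is negative, so $X^{a-g}\notin D$, and (2) forces $X^g\in D$.

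For $(1)\Rightarrow(2)$, the ``not both in $D$'' half is immediate: if $X^g,X^{a-g}\in D$ then $X^a=X^gX^{a-g}\in D$, contradicting $a\notin S$. For the reverse direction, suppose both $X^g,X^{a-g}\notin D$ with $g\neq a/2$. Maximality yields expressions $X^a=\sum_{i=0}^nd_iX^{ig}$ and $X^a=\sum_{j=0}^me_jX^{j(a-g)}$ in $D[X^g]$ and $D[X^{a-g}]$ respectively. Comparing coefficients of $X^a$ in $\mathcal{Q}(V)$ gives $\sum_i[d_i]_{a-ig}=1$, and since $[d_i]_{a-ig}\neq0$ requires $a-ig\in S$ while the cases $i=0,1$ are forbidden by $a\notin S$ and $a-g\notin S$, there must exist $i\geq2$ with $a-ig\in S$, hence $ig\leq a$. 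A parallel argument on the second expression yields $j\geq2$ with $jg\geq(j-1)a$. Combining gives $i(j-1)a\leq ijg\leq ja$, so $i(j-1)\leq j$, which forces $i=j=2$ and $2g=a$, contradicting $g\neq a/2$. (In the extreme cases $g>a$, respectively $g<0$, the first, respectively second, of these two chains already admits no solution whatsoever, so only one of the two expressions is needed to derive the contradiction.)

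For $(2)\Rightarrow(1)$ it suffices to show $X^a\in D[z]$ for every $z\in\mathcal{Q}(V)\setminus D$. A first reduction brings us to the case $v(z)\notin S$: since $z\notin D$, the set $T_z\setminus S$ is nonempty and well-ordered, so one may set $g^\ast:=\min(T_z\setminus S)$ and $z_0:=\sum_{g\in T_z,\,g<g^\ast}c_gX^g$; the support of $z_0$ is a well-ordered subset of $S$, hence $z_0\in D$, and $z':=z-z_0\in D[z]$ satisfies $v(z')=g^\ast\notin S$. Replacing $z$ by $z'$, we may assume $g_0:=v(z)\notin S$.

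Finally I split according to whether $g_0=a/2$. If $g_0\neq a/2$, condition (2) ensures $X^{a-g_0}\in D$, so $w:=X^{a-g_0}z\in D[z]$; its support is $(a-g_0)+T_z$, with minimum $a$ (coefficient $c_{g_0}$) and all other exponents strictly exceeding $a$ and hence lying in $S$. Thus $w-c_{g_0}X^a$ has well-ordered support inside $S$, lies in $D$, and we conclude $X^a\in D[z]$. If instead $g_0=a/2$, I use $z^2\in D[z]$: its lowest exponent is $2g_0=a$ with coefficient $c_{g_0}^2$, while every other exponent (an element $g_0+g$ with $g>g_0$, or $g_1+g_2$ with $g_1,g_2>g_0$) strictly exceeds $a$, hence lies in $S$; so $z^2-c_{g_0}^2X^a\in D$ and again $X^a\in D[z]$. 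The principal technical point throughout is to justify that the ``tails'' subtracted off are honest elements of $D$: their supports are images of well-ordered sets under order-preserving shifts (or double Minkowski sums), hence well-ordered, and sit inside $G_{>a}\subseteq S$.
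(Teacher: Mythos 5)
Your proof is correct and follows the same overall architecture as the paper's (polynomial expressions from maximality for $(1)\Rightarrow(2)$; the inclusion $G_{>a}\subseteq S$, the reduction to $v(z)\notin S$, and multiplication by a suitable monomial for $(2)\Rightarrow(1)$), but it diverges at two points in ways worth recording. In $(1)\Rightarrow(2)$, where you suppose $X^g,X^{a-g}\notin D$, the paper first proves $X^a\m_V\subseteq D$, then splits into the cases $2g>a$ and $2g<a$, in each case absorbing all terms $d_iX^{ig}$ with $i\geq 2$ into $D$ so that a \emph{single} relation collapses to $X^a=d_0+d_1X^g$ and fails by inspection of supports; you instead extract the coefficient of $X^a$ from \emph{both} relations simultaneously and run the arithmetic $i(j-1)\leq j$ with $i,j\geq 2$ to force $i=j=2$ and $2g=a$. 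Both arguments are sound; yours avoids the preliminary step $X^a\m_V\subseteq D$ in this direction, at the cost of a small numerical lemma. More significantly, in $(2)\Rightarrow(1)$ the paper, after reducing to $b:=v(z)\in G_{\geq 0}\setminus S$, asserts ``by hypothesis $X^{a-b}\in D$''; this appeal to condition (2) is not available when $b=\frac{a}{2}$ (condition (2) explicitly excludes $g=\frac{a}{2}$, and indeed $X^{a/2}\notin D$ there), so the paper's proof is silent on that case. Your separate treatment of $v(z)=\frac{a}{2}$ via $z^2$, whose support has minimum exactly $a$ and all other elements in $G_{>a}\subseteq S$, cleanly covers this omission. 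Your repeated justification that the subtracted tails have well-ordered support contained in $S$ is also the right technical point to make explicit.
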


\begin{proof}
Let $v$ be the valuation associated to $V$ and call $\m_V$ the maximal ideal of $V$. Assume $D$ to be maximal with respect to excluding the element $y= X^a$.  %Since $D$ is generated by monomials $X^g$, every polynomial $\sum_{i=1}^n u_i X^{g_i}$ with $u_i \in K$ is in $D$ if and only if $ X^{g_1}, \ldots, X^{g_n} \in D $. It follows that no element of value $a$ is in $D$. 
Let us first show that $y\m_V \subseteq D$.
If there exists $z \in y\m_V \setminus D$, then by definition of maximal excluding domain $y = d_0 + d_1z + \ldots + d_n z^n $ with $d_0, \ldots, d_n \in D$. Hence $v(y-d_0) \geq v(z)> v(y)$, implying that $d_0$ is an element of $D$ of value $v(y)=a$. This is a contradiction. It follows that if $g > a$, then $X^{g} \in D$ and if $g < \boldsymbol{0}$, then $X^{a-g} \in D$.

%This implies that $X^g \in D$ for every $g > a$, otherwise one could add $X^g$ to $D$ without adding any new element of value $a$. Necessarily we cannot have $X^{\frac{a}{2}}$ in $D$, since its square is not. 
Consider now $g \in G$ such that $\boldsymbol{0} < g < a $ and $2g \neq a$.  If both $X^{g}, X^{a-g} \in D$, then $y =X^{g} X^{a-g} \in D$, contradicting the assumption. 
%If both $X^{g}, X^{a-g} \in D$, then $y \in D$, a contradiction.
Suppose both $X^{g}, X^{a-g} \not \in D$. In this case we show that one among $D[X^{g}]$ and $D[X^{a-g}]$ is an overring of $D$ not containing $y$. Suppose $y \in D[X^{g}]$. Hence
$$  X^a = d_0 + d_1 X^{g} + d_2 X^{2g} + \ldots + d_n X^{ng},  $$ with $d_0, \ldots, d_n \in D$. Using that $X^g \in D$ for every $g > a$, if $2g > a$
%Assuming the constant term of $h$ to be $1$, if $2g > a$, by multiplying $h$ and possibly replacing $d_0$ by a new $d_0' \in D$, 
the equation reduces to $ X^a = d_0 + d_1 X^g$. But this yields a contradiction since no term of $d_1$ can contain $X^{a-g}$ and no term of $d_0 $ can contain $X^{a}$. If instead $2g < a$, we obtain that $y  \not \in D[X^{a-g}]$. For this write $$  X^a = c_0 + c_1 X^{a-g} + c_2 X^{2a-2g} + \ldots + c_n X^{na-ng},  $$ with $c_0, \ldots, c_n \in D$. Now observe that $2a-2g > a$ and conclude in the same way as before.

Assume now condition $(2)$ and prove that $D$ is maximal with respect to excluding $y$. 
Using that $D$ is a subring of $V$, we know that $D$ cannot contain $X^g$ with $g < 0$. Hence $X^g \in D $ for every $g > a$. In particular $D$ contains all the elements of $V$ of value larger than $a$, thus $y\m_V \subseteq D$.  

Let $z$ be an element of the quotient field of $D$. We want to show that $y \in D[z]$. 
In the case $z \not \in V$, we get $\frac{y}{z} \in y\m_V \subseteq D$. Thus $y = z \frac{y}{z} \in D[z] $.
Otherwise, pick $z \in V \setminus D$. By subtracting elements of $D$, we reduce to the case where $v(z)= \mbox{min supp}(z)\in G_{\geq 0} \setminus S.$
 %$z=\sum_{g \in G_{\geq 0} \setminus S} u_{g} X^{g}$ with $u_g \in K$. 
 %Since we want to consider the ring $D[z]$ is sufficient to assume that all $\frac{X^{g_i}}{h} \not \in D $. Also say that $g_1 < g_i$ for every other $i$.  %Notice that
%$$ X^{a-g_1}z - \frac{u_1}{u}y =  \sum_{i=2}^n u_i X^{a - g_1 + g_i} \in D.    $$ Using that $X^{a-g_1} \in D$ we get $y \in D[z]$. 
Set $b:=v(z)$. Then $z = uX^b + h$ with $u \in K$ and $v(h)> b$.
By hypothesis $X^{a-b} \in D$, hence
$ X^{a-b}z = uy+h' \in D[z]    $. But $v(h')> (a-b)+b=a$, implying $h' \in D$. It follows that $y \in D[z]$.
%For $i > 1$, $u_i \frac{X^{a - g_1 + g_i}}{h} \in y\m_V \subseteq D$. Thus $\frac{y}{h} \in D[z]$. Let $u \in K \setminus \{0 \}$ be the constant term of $h$. Then $v(\frac{y}{u} - \frac{y}{h}) > a$, implying $\frac{y}{u} = \frac{y}{h} + (\frac{y}{u} - \frac{y}{h}) \in D[z] + y\m_V \subseteq D[z]$. Finally, $y \in D[z]$.
\end{proof}

\begin{example}
\label{ex1}
Let $G$ be a submonoid of $(\mathbb{R}, +)$. Let $a$ be a positive real number. Set $$ S= \lbrace \boldsymbol{0} \rbrace \cup \left\lbrace g \in G \, | \, \frac{a}{2} < g < a  \right\rbrace \cup \lbrace g \in G \, | \,  g > a  \rbrace. $$
Clearly $S$ is a monoid contained in $G_{\geq 0} $ and by Theorem \ref{maxexclseries}, the ring $D:= [[K^S]]$ is maximal excluding with unique minimal overring $D[X^a]$. It is easy to check that the integral closure of $D$ is $V:= [[K^{G_{\geq 0}}]]$.
\end{example}

Before presenting more examples we describe how to compute the integral closure of certain domains of the form $[[K^S]]$. This result is similar to the corresponding case for semigroup rings, where the integral closure coincides with the root closure, see \cite{rootclosure}, \cite{gradedintegral}.
For this, we need to introduce some new notation and from now on we assume $\dim(V)= n < \infty$.
Write the group $G \cong G_1 \oplus \ldots \oplus G_n$. For $i=1, \ldots, n$ define the semigroup
$$ \widehat{G_i}= \lbrace (g_1, \ldots, g_n) \in G \, | \, g_k = 0 \mbox{ for } k < i \mbox{ and } g_i > 0 \rbrace \subseteq G_{\geq 0}. $$
Set also $\widehat{G_0} = \lbrace \boldsymbol{0} \rbrace$. Observe that $G_{\geq 0}= \bigcup_{i=0}^n \widehat{G_i}$ and $ \widehat{G_i} + \widehat{G_j} \subseteq \widehat{G}_{\footnotesize\mbox{min}(i,j)} $ for any $i,j$. Denote by $\q_i$ the prime ideal of $V$ of height $i$. Then $g \in \widehat{G_i}$ if and only if $X^g \in \q_i \setminus \q_{i-1}.$
Given any submonoid $S$ of $ G_{\geq 0} $, we define $S_i := S \cap \widehat{G_i}$. Clearly $S= \bigcup_{i=0}^n S_i$.
%To do this we first observe that, analogously as semigroup rings, also these generalized power series rings are graded integral domains, according to the definition given in \cite{gradedintegral}. We recall that an integral domain $D$ with quotient field $F$ is \it root closed \rm if, whenever $x^n \in D$ for some $x \in F$, $n \geq 1$, then $x \in D$. Similarly, let $S$ be a monoid $S$ and assume that the group $\langle S \rangle$ generated by the elements of $S$ is a totally ordered abelian group $G$. Then, $S$ is \it root closed \rm if, whenever $ng \in S$ for some $g \in G$, $n \geq 1$, then $g \in S$.
%Using \cite[Theorem 2.4 and Corollary 2.5]{rootclosure}, we get that $[[K^S]]$ is root closed if and only if $S$ is root closed. \cite[Corollary 2.6]{rootclosure} is a result about semigroup rings, and shows that their integral closure coincide with the root closure.
%With the same proof, which is based on \cite[Proposition 5.4]{gradedintegral}, it is possible to prove the following:
%\begin{prop}
%\label{rootclosure1}
%Let $S$ be a proper submonoid of $G_{\geq 0} $ such that $G= \langle S \rangle$. Let $D:= [[K^S]].$
%and let $D:= [[K^S]] \subsetneq V$. Assume that $D$ and $V$ have the same quotient field. 
%Then $D$ is integrally closed if and only if $S$ is root closed.
%\end{prop}
Define also $$ \overline{S}:= \bigcup_{n \in \mathbb{N}} \lbrace g \in G \, | \, ng \in S  \rbrace.$$ 
Also $\overline{S}$ is a monoid contained in $G_{\geq 0}$.
%If $S \subseteq G_{\geq 0} $ %and $G= \langle S \rangle$ (equivalently if $D$ and $V$ have the same quotient field), then also $\overline{S} \subseteq G_{\geq 0}$ and is a monoid. %The following result follows immediately.
It is straightforward to see that
for every $i=1, \ldots, n$, $\overline{S_i}= \overline{S} \cap \widehat{G_i}$.

%\begin{corollary}
%\label{rootclosure2}
%Let $S$ be a proper submonoid of $G_{\geq 0} $ such that $G= \langle S \rangle$. Let $D:= [[K^S]].$ Then the integral closure of $D$ is $[[K^{\overline{S}}]].$ In particular the integral closure of $D$ is also local.
%\end{corollary}

\begin{prop}
\label{rootclosure2}
Let $S$ be a proper submonoid of $G_{\geq 0} $ such that $G= \langle S \rangle$. Let $D:= [[K^S]].$ Suppose that for every $i=1, \ldots, n$, there exists $a_i \in S_i$ such that $ \lbrace g \in \overline{S_i} \, | \, g \geq a_i  \rbrace \subseteq S_i. $ Suppose also that the ring $R = [[K^{\overline{S}}]]$ is integrally closed.
%$g \in S_i$ for every $g \in \overline{S}$, $g \geq a_i$.
Then $R$ is the integral closure of $D$. %is $[[K^{\overline{S}}]].$ 
%In particular the integral closure of $D$ is local.
\end{prop}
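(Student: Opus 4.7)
The plan is to prove $R = \overline{D}$ by showing both inclusions. For $\overline{D} \subseteq R$: since $G = \langle S \rangle$, the rings $D$, $R$ and $V$ all share the common quotient field $\mathcal{Q}(V)$; $R$ is integrally closed by hypothesis and $D \subseteq R$, so any element integral over $D$ is integral over $R$ and therefore lies in $R$. The real content is the reverse inclusion, i.e.\ showing that every $f \in R$ is integral over $D$.

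For this, decompose $f$ according to the partition $G_{\geq 0} = \{\boldsymbol{0}\} \sqcup \widehat{G_1} \sqcup \ldots \sqcup \widehat{G_n}$. Write $f = u_{\boldsymbol{0}} + f_1 + \ldots + f_n$, where $u_{\boldsymbol{0}} \in K \subseteq D$ is the constant coefficient of $f$ and $f_i$ is the part of $f$ supported on $\widehat{G_i} \cap \overline{S} = \overline{S_i}$. Each $f_i$ is a well-defined element of $R$ (its support is Artinian, being a subset of $\mbox{supp}(f)$). Since the integral closure of $D$ in $\mathcal{Q}(D)$ is a subring, it suffices to show that each $f_i$ is integral over $D$.

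Fix $i$ with $f_i \neq 0$ and set $g_i := v(f_i) = \min \mbox{supp}(f_i) \in \overline{S_i}$. Both $g_i$ and $a_i$ lie in $\widehat{G_i}$ and project to strictly positive elements of the rank-$1$ (hence Archimedean) factor $G_i$ in the decomposition $G \cong G_1 \oplus \ldots \oplus G_n$, so one can choose $N_i \in \N$ with $N_i g_i \geq a_i$. Since $\widehat{G_i} + \widehat{G_i} \subseteq \widehat{G_i}$, one has $\mbox{supp}(f_i^{N_i}) \subseteq \widehat{G_i}$; moreover each element of $\mbox{supp}(f_i^{N_i})$ is a sum of $N_i$ elements of $\mbox{supp}(f_i)$, each $\geq g_i$, so the whole support is bounded below by $N_i g_i \geq a_i$. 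As $f_i^{N_i} \in R$ this support also lies in $\overline{S}$; combining with the hypothesis on $a_i$ we obtain $\mbox{supp}(f_i^{N_i}) \subseteq \overline{S_i} \cap \{g \in G \mid g \geq a_i\} \subseteq S_i \subseteq S$, so $f_i^{N_i} \in D$. Then $f_i$ satisfies the monic equation $T^{N_i} - f_i^{N_i} = 0$ over $D$ and is integral over $D$, as required.

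The main obstacle, and the reason for decomposing level by level, is that a direct attack on the whole of $f$ would fail: without isolating a single convex layer $\widehat{G_i}$, the power $f^N$ picks up cross-terms from several levels, which one cannot simultaneously push above each $a_i$ with a single exponent $N$. Once one restricts to one level, the rank-$1$ Archimedean structure of $G_i$ supplies the uniform lower bound needed to bring the support of $f_i^{N_i}$ into the region $\overline{S_i} \cap \{g \in G \mid g \geq a_i\} \subseteq S_i$ controlled by the hypothesis.
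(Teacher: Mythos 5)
Your proof is correct and follows essentially the same route as the paper's: decompose $f$ into its pieces $f_i$ supported on the layers $\overline{S_i}$, use the Archimedean property of the rank-one factor $G_i$ to find an exponent with $N_i g_i \geq a_i$, and conclude $f_i^{N_i} \in D$ from the hypothesis on $a_i$. You merely spell out two points the paper leaves implicit — the inclusion $\overline{D} \subseteq R$ and the verification that $\mbox{supp}(f_i^{N_i})$ lies in $\overline{S_i} \cap \lbrace g \geq a_i \rbrace$ — which is a welcome clarification but not a different argument.
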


\begin{proof}
%We first show that $R := [[K^{\overline{S}}]]$ is integrally closed. An element of the quotient field of $R$ can be written as $f= \frac{f_1}{f_2} $ with $f_1, f_2 \in R$. We know that for $j=1,2$, $f_j = X^{g_j} u_j$ with $g_j \in \overline{S}$ and $u_j$ a unit in $R$. Therefore $f = X^g u$ with $g \in G$ and $u $ a unit in $R$. Assuming $f$ integral over $R$, for some $r_0, \ldots, r_{m-1} \in R$, we get 
%$$0= f^{m}+ r_{m-1}f^{m-1} +  \ldots + r_1f + r_0 = (X^{g}u)^m + r_{m-1}(X^{g}u)^{m-1} +  \ldots + r_1X^{g}u + r_0.  $$
%Dividing by $u^m$ we find an equation of integral dependence of $X^g$ over $R$. If $g \not \in \overline{S}$, by definition we obtain also $kg \not \in \overline{S}$ for every $k \geq 1$. In this case $X^g$ cannot be integral over $R$, since given an expression $X^{gm} + r_{m-1}X^{g(m-1)} +  \ldots + r_1X^{g} \in R$, we would need to cancel the term $X^{gm}$ with terms appearing in $r_{m-1}X^{g(m-1)} +  \ldots + r_1X^{g}$, but this would imply that some positive power of $X^{g}$ is in $R$.
%Finally for $f \in R$, 
For $f \in R$, we show that $f$ is integral over $D$. We can write $f= f_1+ \ldots + f_n$ in such a way that the support of $f_i$ is in $\overline{S_i} \cup \lbrace  \boldsymbol{0} \rbrace$. It is sufficient to show that any $f_i$ is integral over $D$. But for this, if $g_i$ is the minimum of the support of $f_i$, by the Archimedean property of real numbers one can find a positive integer $e_i$ such that $e_ig_i \geq a_i$. In this way by assumption we get $f_i^{e_i} \in D$.
\end{proof}

We can immediately describe the integral closure in the case when $V$ is one-dimensional.

\begin{corollary}
\label{onedim}
Let $S$ be a submonoid of $G_{\geq 0} $ and let $D:= [[K^S]] \subseteq V= [[K^{G_{\geq 0}}]]$. Suppose that $\dim(V)=1$, $D$ is maximal excluding and $D$ and $V$ have the same quotient field. Then $V$ is the integral closure of $D$.
\end{corollary}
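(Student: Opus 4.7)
The plan is to derive this as a direct application of Proposition \ref{rootclosure2}. Since $\dim(V)=1$, we have $n=1$, $\widehat{G_1}=G_{>\boldsymbol{0}}$, and $S_1 = S\setminus \{\boldsymbol{0}\}$, so the hypotheses of Proposition \ref{rootclosure2} reduce to a single condition involving one element $a_1$, plus the requirement that $R=[[K^{\overline{S}}]]$ be integrally closed and $G = \langle S\rangle$.

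First, since $D$ is maximal excluding and shares its quotient field with $V$, Theorem \ref{maxexclseries} applies: there is $a\in G_{\geq 0}\setminus S$ such that $X^g\in D \Leftrightarrow X^{a-g}\notin D$ for every $g\neq a/2$. The key fact I would pull out of the proof of that theorem is the inclusion $X^{a}\mathfrak{m}_V\subseteq D$, which says that every $g\in G$ with $g>a$ lies in $S$. From this I would immediately deduce two things. One, $\overline{S}=G_{\geq 0}$: given any $g\in G_{>\boldsymbol{0}}$, by the Archimedean-type property of the totally ordered abelian group (relative to the fixed element $a$) a sufficiently large integer multiple $ng$ exceeds $a$, hence $ng\in S$, so $g\in\overline{S}$. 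Two, $G=\langle S\rangle$: for any $h\in G$, pick $N\in S$ with $N>a$ and $N+h>a$, so $h=(N+h)-N$ is a difference of elements of $S$.

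Next, I would take $a_1$ to be any element of $S$ with $a_1>a$ (such elements exist since $\{g\in G:g>a\}\subseteq S$). For this choice the set $\{g\in \overline{S_1} : g\geq a_1\}$ consists of elements $g>a$ with $g>\boldsymbol{0}$, hence lies in $S_1$, which verifies the technical hypothesis of Proposition \ref{rootclosure2}. Finally, because $\overline{S}=G_{\geq 0}$, the ring $R=[[K^{\overline{S}}]]$ coincides with $V=[[K^{G_{\geq 0}}]]$, and $V$ is a valuation domain, hence automatically integrally closed. Proposition \ref{rootclosure2} then yields that $V$ is the integral closure of $D$, as required.

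The main obstacle, if any, is purely bookkeeping: extracting the inclusion $\{g: g>a\}\subseteq S$ from the maximal excluding hypothesis. This is essentially already present inside the proof of Theorem \ref{maxexclseries}, so the deduction is short once one recognizes that in the one-dimensional setting all conditions of Proposition \ref{rootclosure2} collapse to this single combinatorial fact.
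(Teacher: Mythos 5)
Your proposal is correct and follows essentially the same route as the paper: both arguments extract from Theorem \ref{maxexclseries} that every $g>a$ lies in $S$, use the rank-one (Archimedean) structure of $G$ to conclude $\overline{S}=G_{\geq 0}$, and then invoke Proposition \ref{rootclosure2}. You are somewhat more careful than the paper in explicitly checking the auxiliary hypotheses ($G=\langle S\rangle$, the choice of $a_1$, and the integral closedness of $R=V$), but this is the same proof.
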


\begin{proof}
In this case the group $G$ is a submonoid of $(\mathbb{R}, +)$.
 By Theorem \ref{maxexclseries}, there exists $a \in G_{\geq 0}$ such that $g \in S$, for every $g > a$. 
 Given $s \in G_{\geq 0}$, %The Archimedean property of real numbers implies that 
 there exists some $n \in \mathbb{N}$ such that $ns > a$. Hence, $\overline{S} = G_{\geq 0}$ and $V$ is the integral closure of $D$ by Proposition \ref{rootclosure2}.
\end{proof}

In the next two examples the valuation ring $V$ has dimension 2.

\begin{example}
\label{ex2}
Let $G= \Z \times \Z$, ordered by lexicographic order, and let $ V= [[K^{G_{\geq 0}}]]. $ The ring $V$ is a discrete valuation ring of rank 2. Call $X$ the monomial element of $V$ of value $(0,1)$ and $Y$ the monomial element of $V$ of value $(1,0)$. Consider the submonoid $$ S= \left\lbrace (0,n) \, | \, n \geq 0   \right\rbrace \cup \left\lbrace (1,n) \, | \, n \geq 1   \right\rbrace \cup \left\lbrace (m,n) \, | \, m \geq 2, n \in \Z   \right\rbrace \subseteq G_{\geq 0}. $$ By Theorem \ref{maxexclseries} the ring $D= [[K^S]]= K[[Y\m_V, X]]$ is maximal excluding with unique minimal overring $D[Y]$. The integral closure is clearly $V$ by Proposition \ref{rootclosure2}, since $2(1,n) \in S$ for every $n \in \Z$.
By an analogous argument also $D[Y]$ is maximal excluding with unique minimal overring $D[\frac{Y}{X}]$. Iterating, we observe that there exists an infinite ascending chain 
$$  D \subseteq D[Y] \subseteq D\left[\frac{Y}{X}\right] \subseteq D\left[\frac{Y}{X^2}\right] \subseteq \ldots \subseteq \bigcup_{n=0}^{\infty}D\left[\frac{Y}{X^n}\right] =  V,  $$ such that each ring is maximal excluding, the next one is the unique minimal overring, and the integral closure of all these ring is $V$.
In this example, the ring $D$ has dimension 2 and two nonzero prime ideals. The maximal ideal $\m$ is generated by $X$ and $YX$, the height one prime is $P= \p \cap D$ where $\p$ is the height one prime of $V$. The quotient $D/P$ is a DVR with maximal ideal generated by the image of $X$. The localization $D_P $ coincides with the localization $V_{\p}$. The quotient ring $D/(X)$ is an Artinian ring having dimension 2 as $K$-vector space. %\bf are all the ideals/homogeneous ideals of $D$ divisorial? \rm
\end{example}

\begin{example}
\label{ex3}
This example is a sort of dual of the previous one. Again let
$G= \Z \times \Z$, ordered by lexicographic order, let $ V= [[K^{G_{\geq 0}}]] $ and define $X, Y$ as in Example \ref{ex2}. Set $$ S= \left\lbrace (0,0)   \right\rbrace \cup \left\lbrace (1,n) \, | \, n \in \Z \setminus \lbrace 0 \rbrace   \right\rbrace \cup \left\lbrace (m,n) \, | \, m \geq 2, n \in \Z   \right\rbrace \subseteq G_{\geq 0}. $$ It is easy to check that $S$ is a monoid.
 The ring $D= [[K^S]]= K[[Y\m_V, \frac{Y}{X}, \frac{Y}{X^2}, \ldots]]$ is maximal excluding by Theorem \ref{maxexclseries} and the unique minimal overring is $D[Y]$. We notice that this ring $D[Y]$ is equal to $K + YK((X))[[Y]]$, thus it is an integrally closed PVD but not a valuation domain ($X, X^{-1} \not \in D[Y]$).
 By Proposition \ref{rootclosure2}, we observe that $D[Y]$ is the integral closure of $D$. Indeed, $Y$ is clearly integral over $D$ while all the powers of $X$ are not integral since for every $m, n \in \N$, $m(0, n) = (0, mn) \not \in S$. %We show that $D[Y]$ is a PVD but not a valuation domain. To see that it is not a valuation domain it suffices to observe that $X, X^{-1} \not \in D[Y]$. 
 Let $W$ be the rank one valuation overring of $V$. The maximal ideal of $W$ coincides with the height one prime $\p$ of $V$ and is also equal to the maximal ideal $\m$ of $D[Y]$. %is generated by $Y$. 
 %Let us prove that $\p$ is equal to the maximal ideal $\m$ of $D[Y]$.
 %First notice that the monomial elements of $\p$ are the same as the monomial elements of $\m$. An arbitrary element of $\p$ can be expressed as the product of a monomial element of $\p$ and a unit of $V$. Therefore it is a sum of monomial elements of $\p$. This shows $\p= \m$ and proves that $D[Y]$ is a PVD. 
 Hence, in this example $D$ is one-dimensional and its complete integral closure $W$ is a valuation domain.
%\bf are all the ideals/homogeneous ideals of $D$ divisorial? \rm
\end{example}

\begin{remark}
\label{remarkpullback}
Consider the same notation used above and assume $V$ %to have finite dimension. 
countably many prime ideals.
In the next part of this section we can always reduce to assume that if $D$ is maximal excluding with unique minimal overring $D[Y]$, then $Y$ is an element of the height one prime of $V$. Indeed, given any non-maximal prime ideal $\q$ of $V$, it is well-known that $V$ is the pullback of the valuation ring $V/\q$ with respect to the quotient map from $V_{\q}$ to its residue field. The ring $V/\q$ is maximal excluding, hence if the ideal $YV$ has height $i > 1$, we can choose $\q$ to be the prime ideal of height $i-1$ and apply Theorem \ref{pullbacksquare}, reducing to the case where the height of $YV$ is $1$.
\end{remark}

In all the above examples, the complete integral closure of $D$ is the rank one valuation overring of $V$. We prove now that this happens in general for all the maximal excluding rings in this family, provided that they share the same quotient field with $V$ and $V$ has countably many prime ideals.

\begin{theorem}
\label{cic}
Let $S$ be a submonoid of $G_{\geq 0} $ and let $D:= [[K^S]] \subseteq V= [[K^{G_{\geq 0}}]]$. Suppose that $D$ is maximal excluding, $D$ and $V$ have the same quotient field, and $V$ has countably many prime ideals. Then the rank one valuation overring $W$ of $V$ is the complete integral closure of $D$.
\end{theorem}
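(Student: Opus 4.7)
The plan is to prove the two inclusions $W \subseteq D^{*}$ and $D^{*} \subseteq W$, where $D^{*}$ denotes the complete integral closure of $D$. By Remark \ref{remarkpullback}, which uses the countability of $\Spec V$, I may assume that $v(Y)$ lies in $\p$, the height one prime of $V$; let $H$ be the largest proper convex subgroup of $G$, corresponding to $\p$. Then $W = V_{\p}$, and $w \in W$ if and only if $v(w) \geq -h$ for some $h \in H_{\geq 0}$. Throughout I will use the containment $Y \m_{V} \subseteq D$ from Theorem \ref{maxexclseries}; in particular $Y^{2} \in Y \m_{V} \subseteq D$ because $v(Y) > 0$.

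For $W \subseteq D^{*}$, I will show that the single element $d = Y^{2} \in D$ serves as an almost-integrality witness for every $w \in W$. Given $w \in W$ with $v(w) \geq -h$ for some $h \in H_{\geq 0}$ and any $n \geq 1$, I compute
\[ v(Y^{2} w^{n}) = 2 v(Y) + n v(w) \geq 2 v(Y) - n h. \]
The key inequality $2 v(Y) - n h > v(Y)$, equivalent to $v(Y) > n h$, holds uniformly in $n$ because $v(Y) > H$ while $n h \in H$. Hence $Y^{2} w^{n} / Y \in \m_{V}$, so $Y^{2} w^{n} \in Y \m_{V} \subseteq D$, proving $w \in D^{*}$.

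For $D^{*} \subseteq W$, I take $w \in D^{*} \setminus \{0\}$ together with a witness $d \in D \setminus \{0\}$ satisfying $v(d) \geq n(-v(w))$ for every $n \geq 1$, and argue by contradiction. If $w \notin W$, then $-v(w)$ is strictly larger than every element of $H$ and does not lie in $H$. Because $H$ is the \emph{largest} proper convex subgroup of $G$ (and convex subgroups of a totally ordered group form a chain), the convex subgroup generated by $-v(w)$ must equal $G$. Consequently there exists $m \in \N$ with $v(d) \leq m(-v(w))$; applying the hypothesis with $n = m+1$ yields $v(d) \geq (m+1)(-v(w)) > m(-v(w)) \geq v(d)$, a contradiction.

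The main obstacles I anticipate are, first, invoking the countable-rank hypothesis to legitimize the reduction of Remark \ref{remarkpullback} so that $v(Y)$ sits in the height one prime, and second, selecting the right power of $Y$ as the witness. The naive choice $d = Y$ fails on units of $W$ whose $V$-valuation equals $0$, because $Y w^{n}$ then has value exactly $v(Y)$ and need not lie in $Y \m_{V}$; the quadratic witness $d = Y^{2}$ pushes the value strictly past $v(Y)$ and makes both cases of the argument uniform. The remaining work is bookkeeping with the convex-subgroup structure of $G$.
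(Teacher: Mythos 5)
Your core computation is sound and is essentially the paper's argument specialized to the case $v(Y)\in\p$: the paper likewise produces a single element of $Y\m_V\subseteq D$ whose value is strictly larger than $v(Y)$ in the $G_W$-coordinate and runs the same two cases ($h\in V$ versus $h\in W\setminus V$); your $D^{*}\subseteq W$ direction is a correct, more explicit version of the paper's one-line observation that $W$ is completely integrally closed. The genuine gap is the opening reduction. Remark \ref{remarkpullback} only says that a maximal excluding $[[K^S]]$ whose excluded element generates an ideal of height $i>1$ can be realized as a pullback $\phi^{-1}(B)$ along $V_{\q_{i-1}}\to V_{\q_{i-1}}/\q_{i-1}$ with $B\subseteq V/\q_{i-1}$ maximal excluding; it does not transport the complete-integral-closure statement. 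After passing to $B$ the relevant rank-one valuation overring is that of $V/\q_{i-1}$, namely $V_{\q_i}/\q_{i-1}$, which is not the image of $W=V_{\q_1}$, so even granting the theorem for $B$ you would still have to relate $D^{*}$ to $B^{*}$ across the pullback, and you do not. Nor is the reduction cosmetic for your argument: if $v(Y)$ has first coordinate $0$, then $v(Y^{2}w^{n})=2v(Y)-nh$ eventually becomes negative for $w\in W\setminus V$, so the witness $Y^{2}$ genuinely fails in exactly the case you are excising.

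The repair is short, and it is instructive that the paper avoids the reduction altogether. Either imitate the paper and take as witness any $f\in\p$ with $v(f)$ having first ($G_W$-)coordinate strictly larger than that of $v(Y)$: such $f$ lies in $Y\m_V\subseteq D$ and your two-case estimate goes through verbatim whether or not $Y\in\p$. Or handle the excluded case directly: if $\hgt(YV)=i>1$, condition (2) of Theorem \ref{maxexclseries} forces $X^{g}\in D$ for every $g$ of height less than $i$ (since $v(Y)-g$ is then negative), and every series in $\q_{i-1}$ is supported on such $g$, so $\p\subseteq\q_{i-1}\subseteq D$ and any nonzero $d\in\p$ satisfies $dw^{n}\in\p V_{\p}=\p\subseteq D$ for all $w\in W$ and all $n$. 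With either patch your proof is complete.
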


\begin{proof}
The ring $W$ is completely integrally closed. Hence, it is sufficient to show that every element of $W$ is almost integral over $D$. If $\dim(V)=1$, then $V=W$ and the result follows by Corollary \ref{onedim}. Assume $\dim(V) \geq 2$ and let $\p$ be the height one prime ideal of $V$, which coincides with the maximal ideal of $W$. We can write $G \cong G_W \oplus G_{V/\p}$ where $G_W$ is the rank one value group associated to $W$ and $G_{V/\p}$ is the value group of the valuation ring $V/\p$. 

Say that the unique minimal overring of $D$ is $D[y]$ with $y \in \m_V$. Let $v(y)= (a_1, a_2)$ with $a_1 \geq 0$.
Choose $f \in \p$ such that $v(f) = (s_1, s_2) $ with $s_1 > a_1$. In particular $v(f) > v(y)$. 
Since by Theorem \ref{maxexclseries}, $y\m_V \subseteq D$ we get $f \in D$. 
Pick $h \in W$. If $h \in V$, then for every $n \geq 1$, $v(h^nf) \geq v(f) > v(y) $. If $h \not \in V$, then $v(h)= (0, -g)$, for $g$ a positive element of $G_{V/\p}$. %The value of $f$ is $(s_1, s_2)$ with $s_1 > 0_{G_W}$. The value of $y$ is $(a_1, a_2)$ with $a_1 < s_1$.
Thus $v(h^nf) = (s_1, -ng+s_2) > (a_1, a_2) = v(y)$. In both cases this implies $h^nf \in D$.
\end{proof}

We now analyze further the integral closure of $[[K^S]]$ to show that the integral closure of a maximal excluding domain can be very far from being Pr\"{u}fer. We show that it does not even need to be a PVD and can have infinitely many incomparable prime ideals. For this, we %need to introduce some new notation and from now on we 
assume $1 < \dim(V)= n < \infty$. 
In the following, we let the semigroups $\widehat{G_i}$ and $S_i$ for $i=1, \ldots, n$ be defined as above in this section.
%Write the group $G \cong G_1 \oplus \ldots \oplus G_n$. For $i=1, \ldots, n$ define the semigroup
%$$ \widehat{G_i}= \lbrace (g_1, \ldots, g_n) \in G \, | \, g_k = 0 \mbox{ for } k < i \mbox{ and } g_i > 0 \rbrace \subseteq G_{\geq 0}. $$
%Set also $\widehat{G_0} = \lbrace \boldsymbol{0} \rbrace$. Observe that $G_{\geq 0}= \bigcup_{i=0}^n \widehat{G_i}$ and $ \widehat{G_i} + \widehat{G_j} \subseteq \widehat{G}_{\footnotesize\mbox{min}(i,j)} $ for any $i,j$. Denote by $\q_i$ the prime ideal of $V$ of height $i$. Then $g \in \widehat{G_i}$ if and only if $X^g \in \q_i \setminus \q_{i-1}.$
%Given any submonoid $S$ of $ G_{\geq 0} $, we define $S_i := S \cap \widehat{G_i}$. Clearly $S= \bigcup_{i=0}^n S_i$.
We show how given arbitrary $S_2, \ldots, S_n$ we can construct a monoid $S$ such that $D= [[K^S]]$ is maximal excluding.
By Remark \ref{remarkpullback} we can restrict to consider only the case where the excluded element is in the height one prime of $V$.

\begin{lemma}
\label{importantlemma}
Let $G\cong G_1 \oplus \ldots \oplus G_n$ be a totally ordered abelian group of rank $n > 1$. For each $i=2, \ldots, n$, fix a (possibly empty) semigroup $S_i \subseteq \widehat{G_i}$. Suppose that $ S_i + S_j \subseteq S_{\footnotesize\min(i,j)} $ for any $i, j = 2, \ldots, n$.
Also set $S_0:= \widehat{G_0} = \lbrace \boldsymbol{0} \rbrace$.
Then, it is possible to construct $S_1 \subseteq \widehat{G_1}$ such that $S := \bigcup_{i=0}^n S_i$ is a monoid and the ring $D= [[K^S]]$ is maximal excluding. Moreover, $D$ has the same quotient field of $V= [[K^{G_{\geq 0}}]]$.
\end{lemma}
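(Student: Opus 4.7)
The plan is to fix an excluded element $a \in \widehat{G_1}$ and construct $S_1$ as a suitably corrected ``half'' of $\widehat{G_1}$ under the involution $g \mapsto a - g$, adjusting for the constraints imposed by the given semigroups $S_2, \ldots, S_n$. Concretely, I would fix any $a \in \widehat{G_1}$ (for instance $a = (a_1, 0, \ldots, 0)$ with $a_1 > 0$) and set
$$S_1 := \bigl\{g \in \widehat{G_1} \setminus \{a\} : 2g > a\bigr\} \setminus \bigl\{a - h : h \in S_2 \cup \cdots \cup S_n\bigr\}.$$
The first factor selects exactly one element of each pair $\{g, a - g\} \subseteq \widehat{G_1}$ via the identity $2g > a \Leftrightarrow 2(a - g) < a$ (valid when $g \neq a/2$). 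The subtraction corrects for elements $a - h$ with $h \in \widehat{G_j}$, $j \geq 2$: these lie in $\widehat{G_1}$ since $(a - h)_1 = a_1 > 0$, and the condition of Theorem \ref{maxexclseries} forces $a - h \in S_1 \Leftrightarrow h \notin S_j$.

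To verify the hypothesis of Theorem \ref{maxexclseries}, I would case-split on the location of $g \in G$ (with $g \neq a/2$): if $g \in \widehat{G_j}$ for some $j \geq 2$, the subtraction clause gives the required equivalence $g \in S_j \Leftrightarrow a - g \notin S_1$; if $g \in \widehat{G_1}$ with $g_1 < a_1$, then $a - g \in \widehat{G_1}$ as well and the $2g > a$ clause selects exactly one element of the pair; if $g \in \widehat{G_1}$ with $g_1 = a_1$, $g \neq a$, and $g < a$, then $a - g$ lies in some $\widehat{G_{j'}}$ with $j' \geq 2$ and the subtraction again does the job; and the cases $g > a$ or $g < \boldsymbol{0}$ are trivial because one of $X^g, X^{a - g}$ lies outside the valuation ring $V$ and hence is automatically absent from $D$.

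The main obstacle is verifying that $S = \{\boldsymbol{0}\} \cup S_1 \cup S_2 \cup \cdots \cup S_n$ is closed under addition. The inclusions $S_i + S_j \subseteq S_{\min(i, j)}$ for $i, j \geq 2$ hold by hypothesis, so only $S_1 + S_1 \subseteq S_1$ and $S_1 + S_j \subseteq S_1$ for $j \geq 2$ require work. Given $g \in S_1$ and a nonzero $h \in S$, set $k := a - (g + h)$; a direct lex computation gives $g + h \in \widehat{G_1}$ and $2(g + h) > a$. The crucial step is $k \notin \bigcup_{j'' \geq 2} S_{j''}$: when $k_1 \neq 0$ this is immediate, and in the remaining subcase (essentially $g_1 = a_1$ with $h \in S_j$, $j \geq 2$, so that $k_1 = 0$ and $k$ may lie in some $\widehat{G_p}$), one argues by contradiction: if $k \in S_p$, then the hypothesis $S_p + S_j \subseteq S_{\min(p, j)}$ gives $k + h = a - g \in S_{\min(p, j)}$, contradicting the fact that $a - g \notin S_{j'}$ for any $j' \geq 2$---which is precisely what the subtraction clause of the definition of $S_1$ guarantees whenever $g \in S_1$. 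Finally, $S$ generates $G$ as an abelian group because $S_1 \supseteq \{g \in \widehat{G_1} : g > a\}$ and differences of such elements exhaust $G$, which yields $\mathcal{Q}(D) = \mathcal{Q}(V)$.
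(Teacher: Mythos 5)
Your proposal is correct and follows essentially the same route as the paper: fix $a \in \widehat{G_1}$, take all of $\{g \in \widehat{G_1} : g > a\}$, correct the slice $g_1 = a_1$, $g < a$ by removing the elements $a - h$ with $h \in S_2 \cup \cdots \cup S_n$, verify closure under addition via the same contradiction with $S_p + S_j \subseteq S_{\min(p,j)}$, and invoke Theorem \ref{maxexclseries}. The only difference is that where the paper leaves the piece $S_1^*$ (the elements with $0 < g_1 < a_1$) abstract, subject to three conditions, you make the concrete choice $2g > a$, which is exactly the instance the paper itself suggests in Remark \ref{remS1} and which one checks satisfies those conditions.
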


\begin{proof}
Fix $a= (a_1, \ldots, a_n) \in \widehat{G_1}$ (in particular $a_1 > 0$).
Define $S_1 = S_1' \cup S_1'' \cup S_1^*$ where $$ S_1' = \lbrace g \in \widehat{G_1} \, | \, g > a \rbrace, \quad S_1'' = \lbrace g \in \widehat{G_1} \, | \, g < a, \, g_1= a_1, \mbox{ and } a-g \not \in S_2 \cup \ldots \cup S_n \rbrace, $$ and $S_1^* \subseteq \lbrace g \in \widehat{G_1} \, | \, g_1 < a_1 \rbrace$ is empty if $\lbrace g \in \widehat{G_1} \, | \, g_1 < a_1 \rbrace = \emptyset$ or otherwise is constructed in such a way that:
\begin{itemize}
\item $g \in S_1^*$ if and only if $a-g \not \in S_1^*$.
\item $S_1^* + S_1^* \subseteq S_1$.
\item $S_j + S_1^* \subseteq S_1^*$ for every $j=2, \ldots, n$.
\end{itemize}  
To get some ideas on how one can concretely construct such $S_1^*$ see Remark \ref{remS1}.
We prove now that $S$ is a monoid. 

We have to show that for every $j=1, \ldots, n$, the set $S_j + S_1 \subseteq S_1$. Clearly $S_j + S_1' \subseteq S_1'$ for every $j$. Also $(S_1''+ S_1'') \cup (S_1''+ S_1^*) \subseteq S_1'$. By the hypothesis on $S_1^*$, to conclude we only need to check the inclusion $S_j + S_1'' \subseteq S_1$ for $j \geq 2$. 
Pick $g = (0, \ldots, 0, g_j, \ldots, g_n) \in S_j$ and $h=(a_1, h_2, \ldots, h_n) \in S_1''$. %By definition of $S_1''$, $h < a$, thus there exists $k \leq n$ such that $h_k < a_k$ and $h_i = a_i$ for every $i<k$. If $k > j$, since $g_j >0$ we get $$g+h = (a_1, a_2, \ldots, a_{j-1}, a_j+g_j, g_{j+1}+ h_{j+1}, \ldots, g_n +h_n) > a.$$ Hence $g+h \in S_1' \subseteq S_1$. We obtain the same result if $k=j$ and $g_j + h_j > a_j$. 
%Consider the opposite case when either $k < j$ or $k=j$ and $g_j + h_j \leq a_j$ 
If $g+h > a$, then $g+h \in S_1' \subseteq S_1$. We cannot have $g+h=a$ since this would contradict the definition of $S_1''$. Hence, suppose $g+h < a$ and let us show that $g+h \in S_1''$.
%For $i < j$, $a_i - g_i- h_i $
If by way of contradiction $a-(g+h) \in S_2 \cup \ldots \cup S_n$, we would have $$a-h= g + (a-g-h) \in S_j + (S_2 \cup \ldots \cup S_n) \subseteq (S_2 \cup \ldots \cup S_n). $$ Also this contradicts the definition of $S_1''$ and shows that $S$ is a monoid.

Set $D: = [[K^S]]$. By construction $X^{a^2}, X^{a^3}$ are in $D$, hence $X^{-a}$ is in the quotient field of $D$. For every $g \in G_{\geq 0}$, $X^{g} = X^{a+g} X^{-a}$. This implies that $D$ has the same quotient field as $V$.
Using Theorem \ref{maxexclseries}, it is straightforward to check that $D$ is maximal with respect to excluding the element $X^a$.
%$$ a-(g+h) = (0, a_2-g_2-h_2, \ldots, a_n-g_n-h_n) $$
\end{proof}

\begin{remark}
\label{remS1}
For the purpose of constructing examples, the conditions defining the set $S_1^*$ in the above proof are not very explicit. A good way to satisfy the third condition is assuming that, if $g_1 >0$, for every $g_2, \ldots, g_n$, the element $g=(g_1, g_2, \ldots, g_n) \in S_1^*$ if and only if $(g_1, 0, \ldots, 0) \in S_1^*$. For the first two conditions, one can choose a set such that the projection on the first component behaves as the set described in Example \ref{ex1}. An easier assumption, which can still produce many nice examples, is the following: set $G_1 = \Z$ and $a=(1, 0, \ldots, 0)$. In this way the set $S_1^* = \emptyset$ and the proof of Lemma \ref{importantlemma} can be simplified. However, there are other possible choices to construct a set $S_1^*$ satisfying the required conditions.
\end{remark}

%Before proving the main theorem concerning the rings which occur as integral closure of a maximal excluding domain $D= [[K^S]]$, we need another easy lemma.
We now prove the main theorem concerning the rings which occur as integral closure of a maximal excluding domain of the form $D= [[K^S]]$. 

%\begin{lemma}
%\label{easylemma}
%Let $G\cong G_1 \oplus \ldots \oplus G_n$ be a totally ordered abelian group of rank $n > 1$. Let $S$ be a submonoid of $ G_{\geq 0} $ and let $\overline{S}$ be its root closure. Then for every $i=1, \ldots, n$, $\overline{S_i}= \overline{S} \cap \widehat{G_i}$.
%\end{lemma}

%\begin{proof}
%Since $\widehat{G_i}$ is clearly root closed, the inclusion $\overline{S_i} \subseteq \overline{S} \cap \widehat{G_i}$ is trivial. Conversely, pick $g \in \widehat{G_i}$ such that $ng \in S$ for some $n \geq 1$. Then $ng \in \widehat{G_i} \cap S = S_i$. Therefore $g \in \overline{S_i}$.
%\end{proof}

The notation for the next theorem is slightly different from the one used until now. Let $n >1$ and let $G' \cong G_2 \oplus \ldots \oplus G_n$ be a totally ordered abelian group.
Set $V':=[[K^{G'_{\geq 0}}]]$.
%Let $H$ be a submonoid of $G'_{\geq 0}$, let $A =[[K^H]] \subseteq V'. $ We do not require $A$ and $V'$ to have the same quotient field but we assume $A$ to be integrally closed in its quotient field.
Let $H$ be any submonoid of $G'_{\geq 0}$ such that $\overline{H}=H$ and suppose that $A =[[K^H]] \subseteq V' $ is integrally closed. We do not require $A$ and $V'$ to have the same quotient field. Given another totally ordered abelian group $G_1$ of rank one, set $G:= G_1 \oplus G'. $
Define $V:=[[K^{G_{\geq 0}}]]$. Let $W$ be the rank one valuation overring of $V$ and $\kappa$ be the residue field of $W$.

\begin{theorem}
\label{wildintegralclosure} 
With the notation and the assumptions stated above, there exists a submonoid $S$ of $G_{\geq 0}$ such that: 
\begin{enumerate}
\item[(i)] The ring $D =[[K^S]] \subseteq V $ is maximal excluding and has the same quotient field as $V$.
\item[(ii)] The integral closure $\overline{D} $ of $D$ occurs as the pullback in the following diagram:
\begin{center}
\begin{tikzcd}
  \overline{D} \arrow[rightarrow]{r}\arrow[hookrightarrow]{d} 
  &  A \arrow[hookrightarrow]{d}
  \\ 
   W \arrow{r}%{\alpha} 
 &  \kappa
\end{tikzcd}
\end{center}
%\item[(iii)]
\end{enumerate}
Conversely, if $D=[[K^{S'}]]\subseteq V $ is a maximal excluding domain having the same quotient field as $V$, then the integral closure of $D$ is the pullback of some integrally closed local domain $A'$ with respect to the quotient map $W \to \kappa$.
\end{theorem}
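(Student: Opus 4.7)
For the forward direction, I would apply Lemma \ref{importantlemma} with $S_i := H_i := H \cap \widehat{G_i}$ for $i \geq 2$ (using the identification of $G'$ with the subgroup of $G$ having trivial first coordinate). This immediately produces $S \subseteq G_{\geq 0}$ and the maximal excluding domain $D = [[K^S]]$ with the same quotient field as $V$, giving (i). For (ii), I would compute $\overline{S}$: since $G_1$ has rank one (hence is archimedean), every $g \in \widehat{G_1}$ satisfies $ng > a$ for some $n$, yielding $\overline{S_1} = \widehat{G_1}$; while $\overline{H} = H$ gives $\overline{S_i} = H_i$ for $i \geq 2$. Thus $\overline{S} = \widehat{G_1} \cup H$, and the hypotheses of Proposition \ref{rootclosure2} on the $a_i$ are easy to meet (take $a_1 = 2a$ and any $a_i \in H_i$ when nonempty). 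The remaining hypothesis, that $R := [[K^{\overline{S}}]]$ is integrally closed, I would verify by directly identifying $R$ with the pullback $\phi^{-1}(A)$: a Hahn series $f$ with support in $\overline{S} = \widehat{G_1} \cup H$ decomposes by support as $f_{\widehat{G_1}} + f_H$, so $\phi(f) = f_H \in A$; conversely, every $z \in W$ with $\phi(z) \in A$ admits such a decomposition. Since the pullback of an integrally closed subring of the residue field of a valuation domain is integrally closed, $R$ is integrally closed, and Proposition \ref{rootclosure2} gives $\overline{D} = R = \phi^{-1}(A)$, which is exactly diagram (ii).

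For the converse, I would begin by reducing via Theorem \ref{maxexclseries} and Remark \ref{remarkpullback} to the case where the excluded element is $y = X^a$ with $a \in \widehat{G_1}$, so that $\{g \in \widehat{G_1} \mid g > a\} \subseteq S'$. The key step is showing $\m_W \subseteq \overline{D}$: for any $z \in \m_W$, the first coordinate of $v(z)$ is positive, so the archimedean property of $G_1$ yields $n$ with $nv(z) > a$; the support of $z^n$ then lies in $\widehat{G_1}$ above $a$, hence inside $S'$, so $z^n \in D$ and $z$ is integral over $D$. I would next define $A' := \phi(\overline{D}) \subseteq \kappa$. The identity $\overline{D} = \phi^{-1}(A')$ is immediate: the inclusion $\subseteq$ is tautological, and for the reverse any $z \in \phi^{-1}(A')$ differs from some $z_0 \in \overline{D}$ by an element of $\m_W \subseteq \overline{D}$. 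Integral closedness of $A'$ in $\kappa$ follows from the integral closedness of $\overline{D}$: given $\alpha \in \kappa$ integral over $A'$, lift $\alpha$ to $z_0 = c/d \in W$ via the embedding $V' \hookrightarrow V$, lift the integral equation to $P(T) \in \overline{D}[T]$, and note that $P(z_0) \in \ker\phi \cap W = \m_W \subseteq \overline{D}$, so $z_0$ satisfies a monic equation over $\overline{D}$; integral closedness of $\overline{D}$ then yields $z_0 \in \overline{D}$ and hence $\alpha \in A'$. Finally, $A'$ is local because $\overline{D}$ is: the overring $D[X^a]$ is the generalized power series ring $[[K^{S' \cup \{a\}}]]$ (using that $ka \in S'$ for $k \geq 2$), hence local, so the case analysis of Section 2 puts us in the local-integral-closure regime.

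The main obstacle will be the pullback identification $[[K^{\overline{S}}]] = \phi^{-1}(A)$ in the forward direction and its analogue in the converse. Elements of $W$ are not themselves generalized power series over $K$ (since $W$ is a localization whose residue field is strictly bigger than $K$), so one has to carefully track how the support of a Hahn-series representation of an element of $W$ splits between the $\widehat{G_1}$-part (which is killed by $\phi$) and the $G'$-part (which reduces to an element of $V' \subseteq \kappa$). Once this bookkeeping is settled, integral closedness of the pullback follows from the standard valuation-pullback principle and Proposition \ref{rootclosure2} applies essentially mechanically; in the converse, the analogous work is handled directly via the lifting argument for elements of $\kappa$.
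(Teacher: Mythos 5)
Your proof is correct and, for the forward direction, follows the paper's argument essentially verbatim: you invoke Lemma \ref{importantlemma} with $S_i=H_i$, compute $\overline{S}=\widehat{G_1}\cup H$ via the archimedean property of $G_1$, apply Proposition \ref{rootclosure2}, and identify $[[K^{\overline{S}}]]$ with the pullback $\pi^{-1}(A)$ by splitting supports between $\widehat{G_1}$ and $G'$ --- exactly the paper's route, with the added (and welcome) explicit verification of the $a_i$-hypothesis of Proposition \ref{rootclosure2}.

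For the converse your presentation differs mildly from the paper's. The paper defines $A'$ \emph{bottom-up} as the integral closure of $[[K^{H'}]]$ with $H'=\bigcup_{i\geq 2}\overline{S_i}\cup\{\boldsymbol{0}\}$ and asserts the thesis; you define $A'$ \emph{top-down} as $\phi(\overline{D})$ and then verify directly that $\overline{D}=\phi^{-1}(A')$, that $A'$ is integrally closed in $\kappa$ (by lifting integral equations along $\phi$), and that $A'$ is local. Both arguments pivot on the same key fact, namely $\m_W\subseteq\overline{D}$ (equivalently $\overline{S_1'}=\widehat{G_1}$), which you prove correctly from the support description of elements of $\m_W$ and the rank-one hypothesis on $G_1$. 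Your version buys a self-contained verification of the pullback identity and of the integral closedness of $A'$, which the paper leaves implicit; the paper's version buys an explicit combinatorial description of $A'$ in terms of the semigroups $\overline{S_i}$. The only point where both you and the paper rely on an unproved assertion at the same level of terseness is the locality of $A'$ (you appeal to the Gilmer--Heinzer/Gilmer--Huckaba case analysis recalled in Section 2, the paper to the locality of an integral closure of a generalized power series ring); neither is a gap beyond what the paper itself accepts.
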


\begin{proof}
First notice that, by quotienting $G$ with respect to $G_1$, the quotient field of $V'$ can be identified with $\kappa$ and $V$ is the pullback of $V'$ with respect to the map $W \to \kappa$. Hence $A$ can be embedded in $\kappa$.

For $i=0, \ldots, n$, define $\widehat{G_i} $ and $\widehat{G'_i} $ as above in this section. %With a slight abuse of notation, we identify $\widehat{G_i} $ with $\widehat{G'_i} $ for $i \geq 2$. % since they are isomorphic.
Decompose $H$ as the union $H= H_0 \cup \bigcup_{i=2}^n H_i$, where $H_i = H \cap \widehat{G'_i}$. Since for $i \geq 2$ there exists a bijective map between $\widehat{G_i} $ and $\widehat{G'_i} $, we can identify $H_i$ with a subsemigroup of $\widehat{G_i} $.

To construct $S$, set $S_0= \lbrace \boldsymbol{0} \rbrace$ and for $i=2, \ldots, n$, define $S_i = H_i$. %to be a subset of $\widehat{G_i}$ such that $\overline{S_i}= H_i$ and $S_i + S_j \subseteq S_{\footnotesize\min(i,j)} $.
Since $H$ is a monoid, the semigroups $S_2, \ldots, S_n$ defined in this way obviously satisfy the hypothesis of Lemma \ref{importantlemma}. Use Lemma \ref{importantlemma} to produce $S_1 \subseteq \widehat{G_1}$ such that $S := \bigcup_{i=0}^n S_i$ is a monoid and the ring $D= [[K^S]]$ satisfies condition $(i)$. 

By construction, the ring $D$ is maximal with respect to excluding an element $a \in \widehat{G_1}$ and
 the set $ \lbrace g \in \widehat{G_1} \, | \, g > a \rbrace \subseteq S $. Given $g \in \widehat{G_1}$, since $G_1$ has rank one, there exists $n \geq 1$ such that $ng > a$. Thus $\overline{S_1}= \widehat{G_1}$. Clearly, $\overline{S}= \widehat{G_1} \cup H$ and $[[K^{\overline{S}}]]$ is integrally closed by standard properties of pullback diagrams.
 By Proposition \ref{rootclosure2}, $\overline{D}= [[K^{\overline{S}}]]$. Let $\pi: W \to \kappa$ be the canonical quotient map. We show that $\overline{D} = \pi^{-1}(A)$. Observe that, given $g \in G$, $\pi(X^g)=0$ if and only if $g_1 > 0$ and if $g_1 = 0$, $\pi(X^g)= X^{g'}$ where $g'$ is the component of $g$ in $G'$. It easily follows that $\pi(\overline{D}) = A$. Since $\mbox{ker}(\pi) \subseteq \overline{D}$, we obtain $\overline{D} = \pi^{-1}(A)$ and finally prove $(ii)$.
 
 Conversely, if $D=[[K^{S'}]]\subseteq V $ is a maximal excluding domain having the same quotient field as $V$, by Remark \ref{remarkpullback}, we can assume $D$ maximal with respect to excluding an element $X^a$ with $a \in \widehat{G_1}$.
 Using Theorem \ref{maxexclseries} and the same argument as above we get $\overline{S_1'} = \widehat{G_1}$.
 %and $\overline{S}= \widehat{G_1} \cup \bigcup_{i=2}^n \overline{S_i}$. 
  The thesis now follows setting $H':= \bigcup_{i=2}^n \overline{S_i} \cup \lbrace \boldsymbol{0} \rbrace$ and letting
  %defining $H':= \bigcup_{i=2}^n \overline{S_i}$ and $A':=[[K^{H'}]]. $ 
  $A'$ be the integral closure of $[[K^{H'}]]$.
\end{proof}

The next example shows that both $D$ and its integral closure can have infinitely many incomparable prime ideals.

\begin{example}
\label{ex4}
Using the notation of Theorem \ref{wildintegralclosure}, let $G'= G_2 = \Z[\sqrt{2}]$ and $G_1= \Z$. Choose
$$H = \lbrace a+b\sqrt{2}, \, | \, a, b \geq 0 \rbrace \subseteq G'_{\geq 0}. $$ We notice that $H= \overline{H}$.
%The monoid $H$ is root closed in $G'$. 
Indeed, if for some $n \geq 1$ and $a,b,c,d \in \Z$, $n(c+d\sqrt{2}) = a+\sqrt{2}b \in H$, using the irrationality of $ \sqrt{2} $, we must have $nc=a$ and $nd=b$, forcing $c,d \geq 0$.

Define $X$ to be the monomial element of $V$ of value $(0,1)$ and $Z$ to be the monomial element of $V$ of value $(0,\sqrt{2})$. The ring $A =[[K^H]] \cong K[[X,Z]]$ is a regular local ring of dimension 2. The quotient field of $A$ is properly contained in the quotient field of $V' =[[K^{G'}]]$. %The ring $V'$ can be identified with the ring of a monomial valuation over $A$.

Now construct $S$ as in Theorem \ref{wildintegralclosure}. Looking at the notation of Lemma \ref{importantlemma} and at Remark \ref{remS1}, we notice that, if we choose $a=(1,0)$, we get $S_1^*= \emptyset$. Therefore Lemma \ref{importantlemma} gives a precise way to construct $S$ that does not require further choices. Let $Y$ be the monomial element of $V$ of value $(1,0)$. The ring $D = [[K^S]]$ is maximal with respect to excluding $Y$. 
The integral closure $\overline{D}$ is the pullback of $A$ with respect to the map $W \to \kappa$. 
Hence, $\overline{D}$ is local and has infinitely many incomparable prime ideals (of height 2). Therefore it can be neither a PVD nor Pr\"{u}fer.
In this example we can identify $W$ with $K((X,Z))[[Y]]$.
Then, if $\m_W$ is the maximal ideal of $W$, we have $\overline{D} \cong K[[X,Z]] + \m_V$. 

The prime ideals of height 2 of $\overline{D}$ are principal, generated by elements that are also elements of $D$ as a consequence of the way $S$ is defined in Theorem \ref{wildintegralclosure}. For this reason, they all contract to distinct prime ideals of $D$. It follows that also $D$ has infinitely many incomparable prime ideals. 
\end{example}

%\bigskip

%\bf can we describe the divisorial ideals/finitely generated ideals/homogeneous ideals of this family? Does the Kronecker function ring coincide with Nagata ring in this family? 

%\section{Kronecker function rings for non-integrally closed domains}
\section{Intersections of Nagata extensions of overrings}
In this section we consider intersections of Nagata rings in order to extend the concept of Kronecker function rings to non-integrally closed domains. For simplicity we suppose all the rings in this section to have finite Krull dimension. In particular every integrally closed maximal excluding domain will be a valuation domain.

Let $D$ be an integral domain. We say that a collection of overrings $\mathcal{F}= \lbrace D_i \rbrace_{i \in \Lambda} $ is a defining family for $D$ if $ \bigcap_{D_i \in \mathcal{F}} D_i = D. $ From what observed in Section 2, every integral domain always admits a defining family formed by maximal excluding overrings. Moreover we can always consider defining families of $D$ where all the non-integrally closed rings do not contain the integral closure of $D$. Indeed $D$ can be always express as $$ D = \overline{D} \cap \bigcap_{\alpha \in \mathcal{Q}(D) \setminus \overline{D}} A_{\alpha} $$ where $A_{\alpha}$ is a non-integrally closed overring of $D$ maximal with respect to excluding the element $\alpha$.

\begin{definition}
\label{defkron}
Let $D$ be an integral domain and let $\mathcal{F}$ be a defining family of $D$. We say that the ring 
$$  Kr(D)^{\mathcal{F}}= \bigcap_{D_i \in \mathcal{F}} D_i(t) $$ is the \it Kronecker function ring \rm of $D$ with respect to $\mathcal{F}$.
\end{definition}

When $D$ is integrally closed and the family $\mathcal{F}$ is contained in Zar$(D)$, the ring $Kr(D)^{\mathcal{F}}$ is a classical Kronecker function ring. We already recalled in Section 2, that an integrally closed domain is Pr\"{u}fer if and only if its Kronecker function ring $Kr(D)$ %defined taking $\mathcal{F}= \mbox{Zar}(D)$ 
is equal to the Nagata ring $D(t)$ (in this case the Nagata ring is the unique Kronecker function ring).
Also if $D$ is a maximal excluding domain, since the only defining family of $D$ is $\lbrace D \rbrace$, we get that the only Kronecker function ring, according to Definition \ref{defkron} is $D(t)$. However, in general Pr\"{u}fer domains and maximal excluding domains may behave differently with respect to intersecting Nagata rings of overrings.
For a Pr\"{u}fer domain $D$, the operation of Nagata ring extension, mapping an overring $A$ to its Nagata ring $A(t)$, commutes with arbitrary intersection of overrings. Conversely, one may observe that if the integral closure of an arbitrary integral domain is not Pr\"{u}fer, then the operation of Nagata ring extension does not commute with intersection for some collections of overrings. This is due to the fact that the Kronecker function ring of an integrally closed domain that is not Pr\"{u}fer is always strictly bigger than the Nagata ring. %Hence, a maximal excluding domain whose integral closure is not Pr\"{u}fer 

We dedicate this section to investigating whether an intersection of Nagata rings is a Nagata ring. The next section is dedicated to constructing relevant families of non-integrally closed Kronecker function rings that behave similarly to the classical integrally closed ones.
%It is natural to ask 
The following question arises naturally:

\begin{question}
Let $D$ be an integral domain and suppose that $\overline{D}$ is a Pr\"{u}fer domain. If $\mathcal{F}$ is an arbitrary collection of overrings of $D$, then is $\bigcap_{A \in \mathcal{F}}A(t)= (\bigcap_{A \in \mathcal{F}}A) (t)$? In particular, is $\overline{D}$ a Pr\"{u}fer domain if and only if Nagata ring extension commutes with intersection for every collections of overrings of $D$?
\end{question}

A positive answer to the above question would imply that all the Kronecker function rings of a domain $D$ such that $\overline{D}$ is Pr\"{u}fer coincide with $D(t)$. We are able to give a positive answer to this question in Theorem \ref{semilocalprufer} in the case $\overline{D}$ is a semilocal Pr\"{u}fer domain of finite dimension and the residue fields of $D$ are sufficiently large. Our result implies a positive answer in the case $D$ has finite dimension and the integral closure of $D$ is a valuation ring or the intersection of two valuation rings, without other assumptions on the residue fields. We start by proving a lemma, which points out a condition for an intersection of local domains to be local. Given two (non-necessarily local) integral domains $ A \subseteq B $, we say that $B$ \it dominates \rm $A$ if every maximal ideal of $A$ is contained in some maximal ideal of $B$ and every maximal ideal of $B$ contracts to some maximal ideal of $A$. If $A$ and $B$ are local this means simply that $\m_{A} \subseteq \m_{B}$.

%\begin{lemma}
%\label{straightforwardlemma}
%Let $D$ be an integral domain with quotient field $K$ and let $\lbrace D_i \rbrace $ be a collection of overrings of $D$ such that $\bigcap_{i} D_i(t) = D(t)$. Then $D=\bigcap_i D_i$.
%\end{lemma}

%\begin{proof}
%Pick $x \in \bigcap_i D_i$. Then $x \in (\bigcap_i D_i)(t) \subseteq \bigcap_{i} D_i(t) = D(t)$. Hence $x \in D(t) \cap K = D$.
%\end{proof}

%\subsection{When Kronecker rings coincide with the Nagata ring?}

\begin{lemma}
\label{intersect1}
%Let $A$ and $B$ be two local domains with the same quotient field. Suppose that there exists a valuation overring $V$ such that $A, B \subseteq V$ and $\m_A \subseteq \m_V$. Then $A \cap B$ is local.
Let $D$ be an integral domain. Let
$\lbrace D_i \rbrace_{i \in \Lambda} $ be a collection of local overrings of $D$ such that $D= \bigcap_{i} D_i$. Suppose that there exists a valuation overring $V$ that contains all the rings $D_i$ and dominates all of them, except at most one. Then $D$ is local.
\end{lemma}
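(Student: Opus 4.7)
The plan is to characterize the non-units of $D$ in terms of the local structure of each $D_i$ and then show they are closed under addition, so that $D$ is local.

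First I would observe the following characterization: an element $x \in D$ is a unit in $D$ if and only if $1/x \in D_i$ for every $i \in \Lambda$ (using $D = \bigcap_i D_i$), and since each $D_i$ is local this is equivalent to $x \notin \m_{D_i}$ for every $i$. Thus the set of non-units of $D$ equals $\bigcup_{i \in \Lambda} (\m_{D_i} \cap D)$. Since this set is plainly closed under multiplication by $D$, to prove $D$ is local it suffices to show it is closed under addition, which is equivalent to the statement that whenever $a,b \in D$ satisfy $a+b = 1$, at least one of $a,b$ is a unit in $D$.

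I would argue this by contradiction. Assume $a+b=1$ with both $a,b$ non-units, and let $i_0 \in \Lambda$ be the index whose ring $D_{i_0}$ is (possibly) not dominated by $V$. For each non-unit $x \in \{a,b\}$ set $I_x := \{i \in \Lambda : x \in \m_{D_i}\}$; both $I_a$ and $I_b$ are nonempty. The easy case is when both $I_a \not\subseteq \{i_0\}$ and $I_b \not\subseteq \{i_0\}$: then we can pick $i \in I_a$, $j \in I_b$ with $i,j \neq i_0$, and since $V$ dominates $D_i$ and $D_j$ we get $a,b \in \m_V$, hence $1 = a+b \in \m_V$, a contradiction.

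The critical case, and the main obstacle, is when (say) $I_a = \{i_0\}$. The argument here is the essential use of the hypothesis that $V$ both contains and dominates all $D_i$ with $i \neq i_0$: from $a \in \m_{D_{i_0}}$ and the locality of $D_{i_0}$ we get that $b = 1-a$ is a unit in $D_{i_0}$, so $1/b \in D_{i_0} \subseteq V$. On the other hand, since $b$ is a non-unit of $D$, $I_b \neq \emptyset$, and because $b \notin \m_{D_{i_0}}$ we must have some $j \in I_b$ with $j \neq i_0$. Domination of $D_j$ by $V$ then gives $b \in \m_{D_j} \subseteq \m_V$, so $1/b \notin V$, contradicting $1/b \in V$. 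The symmetric case $I_b = \{i_0\}$ is identical. This exhausts all possibilities and shows $D$ is local, with maximal ideal equal to the set $\bigcup_i (\m_{D_i} \cap D)$.
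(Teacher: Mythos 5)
Your proof is correct and rests on the same two ingredients as the paper's: the identification of the non-units of $D$ with $\bigcup_{i}(\m_{D_i}\cap D)$, domination forcing $\m_{D_i}\subseteq \m_V$ for $i\neq i_0$, and the containment $D_{i_0}\subseteq V$ reconciling units of $D_{i_0}$ with units of $V$. The paper packages this more compactly as the single direct containment $\bigcup_{i}(\m_{D_i}\cap D)\subseteq(\m_{D_{i_0}}\cap D)\cup(\m_V\cap D)=\m_{D_{i_0}}\cap D$, so that all non-units lie in one ideal, whereas you reach the same conclusion by contradiction and case analysis; the content is essentially identical.
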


\begin{proof}
Call $D_1$ the ring in the family $\mathcal{F} $ possibly not dominated by $V$.
Pick two non-units $x, y \in D$. Then $x, y \in \bigcup_{i \in \Lambda} (\m_{D_i} \cap D)\subseteq (\m_{D_1} \cap D) \cup (\m_V \cap D) = \m_{D_1} \cap D. $ It follows that $x+y \in \m_{D_1} \cap D$ is not a unit in $D$. Notice also that if $D_1$ is dominated by $V$, then also $D$ is dominated by $V$.
\end{proof}

The main assumption that we need to prove Theorem \ref{semilocalprufer} is related to the cardinalities of the residue fields of an integral domain $D$. Notice that $D$ contains $s$ units $u_1, \ldots, u_s$ such that $u_i-u_j$ is still a unit for every $i,j$ if and only if all the residue fields of $D$ have cardinality at least $ s+1$. For domains containing an infinite field or containing $\Z(t)$, Theorem \ref{semilocalprufer} can be proved without further assumptions.

%\begin{definition}
%\label{independentunits}
%Let $D$ be an integral domain. We say that two units $u,v \in D$ are independent if $u-v$ is also a unit of $D$.
%\end{definition}

%Clearly if a domain contains an infinite field, then it contains an infinite set of independent units. If all the residue fields of $D$ have cardinality at least $s$, then $D$ contains $s$ independent units.

\begin{lemma}
\label{semilocalunits}
Let $D$ be a semilocal domain with $s$ maximal ideals. Suppose that all the residue fields of $D$ have cardinality at least $ s$.
Fix a set of units $U \subseteq D$, of cardinality at least $s-1$ and such that $u_i-u_j$ is a unit for every distinct $u_i, u_j \in U$. 
%contains a set of independent units $U$ of cardinality at least $s-1$. %such that $u_i-u_j$ is a unit for every distinct $u_i, u_j \in U$. 
%Suppose that each residue field of $D$ has cardinality at least $s-1$.
Let $x_1, \ldots, x_n \in D$ be elements such that $(x_1, \ldots, x_n)=D$.
Then there exists a unit of $D$ of the form $c = \sum_{k=1}^n \delta_k x_k $ with $\delta_k \in U \cup \lbrace 0 \rbrace$ for every $k$.
\end{lemma}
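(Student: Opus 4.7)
The plan is to construct $(\delta_1, \dots, \delta_n)$ greedily, coordinate by coordinate, using as the key tool that the reduction map $U \cup \{0\} \to D/\m_i$ is injective for every maximal ideal $\m_i$ of $D$. This injectivity holds because $0 - u = -u$ is a unit for any $u \in U$, so $U \cup \{0\}$ inherits the pairwise unit-difference property; combined with $|U \cup \{0\}| \geq s$, it gives at least $s$ distinct images in each of the $s$ residue fields.

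First I would dispose of the case in which some $x_k$ is itself a unit of $D$: just take $c = u x_k$ with any $u \in U$ and the other $\delta_l = 0$. Thereafter I may assume every $x_k$ lies in at least one $\m_i$, i.e.\ $C_k := \{i : x_k \notin \m_i\}$ satisfies $|C_k| \leq s - 1$.

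The core of the proof is then a greedy sweep. For each $i$, the set $C^{-1}(i) = \{k : x_k \notin \m_i\}$ is nonempty (because $(x_1, \dots, x_n) = D$), so $k^*(i) := \max C^{-1}(i)$ is well-defined; set $f_k := |\{i : k^*(i) = k\}|$ and note $f_k \leq |C_k| \leq s - 1$. Processing $k = 1, 2, \dots, n$ in order, I set $\delta_k := 0$ whenever $k$ is not of the form $k^*(i)$. At a critical stage $k = k^*(i)$, the image of $c = \sum_l \delta_l x_l$ modulo $\m_i$ equals $\bar{x}_k^{(i)} \delta_k + \sum_{l < k,\, l \in C^{-1}(i)} \bar{x}_l^{(i)} \delta_l$ (the terms $l > k$ vanish by maximality of $k^*(i)$), and requiring it to be nonzero excludes a unique residue for $\delta_k$ in $D/\m_i$ (since $\bar{x}_k^{(i)}$ is a unit there), hence at most one element of $U \cup \{0\}$ by injectivity. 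Summing over the $f_k$ critical indices at this stage, at most $f_k \leq s - 1 < |U \cup \{0\}|$ elements of $U \cup \{0\}$ are blocked, so a valid $\delta_k$ always remains.

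The central obstacle is securing the bound $f_k \leq s - 1$: this is precisely why the case of a unit $x_k$ must be split off in advance, for otherwise $|C_k|$ could reach $s$ and the pool $U \cup \{0\}$ could be exhausted. Once all $\delta_k$ have been chosen, for every $i$ the stage $k = k^*(i)$ has arranged that the image of $c$ in $D/\m_i$ is nonzero, so $c$ escapes every maximal ideal of $D$ and is a unit, as required.
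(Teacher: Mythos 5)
Your proof is correct. The two facts you isolate carry the whole argument: the reduction map $U \cup \lbrace 0 \rbrace \to D/\mathfrak{m}_i$ is injective for every maximal ideal (so a single linear condition modulo $\mathfrak{m}_i$ excludes at most one coefficient from a pool of size at least $s$), and once the largest index $k^*(i)$ with $x_{k^*(i)} \notin \mathfrak{m}_i$ has been processed, the class of the partial sum modulo $\mathfrak{m}_i$ can no longer change. Splitting off the case of a unit generator to force $f_k \leq |C_k| \leq s-1$ is exactly the right precaution, and the count $f_k \leq s-1 < s \leq |U \cup \lbrace 0 \rbrace|$ closes the argument. The paper reaches the same conclusion by a different route: it inducts on $n$, and in the nontrivial case relabels so that $x_1,\ldots,x_{n-1}$ lie in $\mathfrak{m}_s$ while $x_n$ lies exactly in $\mathfrak{m}_1,\ldots,\mathfrak{m}_{s'}$, localizes at $D \setminus (\mathfrak{m}_1 \cup \ldots \cup \mathfrak{m}_{s'})$ to produce by induction a combination $c'=\sum_{k<n}\delta_k x_k$ avoiding those ideals, and then adjusts with a final term $\delta_n x_n$, $\delta_n \in U$, using the observation that two distinct $u,v \in U$ cannot put $c'+ux_n$ and $c'+vx_n$ in a common maximal ideal. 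Your greedy sweep replaces the localization by the explicit assignment of a decisive index to each maximal ideal; this avoids having to verify the inductive hypotheses in the localized ring, makes the counting more uniform (every stage is measured against the same pool $U \cup \lbrace 0 \rbrace$, with $0$ admitted as a coefficient), and handles all maximal ideals symmetrically rather than through the reordering and the distinguished ideal $\mathfrak{m}_s$. The underlying mechanism — the pairwise unit-difference property ensuring each maximal ideal vetoes at most one choice — is the same in both; yours is non-inductive and somewhat more transparent, the paper's fits the localization techniques it uses throughout Section 4.
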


\begin{proof}
We prove it by induction on $n$. % and $s$. 
In the case $n=1$, we clearly must have that $x_1$ is a unit, and thus $\delta_1 x_1$ is a unit for every $\delta_1 \in U$. %If $s=1$ the ring is local and as well we must have that some $x_i$ is a unit.
Thus assume the thesis true for every set of at most $n-1$ elements. % in a ring with at most $s$ maximal ideals.
Therefore if after reordering the generators $(x_1, \ldots, x_{n-1})=D$, we conclude by applying the inductive hypothesis and setting $\delta_n = 0$. 

Call $\m_1, \ldots, \m_s$ the maximal ideals of $D$. Possibly relabeling, suppose that $x_1, \ldots, x_{n-1} \in \m_s$, and there exist $1 \leq s' < s$ such that $x_n  \in (\m_1 \cap \ldots \cap \m_{s'}) \setminus (\m_{s'+1} \cup \ldots \cup \m_s)$. The ideal generated by $x_1, \ldots, x_{n-1}$ cannot be contained in any of the maximal ideals $ \m_1, \ldots, \m_{s'} $. Consider the ring $A= S^{-1}D$ where $S = D \setminus (\m_1 \cup \ldots \cup \m_{s'})$. The ring $A$ is semilocal and contains $U$. Moreover $(x_1, \ldots, x_{n-1})A=A$. By inductive hypothesis there exists a unit of $A$ of the form $c' = \sum_{k=1}^{n-1} \delta_k x_k $ with $\delta_k \in U \cup \lbrace 0 \rbrace$. Observing that $c' \in D$, we obtain $c' \not \in \m_1 \cup \ldots \cup \m_{s'}$. 

Take now two elements of the form $d_1= c'+ux_n$, $d_2 =c'+vx_n $ with $u,v \in U$, $u \neq v$. We claim that $d_1, d_2$ cannot be contained in a common maximal ideal. Indeed if $d_1, d_2 \in \m_i$, then we would have $d_1 - d_2 = (u-v)x_n \in \m_i$ and $vd_1-ud_2= (v-u)c' \in \m_i$, implying that $c', x_n \in \m_i$, since $u-v$ is a unit. This is a contradiction with the choice of $c'$. Moreover any element of the form $c'+ux_n$ cannot be in a maximal ideal containing either $c'$ or $x_n$.
Using now that $U$ contains at least $s-1$ elements and $D$ has only $s$ maximal ideals we can find an element of the form $c' + \delta_n x_n$ which is a unit in $D$.
\end{proof}

\begin{prop}
\label{intersectionnagatarings}
Let $D$ be an integral domain with quotient field $K$. Let
$\mathcal{F}= \lbrace D_i \rbrace_{i \in \Lambda} $ be a defining family of $D$.
Suppose that all the rings in $\mathcal{F} $ are contained in a common overring $T$ and one ring $D_1 \in \mathcal{F}$ is semilocal with $s$ maximal ideals. Also suppose that all the residue fields of $D$ have cardinality at least $s$ and $T$ dominates $D_i$ for every $i$, except possibly $D_1$. %(except possibly $D_1$) are dominated by a common overring $T $. % and that at least one of the rings $D_i$ is semilocal.
%\begin{itemize}
%\item One of the rings $D_i$ is semilocal. 
%\item Let $s:= \min_i |\Max(D_i)| < \infty.$ If $s >2$, then the numbers $2, \ldots, s-1$ are units in $D$.
%\end{itemize}
%One of the rings $D_i$ is is semilocal and thus $s:= \min_i |\Max(D_i)| < \infty. 
%$
%with $s > 1$ maximal ideals and the numbers $1,2, \ldots, s-1$ are units in $D$.
Then $D(t)= \bigcap_{i} D_i(t).$
\end{prop}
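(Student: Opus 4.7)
The inclusion $D(t)\subseteq\bigcap_i D_i(t)$ is immediate since $D\subseteq D_i$ for every $i$. For the converse, I fix $\alpha\in\bigcap_i D_i(t)$ and aim to produce $q\in D[t]$ with $c(q)=D$ and $q\alpha\in D[t]$; this exhibits $\alpha\in D(t)$.

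Write $\alpha=f/g$ with $f,g\in D[t]$, $g\neq 0$. Since $\alpha\in D_1(t)$, there exists $G=\sum_{k=0}^n a_k t^k\in D_1[t]$ with $c(G)D_1=D_1$ and $G\alpha\in D_1[t]$. Because $D_1$ is semilocal with $s$ maximal ideals and the residue fields of $D$ all have cardinality at least $s$, one can choose a set $U\subseteq D$ of at least $s-1$ units with pairwise unit differences, which remain units with unit differences in $D_1$. Applying Lemma \ref{semilocalunits} to the generating set $\{a_0,\ldots,a_n\}$ of the semilocal ring $D_1$, I obtain coefficients $\delta_0,\ldots,\delta_n\in U\cup\{0\}\subseteq D$ such that $u:=\sum_k \delta_k a_k\in D_1^\times$.

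Define the key polynomial $q(t):=\sum_{k=0}^n \delta_k t^k\in D[t]$. Since the nonzero $\delta_k$'s are units of $D$ and at least one of them is nonzero, $c(q)=D$.

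It remains to check $q\alpha\in D[t]$. The elementary identity $D_i(t)\cap K[t]=D_i[t]$ reduces this to two checks: that $q\alpha\in K[t]$ (a $K[t]$-divisibility statement), and that $q\alpha\in D_i(t)$ for each $i$. The latter is automatic from $q\in D[t]\subseteq D_i[t]$ and $\alpha\in D_i(t)$; once both are in place, $q\alpha$ lies in every $D_i[t]$ and therefore in $\bigcap_i D_i[t]=D[t]$.

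For the $K[t]$-divisibility $g\mid qf$, I would exploit the reverse polynomial $q^\star(t):=t^n q(1/t)=\sum_k\delta_k t^{n-k}\in D[t]$. The coefficient of $t^n$ in the product $Gq^\star$ equals $u$, so $c(Gq^\star)D_1=D_1$; moreover $Gq^\star\alpha=q^\star(G\alpha)\in D_1[t]$. Writing $\alpha$ in lowest terms in $K[t]$ as $\alpha=f_0/g_0$, the reduced denominator $g_0$ divides $G$ in $K[t]$, and a content-ideal argument using the unit $u$ forces $g_0\mid q^\star$ (equivalently $g_0\mid q$), hence $g\mid qf$ in $K[t]$. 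The $T$-domination hypothesis is invoked here to guarantee that this $D_1$-content calculation transfers uniformly across the family: for every $i\neq 1$, the unit content $c(q)=D$ extends to unit content in the dominated ring $D_i$ (because maximal ideals of $D_i$ are contractions of maximal ideals of $T$), so no new denominators for $q\alpha$ appear when localizing at any $D_i$.

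\textbf{Main obstacle.} The delicate step is promoting the scalar identity $u=\sum_k\delta_k a_k\in D_1^\times$ produced by Lemma \ref{semilocalunits} to the polynomial divisibility $g_0\mid q^\star$ in $K[t]$; the lemma provides a single linear witness, not a polynomial factorization, and translating it into the desired divisibility requires carefully using the unit content of $Gq^\star$ in $D_1$ together with the reduced-denominator structure of $\alpha$. The $T$-domination hypothesis then serves to uniformize this $D_1$-level fact across all remaining indices $i\neq 1$ in the family.
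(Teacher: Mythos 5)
Your overall reduction (produce $q\in D[t]$ with $c(q)=D$ and $q\alpha\in D[t]$, so that $\alpha=(q\alpha)/q\in D(t)$) is a correct target, but the polynomial $q$ you construct cannot do the job, and the step where you claim it does is false. The polynomial $q=\sum_k\delta_k t^k$ is built only from the selector coefficients $\delta_k$ returned by Lemma \ref{semilocalunits}; it carries no information about the denominator of $\alpha$, so there is no reason for $q\alpha$ to lie in $K[t]$, let alone in $D[t]$. A minimal counterexample: take the family to be $\{V\}$ with $V$ a valuation domain, $\pi\in\m_V$ nonzero, and $\alpha=\frac{1}{t+\pi}$. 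Then $G=t+\pi$, the lemma selects the unit coefficient $a_1=1$ and gives $\delta_0=0$, $\delta_1$ a unit, so $q=\delta_1 t$ and $q\alpha=\frac{\delta_1 t}{t+\pi}\notin K[t]$; in particular $g_0\nmid q^{\star}$. So the step you flag as the ``main obstacle'' is not delicate but unattainable: a single scalar witness $u=\sum_k\delta_k a_k\in D_1^{\times}$ cannot be promoted to a divisibility of polynomials. Two further problems: the identity $D_i(t)\cap K[t]=D_i[t]$ you invoke is unjustified when $D_i$ is not integrally closed (Dedekind--Mertens plus the determinant trick applied to $c(h)^{m+1}=c(h)^m c(hq)$ only yields $c(h)\subseteq\overline{D_i}$), and your reading of the $T$-domination hypothesis is off the mark, since $c(q)=D$ gives $c(q)D_i=D_i$ for \emph{any} overring $D_i$ with no domination needed.

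The paper's argument is scalar, not polynomial, and the lemma is applied to a different set of elements. Writing $\alpha=f/g$ with $f,g\in D[t]$ and $g=\sum_k b_kt^k$, one finds for each $i$ an element $x_i\in K$ with $\frac{a_k}{x_i},\frac{b_k}{x_i}\in D_i$ and $\frac{1}{x_i}(b_0,\dots,b_n)D_i=D_i$; the common overring $T$ serves to show that all ratios $x_i/x_j$ are units of $T$. Lemma \ref{semilocalunits} is then applied in the semilocal ring $D_1$ to the normalized coefficients $b_l/x_1$ of the \emph{actual denominator} $g$, producing a single scalar $c_1=\sum_l\delta_l b_l\in D$ with $c_1=x_1u_1$, $u_1\in D_1^{\times}$. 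The domination hypothesis is what upgrades ``$c_1/x_i$ is a unit of $T$ lying in $D_i$'' to ``$c_1/x_i$ is a unit of $D_i$'' for $i\neq 1$; this yields $f/c_1,\,g/c_1\in\bigcap_iD_i[t]=D[t]$ with $c(g/c_1)=D$, and $\alpha=\frac{f/c_1}{g/c_1}\in D(t)$. To repair your write-up, apply the lemma to the coefficients of $g$ rather than of $G$, and divide numerator and denominator by the resulting scalar $c_1$ instead of multiplying $\alpha$ by a polynomial.
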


\begin{proof}
%We start observing that by Lemma \ref{straightforwardlemma} we can possibly extend to the Nagata rings $D(t')( t)$, $D_i(t')(t), T(t')(t)$ and reduce to prove the theorem in the case where $D$ contains an infinite set $U$ of units whose additive relations satisfy the assumptions of Lemma \ref{semilocalunits}. 
As in Lemma \ref{semilocalunits}, we can fix a set of units $U \subseteq D$, of cardinality at least $s-1$ such that $q_i-q_j$ is a unit for every distinct $q_i, q_j \in U$. Clearly the set $U$ is also contained in $D_1$.
%$D$ contains a set $U$ of independent units of cardinality at least $s-1$. Obviously also all the overrings of $D$ contain this set $U$.

We always have the inclusion $D(t) \subseteq \bigcap_{i} D_i(t).$ To prove the opposite inclusion pick $\phi \in \bigcap_{i} D_i(t)$. Such an element $\phi$ can be always written as $\frac{f}{g}$ with $f,g \in D[t]$. Say that $f= \sum_{k=0}^m a_k t^k$ and $g= \sum_{k=0}^n b_k t^k$. %If the ideal $(b_0, \ldots, b_n)D = D$, then $F \in D(t)$. Hence, suppose that $b_0, \ldots, b_n \in \m$ for some maximal ideal $\m$ of $D$. 

Since $\phi \in \bigcap_{i} D_i(t)$, for every $i$ we can find $x_i \in K$ such that $ \frac{a_k}{x_i}, \frac{b_k}{x_i} \in D_i $ for every $k$ and the ideal $\frac{1}{x_i}(b_0, \ldots, b_n)D_i$ is the unit ideal.
Hence we can choose an element $c_i \in (b_0, \ldots, b_n)D_i$ such that $c_i= x_iu_i$ with $u_i$ a unit in $D_i$. In particular $x_i \in D_i$. For $j \neq i$, since $ \frac{b_k}{x_j} \in D_j $ for every $k$, we get $ \frac{c_i}{x_j}= \frac{x_i}{x_j}u_i \in T $. Similarly, $ \frac{c_j}{x_i}= \frac{x_j}{x_i}u_j \in T $ and, since $u_i, u_j$ are units in $T$, we obtain that also $ \frac{x_i}{x_j} $ is a unit in $T$ for every $i,j$.

Let us now consider the semilocal domain $D_1$. The ideal $ \frac{1}{x_1}(b_0, \ldots, b_n)D_1 = D_1 $. By Lemma \ref{semilocalunits} we can choose the element $c_1 = \sum_{l=0}^n \delta_l b_l$ with all $\delta_l \in U \cup \lbrace 0 \rbrace \subseteq D$. Thus $c_1 \in D$. For every $i$, $\frac{c_1}{x_i} \in D_i$ and from what said above $\frac{c_1}{x_i}= \frac{x_1}{x_i}u_1$ is a unit of $T$. Since, for $i \neq 1$, the overring $T$ dominates $D_i$, then $\frac{c_1}{x_i}$ has to be a unit of $D_i$. It follows that for every $i$ we can write
$c_1 = x_iw_i$ with $x_i, w_i \in D_i$ and $w_i$ a unit in $D_i$ (for $i=1$, set $w_1= u_1$). Hence, $ \frac{a_k}{c_1}, \frac{b_k}{c_1} \in \bigcap_i D_i = D $. This implies that $ \frac{f}{c_1}, \frac{g}{c_1} \in D[t] $ and, since $c_1 \in (b_0, \ldots, b_n)D$ we get that the ideal of $D$ generated by the coefficients of $\frac{g}{c_1}$ coincides with $D$. This implies $ \phi= \frac{f}{g}\frac{c_1}{c_1} \in D(t) $.
\end{proof}

\begin{theorem}
\label{semilocalprufer}
Let $D$ be an integral domain such that $\overline{D}$ is a semilocal Pr\"{u}fer domain of finite dimension with $s$ maximal ideals. Suppose that all the residue fields of $D$ have cardinality at least $s$. %$D$ contains at least $s-1$ independent units. 
%$ = V_1 \cap \ldots \cap V_n$ is an intersection of $n$ distinct valuation ring of rank one. 
Let $\mathcal{F}$ be any collection of local overrings of $D$ such that $\bigcap_{A \in \mathcal{F}} A = D$. Then $\bigcap_{A \in \mathcal{F}} A(t) = D(t)$.
\end{theorem}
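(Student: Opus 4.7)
The plan is to adapt the proof of Proposition \ref{intersectionnagatarings}, with $\overline{D}$ playing the role of the semilocal anchor ring $D_1$. Indeed, $\overline{D}$ is semilocal Pr\"{u}fer of finite dimension with $s$ maximal ideals, and since semilocal Pr\"{u}fer domains are Bezout, every finitely generated ideal of $\overline{D}$ is principal. Given $\phi \in \bigcap_{A \in \mathcal{F}} A(t)$, I would write $\phi = f/g$ with $f = \sum a_k t^k, g = \sum b_k t^k \in D[t]$ and set $I = (b_0, \ldots, b_n) \subseteq D$.

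First, using the cardinality hypothesis on the residue fields of $D$, I would fix a set $U \subseteq D$ of $s - 1$ units such that $u - v$ is a unit in $D$ for every distinct $u, v \in U$. Because $\overline{D}$ is integral over $D$, every maximal ideal of $\overline{D}$ contracts to a maximal ideal of $D$; hence units of $D$ remain units of $\overline{D}$, and the pairwise-unit-differences property lifts to $\overline{D}$. By Bezout-ness, $I\overline{D}$ is a principal fractional ideal; after dividing $f$ and $g$ through by a generator to reduce to the case $I\overline{D} = \overline{D}$, Lemma \ref{semilocalunits} applied to $\overline{D}$ produces an element $c = \sum \delta_k b_k \in D$ with $\delta_k \in U \cup \{0\}$ satisfying $c\overline{D} = I\overline{D}$.

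Next, for each $A \in \mathcal{F}$, the hypothesis $\phi \in A(t)$ would yield a scalar $d_A \in K^*$ with $a_k/d_A, b_k/d_A \in A$ for all $k$ and $(b_0/d_A, \ldots, b_n/d_A)A = A$; in other words, $IA = d_A A$ is principal in $A$ and $f \in d_A A[t]$. Extending to the Pr\"{u}fer overring $\overline{A} \supseteq \overline{D}$ of $A$, both $c\overline{A}$ and $d_A\overline{A}$ equal $I\overline{A}$, so $c = d_A u$ for some $u \in \overline{A}^*$. Since $c \in IA = d_A A$, $u \in A$; and the identity $A^* = A \cap \overline{A}^*$, valid because $A$ is local with $\overline{A}$ integral over $A$, promotes $u$ to a unit of $A$. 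Consequently $b_k/c, a_k/c \in A$ for every $A \in \mathcal{F}$, so $b_k/c, a_k/c \in \bigcap_A A = D$, and $f/c, g/c \in D[t]$. The identity $c = \sum \delta_k b_k$ witnesses $(b_0/c, \ldots, b_n/c)D = D$, and therefore $\phi = (f/c)/(g/c) \in D(t)$.

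The delicate step I expect to require the most care is the extraction of the scalar $d_A$ from $\phi \in A(t)$, which amounts to $IA$ being principal in $A$. This is automatic when $A$ is Pr\"{u}fer, but for a general non-Pr\"{u}fer $A$ one has only that $I\overline{A}$ is principal (by the Pr\"{u}fer structure of the overring $\overline{A}$ of $\overline{D}$), and lifting this to $IA$ principal is subtle since representations of $\phi$ in $A[t]/A[t]$ may involve non-scalar polynomial multipliers in $K[t]$. This is essentially the same implicit step appearing in the proof of Proposition \ref{intersectionnagatarings}, and I expect the bulk of the technical work to lie there, to be handled by combining the Bezout property of $\overline{D}$ with the integral extension $A \subseteq \overline{A}$ and the locality of $A$.
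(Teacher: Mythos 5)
Your argument takes a genuinely different route from the paper's. The paper proves Theorem \ref{semilocalprufer} by a reduction: since $\overline{D}$ is semilocal Pr\"ufer of finite dimension, $D$ has only finitely many valuation overrings $V_1,\ldots,V_n$; the family $\mathcal{F}$ is partitioned into the subfamilies $\mathcal{F}_i$ of rings dominated by $V_i$, each intersection $A_i=\bigcap_{A\in\mathcal{F}_i}A$ is shown to be local via Lemma \ref{intersect1}, and Proposition \ref{intersectionnagatarings} is then applied three separate times (to each $\mathcal{F}_i$ with $T=V_i$, to the semilocal rings $D_i=A_i\cap\overline{D}$, and to the pair $\lbrace A_i,\overline{D}\rbrace$). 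You instead run a single direct content argument with $\overline{D}$ as the anchor, replacing the paper's domination hypothesis by the observation that an element of a local ring $A$ which becomes a unit in the integral extension $\overline{A}$ is already a unit of $A$ (by lying over). This is leaner in several respects: it bypasses Lemma \ref{intersect1} and the decomposition entirely, it does not appear to need finite dimensionality (semilocal Pr\"ufer already implies Bezout), and the key transfer $c\overline{A}=I\overline{A}=d_A\overline{A}$ uses only the extension of the equality $c\overline{D}=I\overline{D}$, not any property of $\overline{A}$ beyond integrality over $A$. Your application of Lemma \ref{semilocalunits} to $\overline{D}$ with the unit set $U$ imported from $D$ is legitimate, since units of $D$ and their pairwise differences remain units in the overring $\overline{D}$, and the resulting $c=\sum\delta_k b_k$ does lie in $D$ and in the content ideal of $g$.

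The one real issue is the step you yourself flag: extracting, for each $A\in\mathcal{F}$, a scalar $d_A$ with $a_k/d_A,\, b_k/d_A\in A$ and $(b_0/d_A,\ldots,b_n/d_A)A=A$. This amounts to asserting that $c(g)A$ is principal and contains $c(f)A$, which is not automatic for a non-Pr\"ufer local $A$; for an arbitrary representation $\phi=f/g$ it can even fail outright (take $f=gh$ with $h\in A[t]$, $c(h)=A$, and $c(g)A$ non-principal), so at minimum one must pass to a reduced representation and then argue, via a content formula, that $c(g)A$ is invertible and hence principal in the local ring $A$. You are correct that this is exactly the unproved opening claim in the paper's proof of Proposition \ref{intersectionnagatarings} (``we can find $x_i\in K$ such that\dots''), on which the paper's own proof of Theorem \ref{semilocalprufer} ultimately rests as well; so your proof is on the same footing as the paper's at this point. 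But since your write-up invokes the step directly for every $A\in\mathcal{F}$ rather than citing the proposition, a complete version of your argument would have to actually justify it rather than defer to it.
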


\begin{proof}
The fact that $\overline{D}$ is a semilocal Pr\"{u}fer domain of finite dimension is equivalent to saying that $D$ has only finitely many valuation overrings, that we call $V_1, \ldots, V_n$.
For $i= 1, \ldots, n$, set $$\mathcal{F}_i:= \lbrace A \in \mathcal{F} \, : \, A \mbox{ is dominated by } V_i  \rbrace.$$ Observe that every ring in $\mathcal{F}$ has to belong to some $\mathcal{F}_i$.
Let $A_i:= \bigcap_{A \in \mathcal{F}_i} A $. Notice that by Lemma \ref{intersect1}, $A_i$ is local and dominated by $V_i$. By Proposition \ref{intersectionnagatarings}, $\bigcap_{A \in \mathcal{F}_i} A(t) = A_i(t)$. %Thus it is sufficient to prove that $A_1(t) \cap \ldots \cap A_n(t) = D(t)$. If $n=1$, this is clear since $A_1= D$.

Since $\overline{D}$ is a Pr\"{u}fer domain, its Nagata ring coincides with the Kronecker function ring and therefore $\overline{D}(t) = V_1(t) \cap \ldots \cap V_n(t)$. Hence we have the equality $$ \bigcap_{A \in \mathcal{F}} A(t) = \bigcap_{i=1}^n A_i(t)  = \left( \bigcap_{i=1}^n A_i(t) \right) \cap \overline{D}(t)=    \bigcap_{i=1}^n (A_i(t) \cap \overline{D}(t)). $$
Set $D_i:= A_i \cap \overline{D} $. There are exactly $s$ minimal valuation overrings of $\overline{D}$ and $A_i$ is dominated by an overring of one of them, hence 
$D_i$ is a proper intersection of $s$ local rings, therefore is semilocal with at most $s$ maximal ideals. Moreover, $D = \bigcap_{i=1}^n D_i$ and all the $D_i$ are dominated by $\overline{D}$ (clearly $\overline{D}$ is the integral closure of all of them). Again by Proposition \ref{intersectionnagatarings}, $D(t)= \bigcap_{i=1}^n D_i(t)$. To conclude we only need to prove that $D_i(t)= (A_i \cap \overline{D})(t)= A_i(t) \cap \overline{D}(t)$. This follows by Proposition \ref{intersectionnagatarings} since both $A_i$ and $\overline{D}$ are contained in $V_i$, $A_i$ is dominated by $V_i$, and $\overline{D}$ is semilocal.
%We iterate the application of Proposition \ref{intersectionnagatarings}. % and work by induction on $n$. If $n=1$, then $A_1=A_n = D$ and we are done. Assume by inductive hypothesis $(A_2 \cap \ldots \cap A_n)(t) = A_2(t) \cap \ldots \cap A_n(t)$.
%$$ \bigcap_{i=1}^n (A_i(t) \cap \overline{D}(t)) = \left( \bigcap_{i=1}^n A_i(t) \right) \cap \overline{D}(t)=  \bigcap_{i=1}^n A_i(t) = \bigcap_{A \in \mathcal{F}} A(t). $$
%Both $A_1$ and $A_2 \cap \ldots \cap A_n$ are contained in $V_1$ and $V_1$ dominates $A_1$. Hence $D(t)= (A_1 \cap A_2 \cap \ldots \cap A_n)(t) = A_1(t) \cap (A_2 \cap \ldots \cap A_n)(t)$. Now, if $n=2$, we are done, otherwise we continue the process observing that $A_2$ and $A_3 \cap \ldots \cap A_n$ are both contained in $V_2$ and $V_2$ dominates $A_2$. Thus $ (A_2 \cap A_3 \cap \ldots \cap A_n)(t) = A_2(t) \cap (A_3 \cap \ldots \cap A_n)(t) $. In finitely many steps we get the thesis.
%Let $A_i:= \bigcap_{A \in \mathcal{F}_i} A $. All the rings in $\mathcal{F}_i$ are local, one dimensional and contained in $V_i$. Thus they are dominated by $V_i$. By Proposition \ref{intersectionnagatarings}, $\bigcap_{A \in \mathcal{F}_i} A(t) = A_i(t)$.
\end{proof}

 Notice that the above theorem implies that if $\overline{D}$ is a Pr\"{u}fer domain of finite dimension with at most 2 maximal ideals, then for every defining family $\mathcal{F}$, we have $Kr^{\mathcal{F}}(D)= D(t)$.
 We ask whether the same result of Theorem \ref{semilocalprufer} holds in general removing the assumptions on the residue fields of $D$.

We pass now to study the case of an intersection of a semilocal domain $A$ and a valuation domain $V$, having the same quotient field. This case is relevant for the results in Section 5. Again, in the case $A$ is local, the thesis of the next theorem holds without assumptions on the residue fields.

\begin{theorem}
\label{VcapA}
Let $V$ be a valuation domain of finite dimension and let $A$ be a semilocal domain such that $\mathcal{Q}(A)= \mathcal{Q}(V)= \mathcal{Q}(V \cap A)$. Assume that $A$ has $s$ maximal ideals $\m_1, \ldots, \m_s$ and all the residue fields of $A \cap V$ have cardinality at least $s+1$.
The following assertions hold:
\begin{enumerate}
\item[(i)]  $A \cap V$ is local if and only if $A$ is local and is dominated by an overring of $V$. In this case $\m_A$ is a common prime ideal of $A$ and $A \cap V$, $A = (A \cap V)_{\m_A}$, and $ \frac{A \cap V}{\m_A} $ is a valuation domain with quotient field $ \frac{A}{\m_A}$.
\item[(ii)] If $A \cap V$ is not local, then $A_{\m_i} = (A \cap V)_{\m_i \cap V}$ for every $i=1, \ldots, s$.
\item[(iii)] If $A$ and $ V$ have no common proper overrings, then $V = (A \cap V)_{\m_V \cap A}.$
\item[(iv)] If $A \cap V$ is not local and $A \subseteq V_Q$ for some prime ideal $Q $ of $V$, maximal with respect to this property, then $ (A \cap V)_{\m_V \cap A} = A_{Q \cap A} \cap V.$
\item[(v)] $(A \cap V)(t)= A(t) \cap V(t)$.
\end{enumerate}
\end{theorem}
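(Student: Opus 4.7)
I will prove (i)--(iv) first, obtaining an explicit description of $A\cap V$ as a semilocal domain whose localizations at maximal ideals are identified with $A_{\m_i}$ and with $V$ (or $A_{Q\cap A}\cap V$ in the intermediate case), and then deduce (v) either by a direct application of Proposition \ref{intersectionnagatarings} (when $A\cap V$ is local) or by the standard Nagata decomposition $(A\cap V)(t)=\bigcap_{\m\in\Max(A\cap V)}(A\cap V)_\m(t)$ combined with the identifications of (ii)--(iv).

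\textbf{Structural parts.} Note first that an element of $A\cap V$ is a unit precisely when it is a unit both in $A$ and in $V$, so the non-units form $\bigcup_{i=1}^s(\m_i\cap V)\cup(A\cap \m_V)$. For (i), the ``if'' direction follows because domination of $A$ by an overring $V'$ of $V$ forces $\m_A\cap V\subseteq \m_{V'}\cap V\subseteq \m_V$, collapsing the non-units to the single prime $A\cap\m_V$; the formulas $A=(A\cap V)_{\m_A}$ and the description of the residue ring as a valuation domain are then routine. For the ``only if'' direction, two distinct maximal ideals of $A$ would, by a CRT argument combined with Lemma \ref{semilocalunits}, produce two non-units of $A\cap V$ summing to $1$, contradicting locality of $A\cap V$. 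For (ii)--(iv) the inclusion of the localization of $A\cap V$ at the prescribed prime into the purported local overring is formal. The nontrivial direction requires, given a fraction $a/s$ with $s$ outside the prime, producing $w\in A\cap V$, still outside that prime, such that $wa,ws\in V$. Because $A\cap V$ is assumed non-local, the prime $\m_V\cap A$ is distinct from the contractions $\m_i\cap V$, and this separation furnishes an element of $(A\cap V)\cap\m_V$ avoiding $\m_i$; combining this with Lemma \ref{semilocalunits} to deal with the remaining maximals of $A$ simultaneously produces the required $w$. Part (iii) then follows because the absence of a common proper overring of $A$ and $V$ forces the residue field identification $(A\cap V)/(\m_V\cap A)=V/\m_V$, so that $(A\cap V)_{\m_V\cap A}=V$; part (iv) is the same argument with $V$ replaced by the minimal common overring $V_Q$.

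\textbf{Part (v).} When $A\cap V$ is local, (i) supplies an overring $V'$ of $V$ with $V'\supseteq A$ and $\m_A\subseteq\m_{V'}$; apply Proposition \ref{intersectionnagatarings} to $D=A\cap V$, $\mathcal{F}=\{A,V\}$, distinguished ring $D_1=V$ (so $s=1$ and the residue-field condition is trivial), and common overring $T=V'$ to get $(A\cap V)(t)=A(t)\cap V(t)$. When $A\cap V$ is not local, parts (ii)--(iv) identify all maximal ideals of $A\cap V$ together with the corresponding localizations, and the Nagata decomposition gives
\[
(A\cap V)(t)=\bigcap_{i=1}^s(A\cap V)_{\m_i\cap V}(t)\,\cap\,(A\cap V)_{\m_V\cap A}(t)=\bigcap_{i=1}^s A_{\m_i}(t)\,\cap\, X(t),
\]
where $X=V$ in case (iii) and $X=A_{Q\cap A}\cap V$ in case (iv). In case (iii) this is exactly $A(t)\cap V(t)$. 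In case (iv), $A_{Q\cap A}\cap V$ is local by (i) applied to the pair $(A_{Q\cap A},V)$ (since $V_Q$ dominates $A_{Q\cap A}$), so the local case of (v) yields $X(t)=A_{Q\cap A}(t)\cap V(t)$; since $A\subseteq A_{Q\cap A}$ we have $\bigcap_i A_{\m_i}(t)=A(t)\subseteq A_{Q\cap A}(t)$, so the factor $A_{Q\cap A}(t)$ is absorbed, yielding again $(A\cap V)(t)=A(t)\cap V(t)$.

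\textbf{Main obstacle.} The hardest step is the reverse inclusion in (ii) (and its variants in (iii), (iv)), where a denominator in $A\cap V$ with prescribed behavior at $\m_i$ and large $V$-valuation must be constructed simultaneously across all maximal ideals of $A$. It is precisely here that both the non-locality of $A\cap V$ and the residue-field cardinality hypothesis enter, through Lemma \ref{semilocalunits}.
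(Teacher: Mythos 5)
Your global architecture --- establish (i)--(iv) as a structural description of $A\cap V$, then obtain (v) from Proposition \ref{intersectionnagatarings} in the local case and from the decomposition of $(A\cap V)(t)$ over the maximal ideals of $A\cap V$ otherwise, absorbing the factor $A_{Q\cap A}(t)$ in case (iv) --- is exactly the paper's, and your part (v) would be fine if (i)--(iv) were in hand. The gaps are in the structural parts. First, your ``only if'' direction of (i) only argues that $A$ has a single maximal ideal; it never establishes that $A$ is \emph{dominated by an overring of $V$}, which is the substantive half of the claim and is what every ``In this case'' conclusion (and your own appeal to (i) in the local case of (v)) rests on. The paper gets this in two nontrivial steps: if some $z\in\m_i\setminus V$, then for a suitable unit $q$ the elements $\frac{z}{q+z}=\frac{1}{qz^{-1}+1}$ and $\frac{q}{z+q}$ are two non-units of $A\cap V$ with unit sum, so $\m_i\subseteq V$ for all $i$; then, for $s\in A\setminus V$, the containments $s^nx\in V$ for all $x\in\bigcup_j\m_j$ and all $n$ force $\bigcup_j\m_j\subseteq Q$ and $A\subseteq V_Q$ for a non-maximal prime $Q$ of $V$ (which simultaneously proves $A$ is local). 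A bare CRT argument does not substitute for this: the elements it produces need not lie in $V$, hence need not be non-units of $A\cap V$ at all.

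Second, your mechanism for the hard inclusion in (ii)--(iv) --- an element of $(A\cap V)\cap\m_V$ avoiding $\m_i$, combined with Lemma \ref{semilocalunits} --- does not produce the required denominator when $V$ has rank greater than one: if $v(z)$ is negative in a low convex subgroup of the value group while the separating element $w$ has value positive only in a higher one, then $v(w^nz)<0$ for every $n$, so no power of $w$ clears the denominator of $z$. The paper instead takes the specific element $u=\frac{1}{q+z}=\frac{z^{-1}}{qz^{-1}+1}$, which is at once a unit of $A$ and an element of $V$ whose value is exactly $-v(z)$; this is the step where the residue-field hypothesis is actually used. Relatedly, your justification of (iii) is a non sequitur: a residue-field identification at $\m_V\cap A$ does not imply $(A\cap V)_{\m_V\cap A}=V$. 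What is needed is, for each denominator $b$, a unit $u$ of $A$ with $v(u)=-v(b)$, and the ``no common proper overring'' hypothesis enters precisely to exclude the scenario in which all units of $A$ lie in a proper overring $V_Q$ (which would force $A\subseteq V_Q$). Without these constructions, (i)--(iv) are not proved and (v) inherits the gap.
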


\begin{proof}
We can assume $A \nsubseteq V$, otherwise all the results are obvious. The assumption on the residue fields of $A \cap V$ implies that if $z \in \m_i$ for some $i$, then there exists a unit $q \in A \cap V$ such that $z + q$ is a unit in $A$.  \\
(i) 
If $A$ is local and dominated by an overring of $V$, the implication follows by Lemma \ref{intersect1}. Conversely if $A \cap V$ is local we claim that $\m_i \subseteq V$ for every $i$. Indeed, if there exists $z \in \m_i \setminus V$, we get $z^{-1} \in \m_V \setminus A$. Choose a unit $q \in A \cap V$ such that $z + q$ is a unit in $A$. Observing that $$  \frac{z}{q+z} = \frac{1}{qz^{-1}+1} \in \m_i \cap (V \setminus \m_V), \quad \frac{qz^{-1}}{1+qz^{-1}} = \frac{q}{z+q} \in \m_V \cap (A \setminus \bigcup_{j=1}^s\m_j), $$ we find that $\frac{z}{q+z}$ and $\frac{q}{z+q}$ are two non-units of $A \cap V$ whose sum is a unit. This is a contradiction. Hence we have $\m_i \subseteq V$ for every $i$. Pick an element $s \in A \setminus V$. For every $n \geq 1$ and every $x \in \bigcup_{j=1}^s\m_j$ we get $s^nx \in V$, showing that the radical of $xV$ is properly contained in the radical of $s^{-1}V$. Therefore, there exists a non-maximal prime ideal $Q$ of $V$ such that $\bigcup_{j=1}^s\m_j \subseteq Q= QV_Q$ and $A \subseteq V_Q$. In particular the unique maximal ideal of $A$ is $\m_A= Q \cap A$ and $A$ is dominated by $V_Q$. 
Using that $\m_A \subseteq V$, we obtain that $\m_A$ is a prime ideal of $A \cap V$. For $a \in A \setminus V$, we have $a^{-1} \in (A \cap V) \setminus \m_A$, thus $A = (A \cap V)_{\m_A}$. If $x,y \in (A \cap V) \setminus \m_A $, then the fractions $ \frac{x}{y}, \frac{y}{x} $ are in $A$ and at least one of them is in $V$, implying that $ \frac{A \cap V}{\m_A} $ is a valuation domain.  \\
(ii) It is sufficient to show that $A\subseteq (A \cap V)_{\m_i \cap V}$. For $z \in A \setminus V$, if $z^{-1} \in A$ then $ z= \frac{1}{z^{-1}} \in (A \cap V)_{\m_i \cap V}. $ If $z $ is not a unit, pick again a unit $q \in A \cap V $ such that $q+z$ is a unit in $A$. Set $u = \frac{1}{q+z}$ and observe that $u$ is a unit of $A$ such that $uz = \frac{1}{qz^{-1}+1} \in V$ and $u \in V$. Hence $ z = \frac{zu}{u} \in  (A \cap V)_{\m_i \cap V}$. \\
(iii) For $x \in V \setminus A$, if $x^{-1} \in A$, we conclude that $x \in (A \cap V)_{\m_V \cap A}$ as in item (ii) by choosing $u= \frac{1}{1+qx}$ with $q$ a unit in $A \cap V$ such that $q+x^{-1}$ is a unit in $A$. Suppose that $x, x^{-1} \not \in A$ and write $x=\frac{a}{b}$ with $a, b $ both non-units in $A$ and $v(a) \geq v(b)$. %If $v(b)=0$, then clearly $x \in (A \cap V)_{\m_V \cap A}$. 
We want to find a unit $u$ of $A$ such that $v(u)= - v(b)$. In this way we can write $ x = \frac{au}{bu} \in  (A \cap V)_{\m_V \cap A}$.
%If $v(b) < 0$, set $u = \frac{1}{1+b}$. %which is a unit of $A$ and has value $v(u)= - v(b)$. It follows that $ x = \frac{au}{bu} \in  (A \cap V)_{\m_V \cap A}$. The only case we still need to consider is when $v(a)\geq v(b) > 0$. Assume then $v(b) > 0$. 

If there exists a unit $t$ of $A$ such that $v(t) > v(b)$, it is sufficient to set $u = \frac{1}{t+qb}$ where $q \in A \cap V $ is a unit such that $t+qb$ is a unit in $A$. Suppose by way of contradiction that there are no units of $A$ with value larger than or equal $v(b)$. This implies that $v(b) > 0$ (because $1$ is a unit of both $A$ and $V$). By assumption on the residue fields, every element of $A$ can be expressed as the sum of at most two units, thus, since $A \nsubseteq V$, there exists some unit $s$ of $A$ such that $v(s) < 0$.
%By (i), $A \cap V$ is not local and thus there exists some unit $s$ of $A$ such that $v(s) < 0$. 
For every such $s$, we get that the radical of $s^{-1}V$ properly contains the radical of $bV$. In particular all the units of $A$ are contained in a proper overring $V_Q$ of $V$. %But every element of $\m_A$ is a sum of two units, thus $A \subseteq V_Q$. 
This forces $A \subseteq V_Q$. This is a contradiction. \\
(iv) Clearly $V_Q$ dominates $A_{Q \cap A}$. By Lemma \ref{intersect1}, $A_{Q \cap A} \cap V$ is local and contains $A \cap V$. In particular it contains the localization $(A \cap V)_{\m_V \cap A}. $ For $x \in A_{Q \cap A} \cap V$ we argue as we did for item (iii) to show that $ x = \frac{au}{bu} \in  (A \cap V)_{\m_V \cap A}$. Now we can choose $a, b $ non-units in $A$ such that $b \not \in Q \cap A$. By the choice of $Q$, there exists some unit $s$ of $A$ such that the radical of $sV$ is the unique prime ideal $Q'$ of $V$ such that $\hgt(Q') = \hgt(Q)+1$. Hence, there exists a unit $t$ of $A$ such that $v(t) > v(b)$. We can conclude using the same proof as for item (iii). \\
(v) We consider different cases. If $A \cap V$ is local, by item (i), $A$ is dominated by a valuation overring of $V$. The thesis follows now by Proposition \ref{intersectionnagatarings}.
If $A \cap V$ is not local and $A$ and $V$ have no common overrings, by items (ii)-(iii), all the rings $A_{\m_i}$ and $V$ are precisely the localizations of $A \cap V$ at the maximal ideals. The thesis follows since Nagata ring extension commutes with localizations at the maximal ideals.
If instead we are in the situation described by item (iv), we use item (ii) and localization to say that $(A \cap V)(t)= A(t) \cap (A_{Q \cap A} \cap V)(t)$. Using that $ A_{Q \cap A} \cap V $ is local we can split further $(A_{Q \cap A} \cap V)(t) = A_{Q \cap A}(t) \cap V(t)$ and conclude since $A \subseteq A_{Q \cap A}$.
\end{proof}

Also about Theorem \ref{VcapA}, we ask whether the same results hold removing the assumption on the cardinality of the residue fields.
We conclude this section by showing that under some mild conditions, Nagata ring extension does not commute with intersection also in the non-integrally closed case. Various constructions of this kind will be analyzed in the next section (see Constructions \ref{construction} and \ref{construction2}).

\begin{prop}
\label{notnagata}
Let $D$ be a local integral domain and let
$\mathcal{F}= \lbrace D_i \rbrace_{i \in \Lambda} $ be a defining family of $D$. %Let $\mathcal{F}$ be a collection of local proper overrings of $D$ such that $\bigcap_{A \in \mathcal{F}} A = D$. %and \rm $\mathcal{F}\setminus \mbox{Zar}(D)$ \it is finite.
 %Set $T:= \bigcap_{V \in \mathcal{F}\cap Zar(D)} V$ and $B:= \bigcap_{A \in \mathcal{F}\setminus Zar(D)} A $.
 If there exists $z \in \mathcal{Q}(D)$ such that $z, z^{-1} \not \in D$ and for every $i$, $z \in D_i$ or $z^{-1} \in D_i$, then $D(t) \neq \bigcap_{i \in \Lambda} D_i(t).$
 In particular if $D = T \cap A$ with $T$ integrally closed, and there exists $z \in A $ such that $z, z^{-1} \not \in D$. 
Then $ D(t) \neq \bigcap_{V \supseteq T} V(t) \cap A(t)$.
\end{prop}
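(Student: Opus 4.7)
The plan is to exhibit an element of $\bigcap_{i \in \Lambda} D_i(t)$ that does not lie in $D(t)$. A natural candidate is $\phi := \frac{1}{1+zt} \in \mathcal{Q}(D)(t)$. For the \emph{membership} half, I check $\phi \in D_i(t)$ for each $i$ by cases on the alternative $z \in D_i$ or $z^{-1} \in D_i$. If $z \in D_i$, then $1+zt \in D_i[t]$ has content $(1,z)D_i = D_i$, hence is a unit of $D_i(t)$, so $\phi \in D_i(t)$. If instead $z^{-1} \in D_i$, I rewrite $\phi = \frac{z^{-1}}{z^{-1}+t}$; the denominator lies in $D_i[t]$ with content $(z^{-1},1)D_i = D_i$, so is a unit in $D_i(t)$, while the numerator $z^{-1}$ lies in $D_i$.

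The harder half is to show $\phi \notin D(t)$. Assume to the contrary that $\phi = p/q$ with $p,q \in D[t]$ and $c(q)=D$; then $q = (1+zt)p$ in $\mathcal{Q}(D)[t]$. Writing $p = \sum_{i=0}^m p_i t^i$, the coefficients of $q$ are $q_0 = p_0$, $q_i = p_i + z p_{i-1}$ for $1 \leq i \leq m$, and $q_{m+1} = z p_m$. The requirement $q \in D[t]$ forces $zp_i \in D$ for every $i$, so each $p_i$ lies in the conductor $(D :_D z) = \{d \in D : dz \in D\}$; since $z \notin D$ and $D$ is local, this conductor is contained in $\m_D$, whence $p_i \in \m_D$ for every $i$. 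On the other hand, $c(q) = D$ together with locality of $D$ forces some coefficient $q_k$ to be a unit of $D$, and I would rule out each possibility: $q_0 = p_0 \in \m_D$ is not a unit; for $1 \leq k \leq m$, the identity $q_k = p_k + zp_{k-1}$ with $p_k \in \m_D$ forces $zp_{k-1}$ to be a unit of $D$, yielding $z^{-1} = p_{k-1}(zp_{k-1})^{-1} \in D$, a contradiction; the same argument applied to $q_{m+1} = zp_m$ eliminates $k = m+1$. This contradicts the existence of $p,q$.

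For the ``in particular'' clause, I take the defining family $\mathcal{F} = \{A\} \cup \{V : V \text{ is a valuation overring of } T\}$. Since $T$ is integrally closed, $T = \bigcap_{V \supseteq T} V$, so $\bigcap_{D_i \in \mathcal{F}} D_i = A \cap T = D$. Each valuation ring $V$ in the family automatically satisfies $z \in V$ or $z^{-1} \in V$, while $z \in A$ by hypothesis; thus the hypothesis of the first half applies, yielding $D(t) \neq A(t) \cap \bigcap_{V \supseteq T} V(t)$. I expect the main obstacle to be the contradiction argument of the second paragraph, which is the only step that essentially combines the locality of $D$, the two-sided exclusion $z, z^{-1} \notin D$, and the interaction between the content condition on $q$ and the conductor constraint on the coefficients of $p$.
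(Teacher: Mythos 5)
Your proof is correct and follows essentially the same route as the paper's: the paper uses the test element $\frac{1}{t+z}=\frac{z^{-1}}{z^{-1}t+1}$, which is your $\frac{1}{1+zt}$ up to reversing the coefficients, with the identical two-case membership check and the same locality/conductor contradiction for non-membership. If anything, your second paragraph is more carefully justified than the paper's, since you rule out \emph{every} representation $p/q$ with $c(q)=D$ rather than arguing from the single representation $\frac{d}{dt+dz}$.
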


\begin{proof}
 %Pick $z \in B \setminus D$ such that also $z^{-1} \not \in D$. 
 Consider 
 $$ \phi= \frac{1}{t+z} = \frac{z^{-1}}{z^{-1}t+1}.  $$ Clearly $\phi \in \bigcap_{i \in \Lambda} D_i(t)$. %$F \in A(t) \cap R$ since every valuation overring of $T$ contains at least one element among $z$ and $z^{-1}$.
 Let us show that $\phi \not \in D(t)$. We can multiply by a common factor to have both numerator and denominator of $\phi$ inside $D$. Hence pick any $d \in D$ such that $dz \in D$ and write $\phi= \frac{d}{dt+dz}$. If $\phi \in D(t)$, since $D$ is local we would have that either $d$ or $dz$ is a unit in $D$. But $d$ cannot be a unit, since $dz \in D$ and $z \not \in D$. If $dz$ were a unit, we would have $z^{-1} = d (dz)^{-1} \in D$. In any case this leads to a
 %one among $(dz)^{-1}$ and $(d)^{-1}z$ is in $D$. By multiplying $d$ this implies that $z$ or $z^{-1}$ is in $D$. A
  contradiction.
 %Now notice that $F \in V(t)$ for every valuation overring $V$ of $D$ since 
 %$$ F= \frac{1}{z+z^{-1}t}= \frac{z}{z^2+t} = \frac{z^{-1}}{1+z^{-2}t}  $$ and one among $z$ and $z^{-1}$ is in $V$. Also $F \in B(t)$ since $z \in B$. Thus $\bigcap_{A \in \mathcal{F}} A(t) \neq D(t)$.
% \bf what happens if $T$ is semilocal Pr\"{u}fer and $D$ contains sufficiently many independent units \rm
 %We can take a valuation overring $V \in \mathcal{F}$ which dominates $A$ (if there is not we can add it to $\mathcal{F}$ without changing the intersections $ \bigcap_{A \in \mathcal{F}} A $ and $\bigcap_{A \in \mathcal{F}} A(t)$, in case replace also $T$ by the new intersection of valuation rings in $ \mathcal{F} $).
 %We have that $T(t)= \bigcap_{V \in \mathcal{F}\cap Zar(D)} V(t)$. Hence there exists $u,v$ units of $D$ and $t \in T$ such that $\frac{1}{t}(udz+vdz^{-1})$ is a unit of $T$ and $\frac{dz}{t}$,$\frac{dz^{-1}}{t} \in T$. Following the proof of Proposition \ref{intersectionnagatarings}, we get that $\frac{d}{udz+vdz^{-1}}$, $\frac{dz}{udz+vdz^{-1}}$, $\frac{dz^{-1}}{udz+vdz^{-1}} \in D$. Since $D$ is local one of the two has to be a unit. Say that $\frac{dz^{-1}}{udz+vdz^{-1}} $ is a unit.
%We can cancel $d$ to get $\frac{1}{uz+vz^{-1}} \in D$. Multiplying $ \frac{1}{uz+vz^{-1}} \frac{uz+vz^{-1}}{z^{-1}}= z \in D. $
\end{proof}

\begin{remark}
For a local domain $D$ that is not a valuation ring and an overring $A$, 
the existence of an element $z \in A$ such that $z, z^{-1} \not \in D$ is usually satisfied. Indeed, such an element $z$ does not exist if and only if $A = D_Q$ for a prime ideal $Q$ of $D$ such that $Q=QD_Q$, and $\frac{D}{Q}$ is a valuation ring. For this pick $z \in A$ not a unit and observe that $z \in \m_D$. Thus for $x, y$ not units in $A$, if $x+y$ was a unit in $A$, we would get $1 = \frac{x}{x+y}+\frac{y}{x+y} \in \m_D$, which is impossible and implies that $A$ is local. 
The maximal ideal of $A$ is equal to a prime ideal $Q$ of $D$ which forces $Q=QD_Q$ and $A=D_Q$. Finally, if $u, v$ are element of $D \setminus Q$, then $\frac{u}{v}, \frac{v}{u}$ are units in $A$ and at least one of them has to be in $D$, implying that $\frac{D}{Q}$ is a valuation domain. Notice that by Theorem \ref{VcapA}-(i), a domain $D$ of this form may arise as intersection $D= A \cap V$ for some valuation ring $V$ which has an overring dominating $A$.
\end{remark}

The above remark suggests an example of local domain $D$ not maximal excluding, such that $\overline{D}$ is not Pr\"{u}fer, but still we have $Kr^{\mathcal{F}}(D)=D(t)$ for some defining family $\mathcal{F}$. However, Nagata ring extension clearly does not commute with intersection for all the overrings of $D$.

\begin{example}
\label{ex41}
Let $T$ be the maximal excluding generalized power series ring of Example \ref{ex3}, defined over the field $K= \mathbb{Q}$. Set $D= \Z_{(p)}+ \m_T$ for a prime number $p$. The integral closure of $D$ is $\overline{D}= \Z_{(p)}+ \m_{\overline{T}}$ and it is not Pr\"{u}fer since $\overline{T}$ is not. Also $D$ is not maximal excluding by Theorem \ref{pullbacksquare} because $T$ is not a valuation ring. We can write $D = \overline{D} \cap T$ and consider the defining family $\mathcal{F}= \Zar(D) \cup \lbrace T \rbrace$, setting $Kr^{\mathcal{F}}(D) = Kr(\overline{D}) \cap T(t)$. %Both $\overline{D}$ and $T$ are local and contained in $\overline{T}$, which is also local and dominates $T$. By Proposition \ref{intersectionnagatarings}, $D(t)= Kr(\overline{D}) \cap T(t) = Kr^{\mathcal{F}}(D)$.
Let $V$ be a valuation overring of $D$ such that $\m_{V}=pV$. Observing that there are no rings properly contained between $D $ and $ T$ and $p^{-1} \in T \setminus V$, we get $V \cap T = D$ (of course $T$ is dominated by an overring of $V$, as expected by Theorem \ref{VcapA}-(i)). 
Now, Theorem \ref{main2}-(i) from the next section applies to this setting to show that the localization of $Kr^{\mathcal{F}}(D)$ at the center of $V(t)$ is equal to $D(t)$. Thus $Kr^{\mathcal{F}}(D) = D(t)$.
\end{example}

We leave another open question.

\begin{question}
Is it possible to find an integral domain $D$ not maximal excluding, such that $\overline{D}$ is not Pr\"{u}fer but $Kr^{\mathcal{F}}(D)=D(t)$ for every defining family $\mathcal{F}$ of $D$? Is this true for the ring $D$ defined in Example \ref{ex41}?
\end{question}

%\begin{theorem}
%\label{semifinitefamily}
%\bf I cannot prove this at the moment \rm
%Let $D$ be an integral domain. Let $\mathcal{F}$ be a collection of local proper overrings of $D$ such that $\bigcap_{A \in \mathcal{F}} A = D$ and \rm $\mathcal{F}\setminus \mbox{Zar}(D)$ \it is finite.
 %Set $T:= \bigcap_{V \in \mathcal{F}\cap Zar(D)} V$ and $B:= \bigcap_{A \in \mathcal{F}\setminus Zar(D)} A $.
 %Suppose that $D \subsetneq B$.
%The following conditions are equivalent:
%\begin{enumerate}
%\item[(i)] $\bigcap_{A \in \mathcal{F}} A(t) = D(t)$.
%\item[(ii)] $T$ is a Pr\"{u}fer domain.
%\end{enumerate} 
%\end{theorem}

%\begin{proof}
%In the case where $D=T$, the result follows immediately from the theory of Kronecker function rings (see...). 
%\end{proof}

\section{Constructions of non-integrally closed Kronecker function rings}

In this section we construct non-integrally closed rings of the form $Kr^{\mathcal{F}}(D)$, according to the notation of Definition \ref{defkron}. The two main questions that we investigate for such rings are: understanding what the integral closure is and studying if they behave locally like classical Kronecker function rings, in the sense that the localizations at maximal ideals are Nagata ring extensions of some overring of the base ring $D$. We immediately observe that in general the integral closure of $Kr^{\mathcal{F}}(D)$ may not coincide with the Kronecker function ring $Kr(\overline{D})$. Indeed we have:

%\bf main questions: when the integral closure of a Kronecker ring $Kr^{\mathcal{F}}(D)$ is equal to $Kr(\overline{D})$? are the localizations of $Kr^{\mathcal{F}}(D)$ at prime ideals all Nagata rings? \rm
\begin{remark}
\label{rem51}
Let $D$ be an integral domain, $\mathcal{F}$ a defining family for $D$ and set $R= Kr^{\mathcal{F}}(D)$. Suppose that $ \overline{R}= Kr(\overline{D}). $ Then the integral closure of every $A \in \mathcal{F}$ is a Pr\"{u}fer domain. For this simply recall that $Kr(\overline{D})$ is Pr\"{u}fer and integral closure commutes with Nagata ring extension.
\end{remark}

In the following we first consider cases where the integral closure of $Kr^{\mathcal{F}}(D)$ is equal to $Kr(\overline{D})$. Then, recalling that the integral closure of a maximal excluding domain may not be a Pr\"{u}fer domain, we give in Theorem \ref{main2} and Example \ref{ex54}, examples such that the integral closure is a proper non-Pr\"{u}fer subring of $Kr(\overline{D})$.

Regarding the local behavior of $Kr^{\mathcal{F}}(D)$, we find that in all our constructions, the localizations at the maximal ideals are Nagata ring extensions of overrings of $D$. For this reason we leave the following general question for further research:

\begin{question}
Let $D$ be an integral domain and let $\mathcal{F}$ be any defining family of $D$. Let $\p$ be a maximal ideal of the ring $R=\bigcap_{A \in \mathcal{F}} A(t)$. Is $R_{\mathfrak{p}}= C(t)$ for some overring $C$ of $D$?
\end{question}

Our first construction is based on integral domains whose integral closure is obtained by adding the generators of a finite algebraic (Galois) field extension. We restrict to work with $D$ a local domain, since one can always reduce from the global case to the local one by localizing at each maximal ideal.

\begin{construction}
\label{construction} \rm
Let $D$ be a local domain with quotient field $F$. Let $F'$ be a subfield of $F$ such that $\mathbb{Q} \subseteq F'$ and $F$ is a finite Galois extension of $F'$, generated as $F'$-vector space by $\theta_1=1, \theta_2, \ldots, \theta_n$. Call $K$ the field generated over $\mathbb{Q}$ by $\theta_1, \theta_2, \ldots, \theta_n$ and all their conjugates with respect to the action of Gal$(F/F')$.
We assume any intermediate field extension $E/ E'$ such that $F' \subseteq E' \subseteq E \subseteq F$ to be a Galois extension generated by $\vartheta_1, \ldots, \vartheta_m$ and satisfying one of the following condition (see Examples \ref{ex51} and \ref{ex52} for examples): 
\begin{itemize}
\item[($\ast_1$)] The field $K$ %$\mathbb{Q}(\vartheta_1, \ldots, \vartheta_m)$ 
is contained in $ \overline{D}$.
\item[($\ast_2$)] $\lbrace \vartheta_1, \ldots, \vartheta_m \rbrace = \lbrace 1, \vartheta, \vartheta^2, \ldots, \vartheta^{m-1} \rbrace$ where $\vartheta$ is a simple root of degree $m $ over $E'$ and $\overline{D}$ contains
$\xi \vartheta^m $ for every $m$-th root of unity $\xi$.
\end{itemize}
%any intermediate field extension $E/ E'$ such that $F' \subseteq E' \subseteq E \subseteq F$ is a Galois extension generated by $\vartheta_1, \ldots, \vartheta_m$ such that: 
%either $\vartheta_1, \ldots, \vartheta_m = 1, \vartheta, \vartheta^2, \ldots, \vartheta^{n-1}$ are simple roots of degree $m $ over $E'$ and $\overline{V}^{F}$ contains all the $m$-th roots of unity, or the normal closure of $\mathbb{Q}(\vartheta_1, \ldots, \vartheta_m)$ is contained in $ \overline{V}^{F}$.
%  $(\ast)$ if $V$ is a valuation ring with quotient field $F'$ such that $\theta_1, \ldots, \theta_n$ are integral over $V$, then any element $ \sum_{j=1}^n a_j \theta_j \in F$ with $a_j \in F'$ is integral over $V$ if and only if each summand $a_j\theta_j$ is integral over $V$. 
Suppose there exists a local integrally closed domain $D'$ containing $\Q$, having quotient field $F'$ such that $\m_{D'} \subseteq \m_{D}$, $D= D'[\m_D]_{\m_D}$, and
$\overline{D'}^F= \overline{D} = D[\theta_1, \ldots, \theta_n]$. % D[\theta_1, \ldots, \theta_n]_{M} where $M$ is a maximal ideal of $ D[\theta_1, \ldots, \theta_n] $, and $M \supseteq \m_D \supseteq \m_{D'}$.

Given a valuation overring $V$ of $D'$ such that $\m_V \supseteq \m_{D'}$, define $A_V:= V[\m_D]_{(\m_{V}, \m_D)}$. Clearly $D \subseteq \bigcap_{V \supseteq D'} A_V \subseteq \overline{D} $. Suppose that $\bigcap_{V \supseteq D'} A_V = D$, and in that case define $\mathcal{F}$ to be the defining family of $D$ containing all the maximal excluding overrings $A$ such that $A \supseteq A_V$ for some $V$ valuation overring of $D'$. Set $R:=Kr^{\mathcal{F}}(D)$.
\end{construction}

The main examples of rings $D$ that we can obtain from Construction \ref{construction} are certain kinds of $K+\m$ constructions and of algebras over fields, as shown in the next examples.

\begin{example}
\label{ex51}
% Let $\overline{D}= K+ \m$ be an integrally closed domain and let $D= K' + \m$ with $K$ a finite Galois extension of $K'$ and $\mathbb{Q} \subseteq K'$. Let $F'$ be the largest subfield of $F= \mathcal{Q}(D)$ not containing any element of $K \setminus K'$, and set $D'= D \cap F' = K'+ \mathfrak{n}$ with $\mathfrak{n}= \m \cap F'$. Notice that $$l
Let $D'= K'+ \mathfrak{n}$ be an integrally closed domain with quotient field $F'$, such that $\mathbb{Q} \subseteq K'$. Let  $K$ be a finite Galois extension of $K'$ generated as $K'$-vector space by $\theta_1, \ldots, \theta_n $ and such that $K \cap F' = K'$. Call $F$ the finite Galois extension of $F'$ generated by the same elements. Then define $D= K' + \m$ with $\m = \sum_{i=1}^n \theta_i \mathfrak{n}, $ and observe that
$\overline{D}= \overline{D'}^F=  K+ \m$. %be an integrally closed domain and let $D= K' + \m$ with $K$ a finite Galois extension of $K'$ and $\mathbb{Q} \subseteq K'$. Let $F'$ be the largest subfield of $F= \mathcal{Q}(D)$ not containing any element of $K \setminus K'$, and set $D'= D \cap F' = K'+ \mathfrak{n}$ with $\mathfrak{n}= \m \cap F'$. Notice that $ $
Clearly the field extension $F/F'$ satisfies condition ($\ast_1$). By construction the rings $A_V$ do not contain any element of $K \setminus K'$. Therefore $\bigcap_{V \supseteq D'} A_V = D$.
%If $\theta_1, \ldots, \theta_n$ are the generators of $K$ over $K'$, then clearly $\mathbb{Q}(\theta_1, \ldots, \theta_n) \subseteq  K'(\theta_1, \ldots, \theta_n) = K \subseteq \overline{D}$.
\end{example}

\begin{example}
\label{ex52}
Consider a $K$-algebra of the form $D= \sum_{i=1}^c K[[x_i^{e_{1i}}, \ldots, x_i^{e_{l_ii}}]]$ (or the corresponding localized polynomial ring version), where $x_1, \ldots, x_c$ are indeterminates over $K$, the exponents $1 \leq e_{1i} < \ldots < e_{l_ii}$ generate a numerical semigroup, and
$K $ contains the $e_{1i}$-th cyclotomic field for all $i=1, \ldots, c$. In this case $\overline{D}= K[[x_1, \ldots, x_c]] $ and $D'= K[[x_1^{e_{11}}, \ldots, x_c^{e_{1c}}]]. $  The field extension $F/F'$ satisfies condition ($\ast_2$).
To see that $\bigcap_{V \supseteq D'} A_V = D$, observe that if $V$ is a valuation overring of $D'$ (with $\m_V \supseteq \m_{D'}$) such that $x_1^{e_{11}}$ has infinitely larger value than all the other $ x_i^{e_{1i}} $, then all the elements of the form $x_1^s f(x_2, \ldots, x_c) $ are not in $A_V$ if $s $ is not in the semigroup generated by $e_{11}, \ldots, e_{l_11}$. Similarly one can show that each element of $\overline{D} \setminus D$ does not belong to some ring $A_V$.
Simple examples that one may consider in this family are rings such as $K[[x^2,x^3,y]]$ and $K[[x^2,x^3,y^2,y^3]]$.
\end{example}

%\begin{remark}
%\label{remex}
% 

 % \overline{V}^{F}$ for every $V$ dominating $D'$. 

%In all the previous cases condition $(\ast)$ is satisfied by Proposition \ref{fieldextensions}.

\begin{lemma}
\label{fieldextensions}
%Suppose that any intermediate field extension $E/ E'$ such that $F' \subseteq E' \subseteq E \subseteq F$ is a Galois extension generated by $\vartheta_1, \ldots, \vartheta_m$ such that: either $\vartheta_1, \ldots, \vartheta_m = 1, \vartheta, \vartheta^2, \ldots, \vartheta^{n-1}$ are simple roots of degree $m $ over $E'$ and $\overline{V}^{F}$ contains all the $m$-th roots of unity, or the normal closure of $\mathbb{Q}(\vartheta_1, \ldots, \vartheta_m)$ is contained in $ \overline{V}^{F}$. Then the field extension $F' \subseteq F$ satisfies condition $(\ast)$.
Take the assumptions and notations of Construction \ref{construction}. Let $V$ be a valuation overring of $D'$. %such that $\theta_1, \ldots, \theta_n$ are integral over $V$. 
Then any element $ \sum_{j=1}^n a_j \theta_j \in F$ with $a_j \in F'$ is integral over $V$ if and only if each summand $a_j\theta_j$ is integral over $V$. 
\end{lemma}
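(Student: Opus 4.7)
The plan: the $(\Leftarrow)$ direction is immediate, since a sum of $V$-integral elements is $V$-integral. For $(\Rightarrow)$, I begin by observing that each $\theta_j \in \overline{D}$ is integral over $D' \subseteq V$, and that the integral closure $W$ of $V$ in $F$ is stable under $G = \mathrm{Gal}(F/F')$ because $V \subseteq F'$ is pointwise fixed by $G$. Consequently every Galois conjugate of the integral element $\alpha = \sum_j a_j \theta_j$ remains in $W$. I then split into the two cases for $F/F'$ allowed by the hypothesis.

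If $F/F'$ satisfies $(\ast_2)$, the extension is Kummer: $F = F'[\vartheta]$ with $\vartheta^n \in F'$, $G = \langle \sigma \rangle$ cyclic, and $\sigma \colon \vartheta \mapsto \xi \vartheta$ for a primitive $n$-th root of unity $\xi \in F$. Fourier inversion over $G$ gives
\[
n\, a_j \vartheta^{j-1} \;=\; \sum_{k=0}^{n-1} \xi^{-(j-1)k}\, \sigma^k(\alpha),
\]
which lies in $W$ since each $\sigma^k(\alpha) \in W$ and each $\xi^{-(j-1)k}$ is a root of unity, hence integral over $\mathbb{Z} \subseteq V$. Because $\mathbb{Q} \subseteq D' \subseteq V$, we have $1/n \in V$, and so $a_j \vartheta^{j-1}$ itself is integral over $V$.

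If $F/F'$ satisfies $(\ast_1)$, the key observation I would exploit is that $K_0 := K \cap F'$ is contained in $\overline{D} \cap F' = \overline{D'}^F \cap F' = D'$, hence in $V$, and is a \emph{subfield} of $V$, so every nonzero element of $K_0$ is a unit in $V$. Standard compositum theory ($F = F'\cdot K$ with $K$ being $G$-stable) then gives $K/K_0$ Galois with group $G$ and $\{\theta_j\}$ a $K_0$-basis of $K$. I would form the matrix $M = (\sigma(\theta_j))_{\sigma,j}$, whose entries lie in $K \subseteq W$; its determinant $d$ lies in $K$ and is $G$-invariant up to sign, so $d^2 \in K \cap F' = K_0$ is a nonzero element of the field $K_0$, i.e.\ a unit in $V$. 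Applying Cramer's rule to $M \vec{a} = (\sigma(\alpha))_{\sigma \in G}$ yields $d \vec{a} = \mathrm{adj}(M) \cdot (\sigma(\alpha))_{\sigma \in G} \in W^n$, and multiplying by $d \in W$ gives $d^2 \vec{a} \in W^n$, so $\vec{a} \in W^n$. Since $a_j \in F' \cap W = V$, each $a_j \theta_j$ lies in $V \cdot K \subseteq W$, i.e.\ is integral over $V$.

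The main obstacle is case $(\ast_1)$: without the hypothesis $K \subseteq \overline{D}$, the discriminant $d^2$ of the basis would generally sit in $F'$ without being a unit in $V$, and Cramer's rule would fail to produce integrality of the $a_j$. The content of $(\ast_1)$ is exactly that it forces $d^2$ into the subfield $K_0 \subseteq V$, resolving this ramification-type obstruction.
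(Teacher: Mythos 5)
Your proof is correct and follows essentially the same route as the paper's: both exploit that the integral closure $W=\overline{V}^{F}$ contains all Galois conjugates of $\alpha$ and then invert the matrix $(\sigma_k(\theta_j))$ — by character-sum/Vandermonde inversion in case $(\ast_2)$ and by Cramer's rule in case $(\ast_1)$ — using the hypotheses to guarantee the inverting coefficients lie in $W$. The only cosmetic differences are that you solve for the $a_j$ and note that the discriminant $d^2$ is a unit via $K\cap F'\subseteq V$, whereas the paper solves directly for the $a_j\theta_j$ and simply uses that the field $K$ sits inside $\overline{V}^{F}$.
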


\begin{proof}
Let $\sigma_1, \ldots, \sigma_n$ be the elements of Gal$(F/F')$. By restricting each time to the automorphisms with fixed field $E'$, we can reduce to prove this result only for the main extension $F/ F'$, assuming that it satisfies either condition $(\ast_1)$ or condition $(\ast_2)$. 

Set $B_V:=\overline{V}^{F}.$
% assuming that is either generated by simple roots or that $F' \subseteq \overline{D}$. 
%Let $V$ be a valuation ring with quotient field $F'$ and suppose $\theta_1, \ldots, \theta_n$ to be integral over $V$. 
Let $\alpha=  \sum_{j=1}^n a_j \theta_j \in B_V$ with $a_j \in F'$. Since $F/ F'$ is a Galois extension, it is well-known that the integral closure of $V$ in $F$ is the intersection of all the extensions of $V$ to the field $F$, and such extensions are all conjugates by the elements of Gal$(F/F')$ (see for instance \cite[Section 3.2]{valuedfields}).  Therefore, also all the conjugates of $\alpha,$ $\sigma_i(\alpha)$ are elements of $ B_V. $ It follows that any linear combination of $\sigma_1(\alpha), \ldots, \sigma_n(\alpha)$ with coefficients in $B_V$ is in $ B_V. $ 
Now, set up a linear square system to express $a_j \theta_j$ as a linear combination of $\sigma_1(\alpha), \ldots, \sigma_n(\alpha)$. The equations we obtain are of the form $ a_j \theta_j = a_j \sum_{k=1}^n b_k \sigma_k(\theta_j)$ and $0=a_i\sum_{k=1}^n b_k \sigma_k(\theta_i)$ for $i \neq j$. These can be solved independently of $a_1, \ldots, a_n$ for a choice of coefficients $b_1, \ldots, b_n$ in the field $K$, which is generated over $\mathbb{Q}$ by the elements $\sigma_i(\theta_k)$. Assuming condition $(\ast_1)$, we have that $ K \subseteq \overline{D} \subseteq B_V $. The square matrix defining the linear system is $$ \bmatrix 
1 & 1 & \ldots & 1 \\ 
\theta_2 & \sigma_2(\theta_2) & \ldots & \sigma_n(\theta_2) \\
\vdots & \vdots & \ldots & \vdots \\
\theta_n & \sigma_2(\theta_n) & \ldots & \sigma_n(\theta_n)
\endbmatrix.
$$
The determinant of this matrix lives in $K$ and is equal to the square root of the discriminant of the field extension $F/F'$. This extension is separable, hence the discriminant is nonzero. Since $ K \subseteq B_V $, the system can be solved over $B_V$ by Cramer's rule yielding $a_j \theta_j \in B_V$ for every $j$.
%Using that the discriminant of the field extension $F/ F'$ is nonzero (\bf this should work for separable extensions since the minimal polynomial has no multiple roots, find a reference \rm) and solving a linear square system by Cramer's rule, we can express any element $a_j \theta_j$ as linear combination of $\sigma_1(\alpha), \ldots, \sigma_n(\alpha)$ with coefficients in $\mathbb{Q}(\theta_1, \ldots, \theta_m)$. In the case where $\mathbb{Q}(\theta_1, \ldots, \theta_n) \subseteq \overline{D} \subseteq B_V$, this yields $a_j \theta_j \in B_V$ for every $j$. %\mathbb{Q}(\vartheta_1, \ldots, \vartheta_m)
In case condition $(\ast_2)$ is satisfied, the field $F$ is generated over $F'$ by simple roots, and by hypothesis we have $\theta_j= \theta_2^{j-1}$ for $j=1, \ldots, n$. Moreover, the $n$-th cyclotomic field $\mathbb{Q}(\xi)$ is contained in $ \overline{D}$, hence in $B_V$. To solve the same linear system as above with respect to $ a_j \theta_j $, we reduce to solve a linear square system in $\mathbb{Q}(\xi)$ whose matrix is a Vandermonde matrix in the entries $1, \xi, \ldots, \xi^{n-1}$. Since $\mathbb{Q}(\xi) \subseteq B_V$ we again obtain $a_j \theta_j \in B_V$ for every $j$.
\end{proof}

The next lemma gives us a sufficient condition to have a localization of an overring of $D(t)$ equal to the Nagata ring extension of an overring of $D$.

\begin{lemma}
\label{localizationlemma}
Let $D$ be an integral domain and let $A$ be a local overring. Suppose $R$ to be an integral domain such that $D(t) \subseteq R \subseteq A(t)$. %Let $V$ be a valuation overring of $D$ and  $D(t) \subseteq R \subseteq \overline{R}= Kr(\overline{D})$.
%Suppose that $ R \subseteq A(t)$ and 
Let $\mathfrak{p}= \mathfrak{m}_{A(t)} \cap R$ and suppose $A \subseteq R_{\mathfrak{p}}$. Then $R_{\mathfrak{p}} = A(t)$. 
\end{lemma}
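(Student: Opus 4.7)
The plan is to exploit the fact that $A(t)$ is local: since $A$ is local with maximal ideal $\mathfrak{m}_A$, the extension $\mathfrak{m}_A A(t) = \mathfrak{m}_{A(t)}$ is the unique maximal ideal of $A(t)$, as recalled in Section 2. This gives tight control over where non-units of $A(t)$ live, which is exactly what is needed to compare $R_{\mathfrak{p}}$ with $A(t)$.

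First I would verify that $R_{\mathfrak{p}} \subseteq A(t)$. Any element of $R_{\mathfrak{p}}$ has the form $r/s$ with $s \in R \setminus \mathfrak{p} = R \setminus \mathfrak{m}_{A(t)}$; such an $s$ is a unit in the local ring $A(t)$, so $r/s \in A(t)$. Next I would identify the maximal ideal of $R_{\mathfrak{p}}$ as $\mathfrak{m}_{A(t)} \cap R_{\mathfrak{p}}$. The containment $\mathfrak{p}R_{\mathfrak{p}} \subseteq \mathfrak{m}_{A(t)} \cap R_{\mathfrak{p}}$ is clear from $\mathfrak{p} = \mathfrak{m}_{A(t)} \cap R$, and conversely, if some $x \in R_{\mathfrak{p}} \cap \mathfrak{m}_{A(t)}$ were a unit in $R_{\mathfrak{p}}$, then $x^{-1}$ would lie in $R_{\mathfrak{p}} \subseteq A(t)$, contradicting $x \in \mathfrak{m}_{A(t)}$.

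For the reverse inclusion $A(t) \subseteq R_{\mathfrak{p}}$, take $\phi = f/g \in A(t)$ with $f,g \in A[t]$ and $c(g) = A$. The hypothesis $A \subseteq R_{\mathfrak{p}}$ combined with $t \in D(t) \subseteq R \subseteq R_{\mathfrak{p}}$ yields $A[t] \subseteq R_{\mathfrak{p}}$, so $f, g \in R_{\mathfrak{p}}$. The condition $c(g) = A$ is, by definition of the Nagata ring, exactly what makes $g$ a unit of $A(t)$; in particular $g \notin \mathfrak{m}_{A(t)}$. Applying the identification from the previous step gives $g \notin \mathfrak{p}R_{\mathfrak{p}}$, so $g$ is a unit in $R_{\mathfrak{p}}$ and hence $\phi = f/g \in R_{\mathfrak{p}}$.

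I do not anticipate any serious obstacle; the proof is essentially a bookkeeping exercise once one observes that the hypothesis $A \subseteq R_{\mathfrak{p}}$ (together with $t \in R$) drags the whole of $A[t]$ into $R_{\mathfrak{p}}$, while the localness of $A(t)$ forces unit-content polynomials to remain units after passing to the subring $R_{\mathfrak{p}}$. The only subtle point is the correct description of the maximal ideal of $R_{\mathfrak{p}}$ as the contraction of $\mathfrak{m}_{A(t)}$, which relies on $R_{\mathfrak{p}}$ being genuinely contained in $A(t)$.
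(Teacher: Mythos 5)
Your proof is correct and follows essentially the same route as the paper's: both arguments hinge on the observation that $t\in R$ and $A\subseteq R_{\mathfrak{p}}$ force $A[t]\subseteq R_{\mathfrak{p}}$, and that a polynomial $g\in A[t]$ with $c(g)=A$ lies outside $\mathfrak{m}_{A(t)}$, hence outside $\mathfrak{p}R_{\mathfrak{p}}$, and is therefore a unit of $R_{\mathfrak{p}}$. The paper packages this as the inclusion $A(t)=(A[t])_{\mathfrak{m}_A[t]}\subseteq (A[t])_{\mathfrak{q}}\subseteq R_{\mathfrak{p}}$ for $\mathfrak{q}=\mathfrak{p}R_{\mathfrak{p}}\cap A[t]$, whereas you invert the relevant denominators element by element, but the content is the same.
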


\begin{proof}
By assumption $t \in R$, thus there are inclusions $A[t] \subseteq R_{\mathfrak{p}} \subseteq A(t)$.  %Thus $\dim(R_\mathfrak{p})= \dim(V(t))= \dim(A_V)$. 
The ring $A(t)$ is the localization of $A[t]$ at the prime ideal $\mathfrak{m}_{A}[t] = \mathfrak{m}_{A(t)} \cap A[t]$. %Since $ \overline{R}= Kr(\overline{D}) $, then $\overline{R_\mathfrak{p}}= V(t)$. Hence, $\mathfrak{p}R_{\mathfrak{p}} \subseteq \mathfrak{m}_{V(t)} \cap A_V(t) = \mathfrak{m}_{A_V(t)}$. 
From the fact that $\mathfrak{p}R_{\mathfrak{p}} \subseteq \mathfrak{m}_{A(t)}$, it follows that $\mathfrak{q}:= \mathfrak{p}R_{\mathfrak{p}} \cap A[t] \subseteq \mathfrak{m}_{A}[t]$. This implies that $A(t) \subseteq (A[t])_{\mathfrak{q}} \subseteq R_{\mathfrak{p}}$. The thesis follows.
\end{proof}

\begin{theorem}
\label{main1}
Let $R$ be defined as in Construction \ref{construction}. Then $\overline{R}= Kr(\overline{D})$ and all the localizations of $R$ at the maximal ideals are of the form $R_{\p} = A_V(t)$ where $\p = \m_{A_V(t)} \cap R$.
\end{theorem}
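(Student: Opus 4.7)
The plan centers on first establishing the identity $R = \bigcap_V A_V(t)$ (with $V$ ranging over the valuation overrings of $D'$ satisfying $\m_V \supseteq \m_{D'}$), which then drives both assertions. To prove it, fix $V$ and set $\mathcal{F}_V := \lbrace A \in \mathcal{F} : A \supseteq A_V \rbrace$; since every domain is the intersection of its maximal excluding overrings, $\mathcal{F}_V$ is a defining family of $A_V$ consisting of local overrings. The integral closure $B_V := \overline{A_V} = \overline{V}^F$ is a semilocal Pr\"ufer domain of finite Krull dimension, being the integral closure of the valuation ring $V$ in the finite algebraic extension $F/F'$. The residue fields of $A_V$ contain $\mathbb{Q}$ (via $\mathbb{Q} \subseteq D' \subseteq A_V$), so they are infinite and in particular satisfy the cardinality hypothesis of Theorem \ref{semilocalprufer}; that theorem yields $\bigcap_{A \in \mathcal{F}_V} A(t) = A_V(t)$, and taking the further intersection over $V$ gives $R = \bigcap_V A_V(t)$.

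For part (a), the inclusion $\overline{R} \subseteq Kr(\overline{D})$ is immediate from the commutativity of integral closure with the Nagata operation: $\overline{A_V(t)} = B_V(t) = Kr(B_V)$ (the last equality because $B_V$ is Pr\"ufer), and since the valuation overrings of $\overline{D} = \overline{D'}^F$ are precisely the extensions to $F$ of valuation overrings of $D'$, one has $\bigcap_V B_V(t) = \bigcap_{W \in \Zar(\overline{D})} W(t) = Kr(\overline{D})$. For the reverse inclusion, take $\phi \in Kr(\overline{D})$ and write $\phi = \sum_{j=1}^n a_j(t) \theta_j$ with $a_j(t) \in F'(t)$. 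The extension $F(t)/F'(t)$ is Galois with the same group as $F/F'$ and inherits the conditions $(\ast_1)$ or $(\ast_2)$ (since $K \subseteq \overline{D} \subseteq \overline{D}(t)$), so Lemma \ref{fieldextensions} applied with $V(t)$ playing the role of $V$ shows that each summand $a_j(t)\theta_j$ is integral over $V(t)$ for every $V$. Because $V(t)$ is integrally closed in $F'(t)$, the coefficients of the minimal polynomial of $a_j(t)\theta_j$ over $F'(t)$ lie in $V(t)$ for every $V$; the inclusions $V(t) \subseteq A_V(t)$ then give $\bigcap_V V(t) \subseteq \bigcap_V A_V(t) = R$, placing those coefficients inside $R$. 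Hence $a_j(t)\theta_j$ is integral over $R$, and so is the finite sum $\phi$.

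For part (b), fix $V$ and set $\p := \m_{A_V(t)} \cap R$; I apply Lemma \ref{localizationlemma} with $A := A_V$. Since $R \subseteq A_V(t)$ and $t \in R$, it suffices to establish $A_V \subseteq R_{\p}$. Because $\m_D \subseteq D \subseteq R_{\p}$ and $A_V$ is the localization of $V[\m_D]$ at $(\m_V, \m_D)$, this reduces further to the claim $V \subseteq R_{\p}$: for every $v \in V$ one must produce $s(t) \in R$ that is a unit in $A_V(t)$ (equivalently, $s(t) \notin \p$) and satisfies $s(t) v \in R$, thereby giving $v = (s(t)v)/s(t) \in R_{\p}$. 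This construction is the main obstacle of the proof: writing $v = \alpha/\beta$ with $\alpha, \beta \in D'$, the naive candidate $s(t) = \alpha + \beta t$ has unit content in $A_V(t)$ but need not have unit content in $A_{V'}(t)$ for $V' \neq V$, since $v$ may lie neither in $A_{V'}$ nor have its reciprocal there. The polynomial $s(t)$ must be built carefully so that, simultaneously in every $A_{V'}$, it has at least one coefficient outside $\m_{A_{V'}}$ while $s(t) v$ still has all coefficients inside $A_{V'}$; the natural way to achieve this is to combine elements of $D'$ with suitable elements of $\m_D$ so that the troublesome ``poles'' of $v$ in each $A_{V'}$ get absorbed into $\m_D \subseteq A_{V'}$. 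Once such $s(t)$ is constructed, Lemma \ref{localizationlemma} yields $R_\p = A_V(t)$; the residual claim that every maximal ideal of $R$ arises as $\m_{A_V(t)} \cap R$ for some $V$ follows from the locality of each $R_\p = A_V(t)$ together with the presentation $R = \bigcap_V A_V(t)$.
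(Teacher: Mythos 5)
Your overall architecture coincides with the paper's: the identity $R=\bigcap_V A_V(t)$ obtained from Theorem \ref{semilocalprufer}, Lemma \ref{fieldextensions} applied to $F(t)/F'(t)$ for the integral closure, and Lemma \ref{localizationlemma} for the localizations. Part (a) is correct; your finish (each $a_j(t)\theta_j$ is integral over every $V(t)$, hence its minimal polynomial over $F'(t)$ has coefficients in $\bigcap_V V(t)=Kr(D')\subseteq R$) is a legitimate, and arguably cleaner, alternative to the paper's case split on whether the $\theta_j$ are units of $\overline D$ or simple roots with $\theta_j^{e_j}\in D'$.

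The genuine gap is in part (b): you correctly reduce to showing $V\subseteq R_{\p}$, but you only describe the construction of the required element $s(t)$ as ``the main obstacle'' without producing it, so the proof is incomplete as written; moreover the problem you pose (find $s(t)\in R\setminus\p$ with $s(t)v\in R$) is harder than what is actually needed. The paper's device is the following: for $\alpha\in V$ consider $\frac{1}{t+\alpha}$. For any valuation overring $V'$ of $D'$, either $\alpha\in V'$, in which case $t+\alpha$ has unit content in $V'$, or $\alpha^{-1}\in\m_{V'}$, in which case one rewrites $\frac{1}{t+\alpha}=\frac{\alpha^{-1}}{\alpha^{-1}t+1}$; either way $\frac{1}{t+\alpha}\in V'(t)$, hence $\frac{1}{t+\alpha}\in Kr(D')\subseteq R$. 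Since $\alpha\in V$, this element has value $0$ in $V(t)$, so it lies outside $\m_{V(t)}=\m_{A_V(t)}\cap V(t)$ and therefore outside $\p$; inverting it in the local ring $R_{\p}$ gives $t+\alpha\in R_{\p}$ and hence $\alpha\in R_{\p}$. No multiplier with $s(t)v\in R$ is required. Finally, your closing claim that every maximal ideal of $R$ has the form $\m_{A_V(t)}\cap R$ because $R=\bigcap_V A_V(t)$ is not a proof: an intersection need not have all its maximal ideals contracted from the factors. The paper deduces this from $\overline R=Kr(\overline D)$, whose maximal ideals are exactly the centers of the valuation rings $W(t)$ with $W\in\Zar(\overline D)$, each of which contracts to $\m_{A_V(t)}\cap R$ for a suitable $V$.
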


\begin{proof}
By construction, for every valuation overring $V$ dominating $D'$, the integral closure of $A_V$ is $B_V:=\overline{V}^F$, which is the intersection of all the extensions of $V$ to $F$, of which there are finitely many. Hence $\overline{A_V}$ is a semilocal Pr\"{u}fer domain. %and contains an infinite field.
By Theorem \ref{semilocalprufer}, Nagata ring extension commutes with intersection for all the overrings of $A_V$ (for this we use the assumption that $ \mathbb{Q} \subseteq D' \subseteq A_V$). It follows that $R = \bigcap_{V \supseteq D'} A_V(t)$ and therefore $$Kr(D')= \bigcap_{\footnotesize V \in \mbox{Zar}(D')} V(t) \subseteq R \subseteq Kr(\overline{D})= \bigcap_{\footnotesize V \in \mbox{Zar}(D')} B_V(t).$$ Pick now an element $\Phi \in Kr(\overline{D})$. The field extension $F(t)/F'(t) $ is clearly a finite Galois extension generated by $\theta_1, \ldots, \theta_n$ and satisfies one of the conditions $(\ast_1)$, $(\ast_2)$, since so does the field extension $F/F'. $ Hence, there exist $\phi_1(t), \ldots, \phi_n(t) \in F'(t)$ such that $$ \Phi= \sum_{j=1}^n \phi_j(t) \theta_j \in Kr(\overline{D})= \bigcap_{\footnotesize V \in \mbox{Zar}(D')} B_V(t).$$ By Proposition \ref{fieldextensions}, this yields $\phi_j(t) \theta_j \in Kr(\overline{D})$ for every $j=1, \ldots, n$. 
If now all $\theta_j$ are units in $\overline{D}$, this gives $$ \phi_j(t) \in \bigcap_{\footnotesize V \in \mbox{Zar}(D')} B_V(t) \cap F'(t)= \bigcap_{\footnotesize V \in \mbox{Zar}(D')} V(t) = Kr(D') \subseteq R$$ and shows that $\Phi$ is integral over $R$ since so are all $\theta_j$.
If instead all $\theta_j$ are simple roots, saying that $\theta_j^{e_j} \in D'$, we obtain $(\phi_j(t) \theta_j)^{e_j} \in Kr(\overline{D}) \cap F'(t) \subseteq R,$ again showing that $\Phi$ is integral over $R$, being a sum of integral elements.

Now, using that $\overline{R}= Kr(\overline{D})$, we obtain that all the maximal ideals of $R$ are centers of trivial extensions of valuation overrings of $D$. In particular, since $R \subseteq A_V(t) \subseteq \overline{A_V}(t)= B_V(t)$, every maximal ideal of $R$ is of the form $\p = \m_{A_V(t)} \cap R$. % and it coincides also with the center of any of the extensions of $V(t)$ to the field $F(t)$. 
This proves the inclusions $R_{\mathfrak{p}} \subseteq A_V(t)$ and $ \mathfrak{p}R_{\mathfrak{p}} \subseteq \mathfrak{m}_{A_V(t)} $. We show now that $A_V \subseteq R_{\p}$. For this observe first that $\m_D \subseteq \m_{A_V} \cap R \subseteq \p$. Then pick $\alpha \in V$. The fraction $ \frac{1}{t+\alpha} \in Kr(D') \subseteq R $ but $v(\frac{1}{t+\alpha})=0$, hence $\frac{1}{t+\alpha} \not \in \m_{V(t)}= \m_{A_V(t)} \cap V(t) $. This implies $\frac{1}{t+\alpha} \not \in \p $, thus $\alpha \in R_{\p}$. It is clear that $\m_{V(t)} \subseteq \p$. It follows that $A_V \subseteq R_{\p}$. 
%By assumption $t \in R$, thus there are inclusions $A_V[t] \subseteq R_{\mathfrak{p}} \subseteq A_V(t)$.  %Thus $\dim(R_\mathfrak{p})= \dim(V(t))= \dim(A_V)$. 
%Now, since $D(t) \subseteq R$, we get $A_V[t] \subseteq R_{\mathfrak{p}}$.
%The ring $A_V(t)$ is the localization of $A_V[t]$ at the prime ideal $\mathfrak{m}_{A_V}[t] = \mathfrak{m}_{A_V(t)} \cap A_V[t]$. Since $ \mathfrak{p}R_{\mathfrak{p}} \subseteq \mathfrak{m}_{A_V(t)} $, %$ \overline{R}= Kr(\overline{D}) $, then $\overline{R_\mathfrak{p}}= B_V(t)$. Hence, $\mathfrak{p}R_{\mathfrak{p}} \subseteq \mathfrak{m}_{V(t)} \cap A_V(t) = \mathfrak{m}_{A_V(t)}$. It follows that 
%we obtain $\mathfrak{q}:= \mathfrak{p}R_{\mathfrak{p}} \cap A_V[t] \subseteq \mathfrak{m}_{A_V}[t]$. This implies that $A_V(t) \subseteq (A_V[t])_{\mathfrak{q}} \subseteq R_{\mathfrak{p}}$. 
The thesis now follows by Lemma \ref{localizationlemma}.
%To show that $R_{\m} = A_V(t)$, it is sufficient to prove that $A_V \subseteq R_{\m}$ and apply Lemma \ref{localizationlemma}.
\end{proof}

%Since Nagata ring extension commutes with localization at prime ideals, 
The above theorem shows that the localizations of $R$ at prime ideals coincide with the localizations of some Nagata ring extensions of overrings of $D$.
Essentially by the same proof we can obtain the same result also for some integral extensions of $R$. 

\begin{corollary}
\label{corcostruction}
Take the assumptions and notations of Construction \ref{construction}. For each ring $A_V$, let $C_V$ be a local integral extension. Let $T= \bigcap_{V \supseteq D'} C_V$ and suppose $T \neq \overline{D}$. Let 
$R^*= \bigcap_{V \supseteq D'} C_V(t)$. Then the ring $R^*= Kr^{\mathcal{F}^*}(T)$ for some defining family $\mathcal{F}^*$ of $T$ and the integral closure of $R^*$ is $Kr(\overline{D})$. Moreover, let $\p = \m_{C_V(t)} \cap R$. If $C_V \subseteq R^*_{\p}$, then $R^*_{\p} = C_V(t)$.
\end{corollary}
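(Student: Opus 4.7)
The plan is to imitate the proof of Theorem \ref{main1}, reducing each claim to already-established facts: the structure theorem from Theorem \ref{main1}, the fact that integral closure commutes with Nagata ring extension, and Lemma \ref{localizationlemma}.

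For the first assertion, define $\mathcal{F}^*:=\{C_V : V \text{ valuation overring of } D' \text{ with }\mathfrak{m}_V\supseteq\mathfrak{m}_{D'}\}$. By the standing assumption $T=\bigcap_V C_V$, so $\mathcal{F}^*$ is a defining family of $T$ in the sense of Section~4, and by construction $Kr^{\mathcal{F}^*}(T)=\bigcap_V C_V(t)=R^*$. No serious content here, only bookkeeping against Definition \ref{defkron}.

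For the integral-closure assertion, I would exploit that each $C_V$ is integral over $A_V$, so $\overline{C_V}=\overline{A_V}=B_V$, the semilocal Pr\"{u}fer domain considered in the proof of Theorem~\ref{main1}. Since Nagata ring extension commutes with integral closure (\cite[Theorem~3]{gil-hof2}), we have $\overline{C_V(t)}=B_V(t)$. Combining $C_V\supseteq A_V$ gives the sandwich
\[
R \;=\; \bigcap_V A_V(t) \;\subseteq\; R^* \;=\; \bigcap_V C_V(t) \;\subseteq\; \bigcap_V B_V(t) \;=\; Kr(\overline{D}).
\]
Theorem~\ref{main1} says $\overline{R}=Kr(\overline{D})$, so $R\subseteq R^*\subseteq\overline{R}$ and hence $\overline{R^*}=Kr(\overline{D})$, because $Kr(\overline{D})$ is integrally closed.

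For the localization claim, note that $t\in R^*$ and $T=\bigcap_V C_V\subseteq R^*$, so $T(t)\subseteq R^*\subseteq C_V(t)$. Setting $\mathfrak{p}:=\mathfrak{m}_{C_V(t)}\cap R^*$ and invoking the hypothesis $C_V\subseteq R^*_{\mathfrak{p}}$, we can apply Lemma~\ref{localizationlemma} with the substitutions $D\mapsto T$, $A\mapsto C_V$, $R\mapsto R^*$ to conclude $R^*_{\mathfrak{p}}=C_V(t)$.

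The main obstacle, if any, is purely psychological: one must keep track of the two parallel families $\{A_V\}$ and $\{C_V\}$ and verify at each step that the passage from $A_V$ to the integral extension $C_V$ preserves everything needed (same integral closure $B_V$, same valuation overrings, containment $T(t)\subseteq R^*$). Once these identifications are made, the argument is essentially mechanical, relying on Theorem~\ref{main1} and Lemma~\ref{localizationlemma} as black boxes.
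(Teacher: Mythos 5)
Your proof is correct and, for the integral closure and localization claims, follows exactly the paper's route: the sandwich $R\subseteq R^*\subseteq Kr(\overline{D})=\overline{R}$ from Theorem \ref{main1} gives $\overline{R^*}=Kr(\overline{D})$, and Lemma \ref{localizationlemma} (applied with $T$, $C_V$, $R^*$ in place of $D$, $A$, $R$) handles the localization. The one place you diverge is the first assertion: you take $\mathcal{F}^*=\{C_V\}$, which does satisfy the letter of Definition \ref{defkron} and makes the claim pure bookkeeping, whereas the paper chooses $\mathcal{F}^*$ to be the family of all maximal excluding overrings of the rings $C_V$ --- a choice that costs something (one must invoke the fact that each $\overline{C_V}=B_V$ is semilocal Pr\"ufer together with Theorem \ref{semilocalprufer} to see that intersecting the Nagata rings over that larger family still yields $\bigcap_V C_V(t)$) but buys a defining family consisting of maximal excluding domains, which is the kind of family the paper's notion of Kronecker function ring is really about. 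Your version is logically valid for the statement as written; just be aware that it proves a weaker-in-spirit form of the first claim. One small point you handled sensibly: the statement's $\p=\m_{C_V(t)}\cap R$ should be read as $\m_{C_V(t)}\cap R^*$ for Lemma \ref{localizationlemma} to apply, and your verification that $T(t)\subseteq R^*\subseteq C_V(t)$ is exactly what is needed there.
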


\begin{proof}
The integral closure of any of the rings $C_V$ is semilocal and Pr\"{u}fer. Hence, as in the proof of Theorem \ref{main1}, the family $\mathcal{F}^*$ can be chosen to be the family of all the maximal excluding overrings of the rings $C_V$. By definition we have inclusions $R \subseteq R^* \subseteq R \subseteq Kr(\overline{D})$, showing that $\overline{R^*}= \overline{R}= Kr(\overline{D})$. The last part about localizations follows by Lemma \ref{localizationlemma}.
%For the part about localizations, we only need to show that $C_V \subseteq R^*_{\p}$ and apply Lemma \ref{localizationlemma}....
\end{proof}

Corollary \ref{corcostruction} can be applied to the following situation. %Notice that the assumption about $\m_D$ is fulfilled by both families of rings of Examples \ref{ex51} and \ref{ex52}.

\begin{prop}
\label{propcostruction}
Let $D= K'+\m$ be constructed as in Example \ref{ex51}. %Suppose that $\m_D \subseteq \sum_{i=1}^n \theta_i \m_{D'}$. 
Let $C_V= V[\sum_{i=1}^n \theta_i \m_{V}] $ and let $\p = \m_{C_V(t)} \cap R$. Then $C_V$ is local and $A_V \subseteq C_V \subseteq R^*_{\p}$. 
\end{prop}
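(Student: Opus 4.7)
My plan is to prove the three claims in sequence, with the last being the main obstacle.

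For $C_V$ to be local, write $C_V = V + N$ with $N := \sum_{i=1}^n \theta_i \m_V$. Since the structure constants in $\theta_i\theta_j = \sum_k c_{ijk}\theta_k$ lie in $K' \subseteq V$, one checks that $N$ is a $C_V$-ideal, and the $F'$-linear independence of $\theta_1=1,\theta_2,\ldots,\theta_n$ gives $N \cap V = \m_V$; hence $C_V/N \cong V/\m_V$ is a field and $N$ is maximal. That $N$ is the unique maximal follows because $C_V$ is integral over $V$ (each $\theta_i$ being integral over the field $K' \subseteq V$), so $C_V$ embeds in the semilocal Pr\"ufer domain $B_V := \overline{V}^F$; any maximal of $C_V$ lifts to a maximal of $B_V$, which necessarily contains $\m_V B_V \supseteq N$, forcing the maximal of $C_V$ to be $N$. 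For $A_V \subseteq C_V$, the inclusions $V \subseteq C_V$ and $\m_D = \sum_i \theta_i \mathfrak{n} \subseteq N$ (using $\mathfrak{n}\subseteq \m_V$) give $V[\m_D] \subseteq C_V$; any $v+x$ with $v\in V^\times$ and $x\in\m_D$ satisfies $v+x \notin N$ (as $N\cap V = \m_V$), hence is a unit in the local ring $C_V$, so the localization $A_V = V[\m_D]_{(\m_V,\m_D)}$ embeds.

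For $C_V \subseteq R^*_\p$, I would adapt the argument of Theorem \ref{main1}. First, $V \subseteq R^*_\p$: for $\alpha \in V$, the fraction $1/(t+\alpha) \in Kr(D') \subseteq R^*$ is a unit in $V(t) \subseteq C_V(t)$, hence avoids $\m_{C_V(t)}$ and therefore $\p$; combined with $t \in R^*$, this gives $\alpha \in R^*_\p$. Since also $\m_D \subseteq D \subseteq R^*$, and $R^*_\p$ is dominated by $C_V(t)$ (so $\p R^*_\p \subseteq \m_{C_V(t)}$), the same domination argument as in Theorem \ref{main1} shows that units of $A_V \subseteq A_V(t) \subseteq C_V(t)$ remain units of $R^*_\p$, yielding $A_V \subseteq R^*_\p$.

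The main obstacle is then $\theta_i m \in R^*_\p$ for every $m \in \m_V$. Here $\theta_i m$ satisfies the monic integral equation $y^{d_i} + a_{d_i-1}\,m\,y^{d_i-1} + \cdots + a_0 m^{d_i} = 0$ in $V[y]$, obtained by homogenizing $\theta_i$'s minimal polynomial $p_i(x) \in K'[x]$, so $\theta_i m$ is integral over $V \subseteq R^*_\p$. To upgrade this integrality to actual membership I would exhibit $\theta_i m$ as a ratio $\beta/\gamma$ with $\beta,\gamma \in R^* = \bigcap_W C_W(t)$ and $\gamma \notin \p$: using condition $(\ast_1)$ of Example \ref{ex51} (that $K \subseteq \overline{D}$), I form the norm polynomial $\prod_\sigma(t + \sigma(\theta_i) m) \in V[t]$, whose leading coefficient is $1$ and whose other coefficients lie in $\m_V$ (giving unit content in $V$), and pair it with a numerator constructed from the elements $\theta_i\mathfrak{n} \subseteq \m \subseteq D \subseteq R^*$. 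The key technical difficulty is verifying that this ratio has the required properties in \emph{every} $C_W(t)$ simultaneously, especially for the valuations $W$ of $D'$ where $v_W(m) < 0$; this is to be achieved by rescaling with appropriate powers of $m$ that become units in such $W$, exploiting the specific pullback structure of Example \ref{ex51} (the decomposition $\overline{D} = K + \m$ along the basis $\theta_1,\ldots,\theta_n$) to absorb the rescaling uniformly.
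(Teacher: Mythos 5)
Your treatment of the first three claims is sound. Proving that $C_V$ is local by writing $C_V=V+N$ with $N=\sum_{i=1}^n\theta_i\m_V$ an ideal satisfying $N\cap V=\m_V$, and then ruling out other maximal ideals by lying over into the semilocal ring $\overline{V}^F$, is a correct alternative to the paper's argument, which instead inverts $a+\sum_i\theta_ib_i$ directly by multiplying by its Galois conjugates and observing that the resulting norm is $a^n$ plus an element of $\m_VC_V\subseteq\mbox{Jac}(\overline{V}^F)$. The inclusions $A_V\subseteq C_V$ and $V\subseteq R^*_{\p}$ are handled essentially as in the paper.

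The argument is incomplete, however, exactly where the proposition has content: the inclusion $\theta_i\m_V\subseteq R^*_{\p}$. You rightly observe that integrality of $\theta_i m$ over $V$ is not enough and that one must exhibit an element of $R^*=\bigcap_{W\supseteq D'}C_W(t)$ that is a unit at $\p$, but the construction is only sketched and you yourself flag the uniformity over all $W$ as an unresolved ``key technical difficulty.'' That difficulty is genuine, and the route you propose does not survive it: the partial product $\prod_{\sigma\neq 1}\bigl(t+\sigma(\theta_i)m\bigr)$ has coefficients that are $K$-multiples (not $K'$-multiples) of powers of $m$, which fail to lie in $C_W=W+\sum_j\theta_j\m_W$ whenever $m$ is a unit of $W$, while rescaling by powers of $m$ destroys the unit content of the norm denominator at precisely those $W$ with $v_W(m)>0$. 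The paper's resolution is different and much lighter: it works with the single fraction $\phi=\frac{1}{t+m\theta_i}$ and verifies $\phi\in C_W(t)$ for each $W$ by a two-case content computation --- if $m\in W$ the denominator $t+m\theta_i$ already has unit content over $C_W$, and if $m\notin W$ one rewrites $\phi=\frac{m^{-1}\theta_i^{-1}}{m^{-1}\theta_i^{-1}t+1}$ and uses $m^{-1}\in\m_W$ together with $\theta_i^{-1}\in K=\sum_jK'\theta_j$, so that $\theta_i^{-1}\m_W\subseteq C_W$. Since $\phi$ is a unit of $C_V(t)$ it avoids $\p$, whence $t+m\theta_i=\phi^{-1}\in R^*_{\p}$ and $m\theta_i\in R^*_{\p}$. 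The idea you are missing is to invert the linear polynomial $t+m\theta_i$ inside $R^*_{\p}$, exploiting the flip $m\leftrightarrow m^{-1}$, $\theta_i\leftrightarrow\theta_i^{-1}$ for the valuations not containing $m$, rather than to represent $m\theta_i$ itself as a ratio of elements of $R^*$.
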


\begin{proof}
From the inclusions $\m_D \subseteq \sum_{i=1}^n \theta_i \m_{D'} \subseteq \sum_{i=1}^n \theta_i \m_{V}$, it follows that $A_V \subseteq C_V$ and $\overline{C_V}= \overline{V}^F$. Let $\sigma_1, \ldots, \sigma_n$ be the elements of Gal$(F/F')$ with $\sigma_1$ equal to the identity.
To see that $C_V$ is local, pick $\alpha= a + \sum_{i=1}^n \theta_i b_i$ with $b_i \in \m_V$ and $a$ a unit in $V$.
Observe that $\sigma_j(\theta_i)b_i \in \m_VC_V \subseteq \mbox{Jac}(\overline{V}^F)$ for every $i,j$. In particular the product $\prod_{i=1}^n \sigma_i(\alpha) = a^n + z$, with $z \in \m_VC_V$, is a unit in $C_V$. It follows that $$ \frac{1}{\alpha}= \frac{\prod_{i=2}^n \sigma_i(\alpha)}{\prod_{i=1}^n \sigma_i(\alpha)} \in C_V. $$  %if $z \in \m_V$, all the elements of the form $\sigma_$
 
Now let us prove that $C_V \subseteq R^*_{\p}$. By the same argument used in the proof of Theorem \ref{main1}, we obtain $V \subseteq R^*_{\p}$. We need to show that $z\theta_i \in R^*_{\p}$ for $z \in \m_V$. For this consider the fraction $ \phi=\frac{1}{t+z\theta_i} $. For all the valuation overrings $V'$ of $D'$ containing $z$, we have $\phi \in C_V(t)$. For the valuation overrings $V'$ not containing $z$, we have $z^{-1} \in \m_{V'}$, $z^{-1}\theta_i^{-1} \in C_V$, and $ \phi=\frac{z^{-1}\theta_i^{-1}}{z^{-1}\theta_i^{-1}t+1} \in C_V(t) $. Thus $\phi \in \bigcap_{V \supseteq D'} C_V(t) = R$. 
In particular, since $\phi$ is a unit in $C_V$, we obtain $\phi^{-1} \in R^*_{\p}$ and therefore $z\theta_i \in R^*_{\p}$.
\end{proof}

In general the rings $A_V$ may be too small for being maximal excluding. In particular if $\overline{V}^{F}$ has more than two maximal ideals we do not expect $A_V$ to be maximal excluding. 
%In some cases the ring $A_V$ itself is maximal excluding
In some cases it can be shown that $A_V$ is maximal excluding.
For $D=K[[x^2, x^3, y]]$ and $D'=K[[x^2, y]]$, we have $A_V= V[x^3]_{\m_V, x^3}$. 
For $V=K[[x^2, y, \frac{y}{x^{2}}, \frac{y}{x^{4}}, \ldots]]$, the ring $A_V=K[[x^2, x^3, y, \frac{y}{x}, \frac{y}{x^{2}}, \ldots]]$ is maximal excluding since it can be expressed as pullback as in Theorem \ref{pullbacksquare}.
Also, if $V=K[[y, x^2, \frac{x^2}{y}, \frac{x^2}{y^2}, \ldots]]$, the ring $A_V=K[[y, x^3, x^2, \frac{x^2}{y}, \frac{x^2}{y^2}, \ldots]]$ is a generalized power series ring and is maximal with respect to excluding the element $\frac{x^3}{y}$ by Theorem \ref{maxexclseries}. 
%K[[x^2, \frac{y}{x^{2k}}, \, k \leq 0]]  K[[x^2, x^3, \frac{y}{x^{k}}, \, k \in \Z]] $V=K[[y, \frac{x^2}{y^k}, \, k \geq 0]]$, the ring $A_V=K[[y, x^3, \frac{x^2}{y^{k}}, \, k \in \Z]]$
In the next examples we construct rings of the form $Kr^{\mathcal{F}}(D)$ such that the rings $A_V$ or $C_V$ are all maximal excluding.

\begin{example}
\label{ex53}
Adopting the same notation of Example \ref{ex51} and Proposition \ref{propcostruction}, let 
$D'= \Q+ \mathfrak{n}$, $D= \Q+ \m= \Q+ \mathfrak{n} + i \mathfrak{n}$ and $\overline{D}= \Q(i)+\m$. For a valuation overring $V$ of $D'$, set $C_V = V[i\m_V]$. We show that $C_V$ is maximal with respect to excluding the element $i$. 
It is clear that $C_V \subseteq V(i)$ is a minimal ring extension.
%Let $\kappa $ be the residue field of $V$.
We have two possible cases. In the first $a^2 + 1 \not \in \m_V$ for every $a \in V$. It is easy to check that in this case there is only one valuation $W$ of $F$ extending $V$, and $w(a+ib)= \min(v(a), v(b))$. Hence, $W=V(i)$ and its residue field is isomorphic to $\frac{V}{\m_V}(i)$. Now \cite[Theorem 2.4]{gil-hei}, or alternatively our Corollary \ref{k+mcase}, shows that $C_V$ is maximal excluding.
In the other case we have two distinct conjugated valuations $W_1$ and $W_2$ extending $V$ to $F$. We observe that $V(i)=W_1 \cap W_2$, indeed for $a+ib \in W_1 \cap W_2$ we cannot have $v(a)= v(b)<0$, otherwise the conjugate $a-ib$ would not be in the same ring. Again \cite[Theorem 2.4]{gil-hei} shows that $C_V$ is maximal excluding. The structure of $C_V$ in this case is described by \cite[Theorem 14]{jaballah}.
\end{example}

In the next example we replace the intersection of all the valuation overrings of $D'$ by the intersection of a smaller collection, which still form a defining family for $D'$. The Kronecker function ring associated to this family is an overring of the canonical Kronecker function ring.

\begin{example}
\label{ex54}
Let $\kappa$ be a field, $x$ an indetreminate over $\kappa$, and $T$ a local integrally closed domain of the form $\kappa(x)+\m$. Set $W=\kappa[[x]]$ and $B=\kappa[[x^2, x^3]]$ (one can also take any other maximal excluding domain having $W$ as integral closure). Define $D$ to be the pullback $ B + \m$. If we assume that $T$ is not a valuation ring, the ring $D$ is not maximal excluding by Theorem \ref{pullbacksquare}.
 For $F=\mathcal{Q}(D)$, let $F'$ be the largest subfield of $F$ containing $x^2$ but not $x$. The ring $D'$ in this case is set to be $\kappa[[x^2]]+ (\m \cap F')$ and we set also $T'= D'_{\m \cap F'}= \kappa(x^2)+(\m \cap F')$. The extension $F/F'$ satisfies the assumption of Construction \ref{construction}. We can now express $D'$ as the intersection of the valuation overrings $V$ of the form $V= \pi_V^{-1}(\kappa[[x^2]]) $ where $\pi_V$ is the quotient map from a valuation overring $V'$ of $T'$ to its residue field. %Let $R'$ be the Kronecker function ring of $D'$ associated to this family of valuation overrings. 
 For any of such $V$, we can define as previously $A_V= V[\m_D]_{\m_V, \m_D} = V[x^3, \m]$. In particular $A_V$ is the pullback of $B$ with respect to the quotient map from the valuation overring $V'(x)$ of $T$ to its residue field. Therefore $A_V$ is maximal excluding by Theorem \ref{pullbacksquare}.
 %The integral closure of $A_V$ is a valuation overring of $D$ obtained as a pullback of $W$ with respect to the quotient map from $V'(x)$ to its residue field.
 %Moreover $A_V$ is maximal excluding by Theorem \ref{pullbacksquare}, since $V'(x)$ is a valuation domain.
 Let $\mathcal{F}$ be the family of all the rings $A_V$. The same proof used for Theorem \ref{main1} shows that, if $R= Kr^{\mathcal{F}}(D)$, then $\overline{R}$ is the Kronecker function ring of $\overline{D}$ obtained by intersecting all the valuations of the form $V'(x)(t)$. Furthermore, all the localizations of $R$ at the maximal ideals are of the form $A_V(t)$.
\end{example}

We now move on to discuss a second construction which 
%includes also some integral extensions of the ring $R$ defined in Construction \ref{construction}. Under extra assumptions we see how the rings defined in this second construction 
produces rings that are very close the the Kronecker function ring of $\overline{D}$, in the sense that they locally coincide for almost all the maximal ideals. This construction allows also to describe cases where the integral closure of $D$ is not finitely generated over $D$ or where the integral closure of some overring in the defining family of $D$ is not Pr\"{u}fer. The rings we define here are not necessarily of the form $Kr^{\mathcal{F}}(D)$, if the integral domain $A$ is neither maximal excluding nor has Pr\"{u}fer integral closure. However, Theorem \ref{VcapA} give us the tools to study a more general situation.

\begin{construction}
\label{construction2} \rm
Let $D$ be an integral domain and suppose that there exists a semilocal overring $A$ such that $D = \overline{D} \cap A$. Suppose that also $\overline{A}$ is semilocal with $s$ maximal ideals and all the residue fields of $D$ have cardinality at least $s+1$. %either $\overline{A}$ is semilocal and Pr\"{u}fer or $A$ is a maximal excluding domain such that $\overline{A}$ is a PVD. %We also assume that $(A \cap V)(t)= A(t) \cap V(t)$ for every valuation overring $V$ of $D$. 
Let $R'= Kr(\overline{D}) \cap A(t)$. 
\end{construction}

Notice that if $D$ arises as in Construction \ref{construction} and $A$ contains an intersection of rings of the form $A_V$, then the above ring $R'$ is an overring of the ring $R$, contained in $Kr(\overline{D})$. By Theorem \ref{main1}, one immediately gets $\overline{R'}= Kr(\overline{D})$. 
Easy examples that illustrate this construction are $$ D= K[[x^2, x^3, y]]= K[[x,y]] \cap K((y))[[x^2, x^3]],$$ and the ring $D$ defined in Example \ref{ex41}.

Before proving the main theorem about this construction, we need to investigate the integral closure of local rings of the form $A \cap V$ considered in Theorem \ref{VcapA}-(i). Obviously there is an inclusion $\overline{A \cap V} \subseteq \overline{A} \cap V$. However, the containment may be strict. For instance take $A= \Q + X\Q(i)[[X]]$ and $V= \Z_{(5)}[\frac{2+i}{5}] + X\Q(i)[[X]]$. These rings have the same quotient field and $V$ is a valuation domain.
We have $A \cap V = \Z_{(5)} + X\Q(i)[[X]]$, $\overline{A}= \Q(i)[[X]]$, and 
$$ \overline{A \cap V} = \Z_{(5)}(i) + X\Q(i)[[X]] \subsetneq \overline{A} \cap V = V. $$
Next proposition analyzes the relation between $\overline{A \cap V} $ and $ \overline{A} \cap V$.

\begin{prop}
\label{integralclosure}
Let $A$ be a local domain, $V$ a valuation ring such that $\mathcal{Q}(A)= \mathcal{Q}(V)= \mathcal{Q}(A \cap V)$ and suppose that $A$ is dominated by a localization $V_Q$ of $V$. Let $\q=Q \cap \overline{A} $ be the center of $V_Q$ in $\overline{A} $ and set $B= \overline{A \cap V}$ and $C = \overline{A}_{\q} \cap V$.
Then the following assertions hold:
\begin{enumerate}
\item[(1)] $ B':= \frac{B}{Q \cap B} $ and $C' := \frac{C}{Q \cap C}$ are Pr\"{u}fer domains with the same quotient field.
\item[(2)] $C'$ is the localization of $B'$ at the maximal ideal defined as the center of the valuation ring $ \frac{V}{Q} $.
\end{enumerate}
 %$ B'= \frac{B}{Q \cap B} $ and $C' = \frac{C}{Q \cap C}$ are Pr\"{u}fer domains with the same quotient field. Furthermore $C'$ is the localization of $B'$ at the maximal ideal $ \m$ defined as center of the valuation $ \frac{V}{Q} $.
\end{prop}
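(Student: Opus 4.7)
The plan is to apply Theorem \ref{VcapA}-(i) twice—once to $(A, V)$ and once to $(\overline{A}_\q, V)$—combined with the basic fact that integral closure commutes with localization. The first application yields that $V_0 := (A \cap V)/\m_A$ is a valuation domain with quotient field $A/\m_A$. The second is legitimate because $\overline{A}_\q$ is local and dominated by $V_Q$ (since $V_Q$ already dominates $A$, hence $\overline{A}$ at $\q$, hence $\overline{A}_\q$); it produces $C$ local, with $\q\overline{A}_\q$ a common prime of $\overline{A}_\q$ and $C$, and $C/\q\overline{A}_\q$ a valuation domain with quotient field $\overline{A}_\q/\q\overline{A}_\q$. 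A short check using domination (namely $\q\overline{A}_\q = QV_Q \cap \overline{A}_\q$ together with $\q\overline{A}_\q \subseteq V$) gives $Q \cap C = \q\overline{A}_\q$, so $C' = C/(Q \cap C)$ is a valuation domain, already settling half of (1).

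Next I would exploit that integral closure commutes with localization to establish the identification $\overline{A}_\q = B_{Q \cap B}$: indeed $\overline{A} = \overline{(A \cap V)_{\m_A}} = (\overline{A \cap V})_{\m_A} = B_{\m_A}$, and localizing further at $\q$ (using $\m_A \subseteq Q \cap B$) yields $\overline{A}_\q = B_{Q \cap B}$. The residue field of this local ring is then
\[
\overline{A}_\q/\q\overline{A}_\q \;=\; B_{Q \cap B}/(Q \cap B) B_{Q \cap B} \;=\; \mathcal{Q}(B/(Q \cap B)) \;=\; \mathcal{Q}(B'),
\]
so $\mathcal{Q}(B') = \mathcal{Q}(C')$, completing the quotient-field part of (1). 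For the Pr\"ufer claim on $B'$, the strategy is to show $B' = \overline{V_0}^{\mathcal{Q}(B')}$: since $V_0$ is a valuation domain and $\mathcal{Q}(B')/\mathcal{Q}(V_0)$ is algebraic (as $B'$ is integral over $V_0$), this integral closure is Pr\"ufer by the classical theorem of Krull on integral closures of valuation rings in field extensions. The inclusion $B' \subseteq \overline{V_0}^{\mathcal{Q}(B')}$ is immediate from integrality. For the reverse, given $x \in \mathcal{Q}(B')$ integral over $V_0$, lift $x$ to $y \in \overline{A}_\q = B_{Q \cap B}$ and write $y = b_1/b_2$ with $b_1, b_2 \in B$ and $b_2 \notin Q \cap B$ (by clearing denominators via some $t \in (A \cap V) \setminus \m_A$; that $b_2$ avoids $Q$ follows from $V_Q$-domination of $\overline{A}_\q$). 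Then translate the integrality relation for $x$ over $V_0$, which holds only modulo $\q\overline{A}_\q$, into a genuine integrality relation over $A \cap V$ realized inside $B$, using that $B$ is integrally closed. I expect this last step—controlling the error term in $\q\overline{A}_\q$ and propagating integrality back to $B$—to be the main technical obstacle.

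Part (2) then follows from general Pr\"ufer theory: $C'$ is a valuation overring of the Pr\"ufer domain $B'$ with the same quotient field, and such overrings are precisely the localizations at prime ideals. Thus $C' = (B')_{\mathfrak{p}}$ for $\mathfrak{p} = \m_{C'} \cap B'$; a direct verification using $C' \subseteq V/Q$ identifies this with the center $\m_{V/Q} \cap B'$ of $V/Q$ in $B'$. Maximality of $\mathfrak{p}$ in $B'$ is then immediate from $(B')_{\mathfrak{p}} = C'$ being local.
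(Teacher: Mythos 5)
Your overall architecture matches the paper's: two applications of Theorem \ref{VcapA}-(i), identification of the quotient fields of $B'$ and $C'$, Krull's theorem on integral closures of valuation domains in algebraic field extensions for the Pr\"ufer property, and standard Pr\"ufer theory for part (2). Your route to the equality of quotient fields, via the identification $\overline{A}_{\q}=B_{Q\cap B}$ (integral closure commutes with localization, $A=(A\cap V)_{\m_A}$, and $\q\cap B=Q\cap B$), is in fact tidier than the paper's, which instead verifies by hand that $\overline{A}\cap V\subseteq S^{-1}B$ for $S=(A\cap V)\setminus\m_A$ by multiplying an integral equation by a product of the coefficients not lying in $V$. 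Your part (2) is also essentially the paper's argument, phrased through localizations of Pr\"ufer domains rather than through domination by $V$.

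The genuine gap is exactly where you flag it: you never establish the inclusion $\overline{V_0}^{\mathcal{Q}(B')}\subseteq B'$, i.e.\ the Pr\"ufer property of $B'$, which is an essential part of assertion (1). Your plan --- lift an element integral over $V_0$ to $\overline{A}_{\q}$ and convert a congruence modulo $\q\overline{A}_{\q}$ into a genuine integral equation over $A\cap V$ --- is left as an acknowledged ``technical obstacle,'' so as written the proof of (1) is incomplete. The paper closes this step with different bookkeeping that avoids lifting equations: since $B$ is integral over $A\cap V$ and $Q\cap(A\cap V)=\m_A$, the quotient $B'=B/(Q\cap B)$ is an integral extension of $A'=V_0$ (standard behavior of integral extensions under passage to quotients by primes lying over one another), and $B'$ is integrally closed; these two facts force $B'=\overline{V_0}^{\mathcal{Q}(B')}$, one inclusion being integrality of $B'$ over $V_0$ and the other being that any element of $\mathcal{Q}(B')$ integral over $V_0\subseteq B'$ is integral over the integrally closed $B'$, hence lies in it. Note that, given the integrality of $B'$ over $V_0$, the inclusion you are missing is logically equivalent to the integral closedness of $B/(Q\cap B)$; the paper asserts this last point without further argument. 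To complete your proof you should therefore either justify that this quotient of the integrally closed domain $B$ is again integrally closed (for instance by showing $Q\cap B$ is a divided prime, so that $B$ is the pullback of $B'$ over the integrally closed local ring $B_{Q\cap B}=\overline{A}_{\q}$, and invoking the standard pullback criterion), or actually carry out the equation-lifting you only sketch. Some argument is required at this point, and your proposal currently contains none.
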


\begin{proof}
Since $V_Q$ dominates $A$, then $\q$ is a maximal ideal of $\overline{A}$ and $V_Q$ dominates $\overline{A}_{\q}$. Thus $C$ is local and its structure is described in Theorem \ref{VcapA}-(i). In particular $Q \cap C = \q\overline{A}_{\q}$ and $C'$ is a valuation domain with quotient field $ \kappa= \frac{\overline{A}_{\q}}{\q\overline{A}_{\q} } = \frac{\overline{A}}{\q}.$ For the same reason, again by 
Theorem \ref{VcapA}-(i), we have that $A':= \frac{A \cap V}{\m_A} $ is a valuation domain with quotient field $\kappa'= \frac{A}{\m_A}$. By standard results on quotient rings of integral extensions, since $ Q \cap (A \cap V)= \m_A$, we get that $B'$ is an integral extension of $A'$ and $\kappa$ is an algebraic field extension of $\kappa'$. The integral closure of a valuation domain in an algebraic field extension of the quotient field is a Pr\"{u}fer domain. The ring $B'$ is clearly integrally closed, thus is a Pr\"{u}fer domain. It is clear that $B' \subseteq C'$. To prove (1) it remains to show that they have the same quotient field. For this we prove that $\overline{A} \cap V$ is contained in a ring of fractions $S^{-1}B$ for $S$ a multiplicatively closed set of $B$ with $S \cap Q = \emptyset$. Define $S= (A \cap V) \setminus \m_A$. This set $S$ is multiplicatively closed since $\m_A$ is a prime ideal of $A \cap V$ and $S \cap Q = S \cap \m_A= \emptyset$. Pick $x \in \overline{A} \cap V$. Then $x^n + a_{n-1}x^{n-1}+ \ldots + a_1x + a_0=0$ for $a_0, \ldots, a_{n-1} \in A$. If $a_0, \ldots, a_{n-1} \in V$ then $x \in B$. Thus suppose that at least one of them is not in $V$ and set $a= \prod_{a_k \not \in V} a_k$. Since $\m_A \subseteq Q \subseteq V$, we have that $a$ is a unit in $A$ and $a^{-1} \in V$. Multiplying the above equation of integral dependence of $x$ by $a^{-n}$ yields an equation of integral dependence of $a^{-1}x$ over $A \cap V$. It follows that $a^{-1}x \in B$ and $x \in S^{-1}B$ because $a^{-1} \in S$. From this, going modulo the contraction of $Q$ and observing that $ \frac{\overline{A} \cap V}{\q} $ and $C'$ have the same quotient field, 
we obtain that $B'$ and $C'$ have the same quotient field. Finally to prove (2), we simply notice that $V$ dominates both $A \cap V$ and $C$, therefore it dominates the localization of $B$ at the maximal ideal $ \m_V \cap B = \m_C \cap B$. It follows that $C'$ dominates the localization of $B'$ at the center of $\frac{V}{Q}$. Since $B'$ and $C'$ are Pr\"{u}fer domains with the same quotient field, $C'$ must be equal to that localization of $B'$.
\end{proof}

%\bigskip
\begin{theorem}
\label{main2}
Let $D$, $A$ and $R'$ be defined as in Construction \ref{construction2}. The following assertions hold:
\begin{enumerate}
\item[(i)] For a valuation overring $V$ of $D$, let $\p= \m_{V(t)} \cap R'$. Then $R'_{\p}= (A \cap V)_{\m_{V} \cap A}(t)$. Moreover, if $A$ and $V$ have no common proper overrings %and $\overline{A \cap V}= \overline{A} \cap V$, 
%and every one-dimensional valuation overring $W$ of $A$ admits a proper ideal $I_W$ such that $\bigcap_{W} I_{W} \subseteq A$,
then $R'_{\p} = V(t)$. 
\item[(ii)] For every valuation overring $\mathcal{V}$ of $R'$, there exists a valuation overring $V$ of $D$ such that $\mathcal{V} \supseteq (A \cap V)(t)$.
\item[(iii)] %If $Kr(\overline{D}) \cap \overline{A}(t)$ is the integral closure of $R'$, then 
The localizations of $R'$ at its maximal ideals are Nagata ring extensions of overrings of $R$.
\item[(iv)] %Suppose that %$A$ is local and $\overline{A \cap V}= \overline{A} \cap V$ for every valuation overring $V$ of $D$. If $B$ is an overring of $D$, integral over $A$, then $Kr(\overline{D}) \cap B(t)$ is integral over $R'$. In particular, %if $\overline{A}$ is local, then 
The integral closure of $R'$ is
$Kr(\overline{D}) \cap \overline{A}(t)$. 
\end{enumerate}
%\bf need integral closure semilocal and prufer or nagata rings commutes, add assumption in the construction \it
%and every valuation overring $W$ of $A$ admits a proper ideal $\bigcap_{W \supseteq A} I_{W} \subseteq A$, then then $R'_{\p} = V(t)$. \bf I would like to relax this assumption \rm
\end{theorem}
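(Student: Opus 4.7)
The containment $\overline{R'}\subseteq Kr(\overline{D})\cap \overline{A}(t)$ is the easy half. Both factors on the right are integrally closed overrings of $R'$: the Kronecker function ring $Kr(\overline{D})$ is Bezout, hence integrally closed, and $\overline{A}(t)$ equals the integral closure of $A(t)$ in $\mathcal{Q}(D)(t)$ by the cited result \cite[Theorem 3]{gil-hof2} (integral closure commutes with Nagata extension). So each of them, and therefore their intersection, sits between $R'$ and its integral closure.

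For the reverse inclusion, my plan is to characterize $\overline{R'}$ as the intersection of its valuation overrings and route through part (ii). Any valuation overring $\mathcal{V}$ of $R'$ is a valuation overring of $\overline{R'}$, and by (ii) contains $(A\cap V)(t)$ for some valuation overring $V$ of $D$; being integrally closed, $\mathcal{V}$ then contains $\overline{A\cap V}(t)$. So it suffices to prove that for every $\mathcal{V}\in\Zar(R')$ one has $Kr(\overline{D})\cap\overline{A}(t)\subseteq \mathcal{V}$, and then conclude by intersecting over $\mathcal{V}$.

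The key technical step will be to bridge $\overline{A\cap V}(t)$ with $(\overline{A}\cap V)(t)$. On the one hand, applying Theorem \ref{VcapA}(v) to the pair $\overline{A},V$ (legitimate since $\overline{A}$ is semilocal with $s$ maximal ideals and the residue field hypothesis on $D$ passes to $\overline A\cap V$) gives $(\overline{A}\cap V)(t)=\overline{A}(t)\cap V(t)$, and therefore
\[ Kr(\overline{D})\cap\overline{A}(t)=\bigcap_{V\in\Zar(\overline{D})}V(t)\cap\overline{A}(t)=\bigcap_{V}(\overline{A}\cap V)(t). \]
On the other hand, Proposition \ref{integralclosure} tells us that although $\overline{A\cap V}$ may be strictly smaller than $\overline{A}\cap V$, after quotienting by the center $Q$ of $V$ the two become Prüfer domains with the same quotient field, with $\overline{A}\cap V$ corresponding to a localization of $\overline{A\cap V}$ at a center inside $V$. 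Since $\mathcal{V}$ already contains $\overline{A\cap V}(t)$ and dominates it in the direction singled out by $V$, the extra localization needed to reach $(\overline{A}\cap V)(t)$ is automatic inside the valuation ring $\mathcal{V}$.

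The hard part, which I expect to require the most care, is precisely this last move: verifying that the localization provided by Proposition \ref{integralclosure} is implemented inside $\mathcal{V}$, i.e., that the multiplicative set one needs to invert to pass from $\overline{A\cap V}$ to $\overline{A}\cap V$ consists of elements that are already units in $\mathcal{V}$. This should follow from the domination part of (ii) together with the local description of $R'_{\m_\mathcal{V}\cap R'}$ given by (i) and (iii), but writing it down cleanly requires unpacking Proposition \ref{integralclosure} at the level of the valuation $\mathcal{V}$ and will be the principal obstacle.
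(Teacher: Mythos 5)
Your proposal treats only part (iv) and takes it as given that (i)--(iii) are already established; even granting that, the argument for (iv) has a genuine gap, and it is exactly the one you flag yourself at the end. The strategy is the right one (and is the paper's): show every valuation overring $\mathcal{V}$ of $R'$ contains $(\overline{A}\cap V)(t)$ for $V=\mathcal{V}\cap\mathcal{Q}(D)$, using part (ii), Theorem \ref{VcapA}(v) for the pair $\overline{A},V$, and Proposition \ref{integralclosure} to bridge $\overline{A\cap V}$ with $\overline{A}\cap V$. But the bridging step is not carried out, and the sentence ``the extra localization needed to reach $(\overline{A}\cap V)(t)$ is automatic inside the valuation ring $\mathcal{V}$'' rests on a misreading of Proposition \ref{integralclosure}: that proposition does \emph{not} say that $\overline{A}\cap V$ (or $C=\overline{A}_{\q}\cap V$) is a localization of $B=\overline{A\cap V}$. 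It says only that the \emph{quotients} $B/(Q\cap B)$ and $C/(Q\cap C)$ are Pr\"ufer domains with the same quotient field, one a localization of the other. There is no multiplicative set of $B$ whose inversion yields $C$, so there is nothing for $\mathcal{V}$ to ``implement.'' The actual mechanism is a content-perturbation argument: given $f\in C[t]$ with $c(f)=C$, one uses the agreement of $B_{\m_V\cap B}$ and $C$ modulo the contractions of $Q$ to produce $g\in Q[t]\subseteq\m_V[t]\subseteq\m_{\mathcal{V}}$ with $f+g\in B_{\m_V\cap B}[t]$ and $c(f+g)=B_{\m_V\cap B}$; since $\mathcal{V}\supseteq B_{\m_V\cap B}(t)$ one gets $(f+g)^{-1}\in\mathcal{V}$ and hence $f^{-1}=(f+g-g)^{-1}\in\mathcal{V}$. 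Without this (or an equivalent) argument the reverse inclusion is unproved.

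Two further omissions: first, Proposition \ref{integralclosure} only applies when $A$ is local and dominated by a localization $V_Q$ of $V$, so before invoking it you must run through the case analysis of Theorem \ref{VcapA} (the cases $\mathcal{V}\supseteq A(t)$, the case where $A$ and $V$ have no common overrings so that $(A\cap V)_{\m_V\cap A}=V$, and the reduction replacing $A$ by $A_{Q\cap A}$); your proposal skips these reductions entirely. Second, parts (i)--(iii) are themselves substantive (the localization statement in (i) needs Lemma \ref{localizationlemma} together with the explicit units $\frac{1}{t+a}$ and $\frac{s}{st+1}$, and (ii) needs Lemma \ref{semilocalunits} to turn the content condition into a single unit), and a complete proof of the theorem must include them.
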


\begin{proof}
It is well-known that the number of maximal ideals of $A$ is less or equal than the number of maximal ideals of $\overline{A}$. 
The assumption on the residue fields of $D$ implies that also the residue fields of $A \cap V$ and $\overline{A} \cap V$ have cardinality at least $s$ for every valuation overring $V$ of $D$. Therefore the intersections $A \cap V$ and $\overline{A} \cap V$ satisfy the hypothesis of Theorem \ref{VcapA}. \\
(i) %Set $T= (A \cap V)_{\m_{V} \cap A}$.
By Theorem \ref{VcapA}-(iv) %or Corollary \ref{corintersection} %or Theorem \ref{PVD} 
we immediately get $(A \cap V)(t)= A(t) \cap V(t)$. 
Hence, $R' \subseteq (A \cap V)(t)$ and $\p$ is equal also to the contraction in $R'$ of the maximal ideal of $(A \cap V)_{\m_{V} \cap A}$. This implies $R'_{\p} \subseteq (A \cap V)_{\m_{V} \cap A}(t)$. We only need to show $(A \cap V)_{\m_{V} \cap A} \subseteq R'$ and apply Lemma \ref{localizationlemma}. 
Let $a,s \in A \cap V$ with $s \not \in \mathfrak{m}_V$. Consider 
$$F = \frac{1}{t+ a},  \quad G = \frac{1}{t+ s^{-1}} = \frac{s}{st+1}. $$ 
Clearly $F,G \in Kr(\overline{D}) \cap A(t) = R'$. Moreover, $v(a) \geq 0$, $v(s)=0$, and $F, G$ are units in $V(t)$. Thus $F^{-1}= t+a$ and $G^{-1}= t+s^{-1}$ are in $  R'_{\mathfrak{p}}$. This proves $\frac{a}{s} \in R'_{\mathfrak{p}},$ because $t \in R'$.
If $A$ and $V$ have no common proper overrings we get $R'_{\p} = V(t)$ by Theorem \ref{VcapA}-(iii). \\%and $\overline{A \cap V}= \overline{A} \cap V$. 
(ii) Set
$V= \mathcal{V} \cap \mathcal{Q}(D)$ (of course we allow also the case $V=\mathcal{Q}(D) $). Clearly $V \supseteq R' \cap \mathcal{Q}(D) \supseteq D(t) \cap \mathcal{Q}(D) = D$.
Since $t \in R' $ we get $(A \cap V)[t] \subseteq V[t] \subseteq \mathcal{V}$. Pick a polynomial $f=\sum_{k=0}^m a_k t^k \in (A \cap V)[t]$ such that $c(f)= A \cap V$. Let $a_k$ be a coefficient of $f$. In every valuation overring of $Kr(\overline{D})$, the value of $a_k$ is larger than or equal to the value of $f$.
Hence $ a_kf^{-1} \in Kr(\overline{D}) \cap A(t) =  R' \subseteq \mathcal{V}. $ Since $c(f)= A \cap V$ and $A$ is semilocal, by Lemma \ref{semilocalunits} there exist $u_0, \ldots, u_m \in D$, that are either units or zeros, such that $a= \sum_{k=0}^m a_k u_k $ is a unit in $ A \cap V $. It follows that $af^{-1} \in R' \subseteq \mathcal{V}$. Now $a$ is a unit in $A \cap V$, hence a unit in $\mathcal{V}$. This implies $f^{-1} \in \mathcal{V}$ and shows $(A \cap V)(t) \subseteq \mathcal{V}$. \\
%which has a coefficient $u$ that is a unit in $A \cap V$. %such that $c(f)= A \cap V$. 
%Then $u^{-1}f \in (A \cap V)[t] \subseteq A[t] \subseteq B[t]$ and has a coefficient equal to $1$. According to our hypothesis, we get that $uf^{-1}$ is integral over $R$, hence $uf^{-1} \in \mathcal{V}$. Since $u$ is a unit in $V$, this implies $ f^{-1} \in \mathcal{V} $.
%Define $S$ as the multiplicatively closed subset of $A \cap V$ generated by the polynomials having some coefficient that is a unit in $(A \cap V)$. By what said above, we have that $\mathcal{V}$ contains both $S^{-1}(A \cap V)[t]$ and $R$.
%Pick now $g \in (A \cap V)[t]$ such that $c(g)= A \cap V$. 
%If $g \in S$, then $g^{-1} \in \mathcal{V}$. Suppose $g \not \in S$. Hence $g$ has two distinct coefficients $a, b$ such that $a \in \m_V \cap (A \setminus \m_A)$ and $b \in \m_A \cap (V \setminus \m_V)$. In every valuation overring of $Kr(\overline{D})$, the values of $a$ and $b$ are larger or equal than the value of $g$ (this because $a$ and $b$ are coefficients of $g$). Thus $ ag^{-1}, bg^{-1} \in Kr(\overline{D}) \cap A(t) =  R \subseteq \mathcal{V}. $ It follows that $(a+b)g^{-1} \in \mathcal{V}$, but $a+b$ is a unit in $A \cap V$, hence a unit in $\mathcal{V}$. This implies $g^{-1} \in \mathcal{V}$ and shows $(A \cap V)(t) \subseteq \mathcal{V}$.
(iii) Every prime ideal of $R'$ is the center of some valuation overring of $R'$. 
By item (ii), every valuation overring $\mathcal{V}$ of $R'$ contains an intersection of the form $ (A \cap V)(t) = A(t) \cap V(t)$ for some valuation overring $V$ of $D$. 
By item (i), $(A \cap V)_{\m_{V} \cap A}(t)$ is the localization of $R'$ at the center of the valuation overring $V(t)$. In particular, given a maximal ideal $\m$ of $A$, the ring $A_{\m}(t)$ is the localization of $R'$ at the center of any valuation overring $V(t)$ such that $V$ dominates $A_{\m}$. 
Let $\p= \m_{\mathcal{V}} \cap R'$. By Theorem \ref{VcapA}, since localization at the maximal ideals commutes with Nagata ring extension, $\mathcal{V}$ contains a ring $C(t)$ which is equal either to $A_{\m}(t)$ for $\m$ a maximal ideal of $A$ or to $(A \cap V)_{\m_{V} \cap A}(t)$. For what said above, this ring $C(t)$ is a localization of $R'$ at a prime ideal and, if $\q= \m_{\mathcal{V}} \cap C(t)$, then $\q \cap R' = \p$.
 Hence $R'_{\p}$ is the localization of $C(t)$ at $\q$. In particular, since we already know that the rings $C(t)$ are localizations of $R'$ at some prime ideal, it follows that the localizations of $R'$ at its maximal ideals are all of the form $C(t)$ for some overring $C$ of $D$. \\
 %Since $C(t)$ also is 
  %and it is the Nagata ring extension of some overring of $D$.
(iv) We argue showing that every valuation overring of $R'$ contains $\overline{A}(t) \cap V(t) = (\overline{A} \cap V)(t)$ %are of the form $V(t)$ with 
for some valuation overring $V$ of $D$. Let $\mathcal{V}$ be a valuation overring of $R'$ and let $V= \mathcal{V} \cap \mathcal{Q}(D)$. By item (ii) we already know that $ \mathcal{V} \supseteq (A \cap V)(t) $. If $\mathcal{V} \supseteq A(t)$, we are done since then $\mathcal{V} \supseteq \overline{A(t)} = \overline{A}(t)$.
%the integral closure of $A(t)$ is $\overline{A}(t)$.
Otherwise by Theorem \ref{VcapA}, localizing at the maximal ideals of $A \cap V$ we find that $ \mathcal{V} \supseteq (A \cap V)_{\m_V \cap A}(t). $ If $(A \cap V)_{\m_V \cap A} = V $ we can conclude. If not, again by Theorem \ref{VcapA}, we reduce to the case where there exists a prime ideal $Q$ of $V$ such that $V_Q$ contains $A$ and $(A \cap V)_{\m_V \cap A} = A_{Q \cap A} \cap V.  $ Without loss of generality, we simplify the notation possibly replacing $A$ by $A_{Q \cap A}$, and reduce to the case where $A$ and $A \cap V$ are local, $V_Q$ dominates $A$, and $ \mathcal{V} \supseteq (A \cap V)(t) $. Obviously, using that Nagata ring extension commutes with integral closure, we get $ \mathcal{V} \supseteq (\overline{A \cap V})(t) $.
 Now, if $\overline{A \cap V}= \overline{A} \cap V$ we are clearly done. If not we use the result of Proposition \ref{integralclosure}. Let $\q=Q \cap \overline{A} $ and set $B= \overline{A \cap V}$ and $C = \overline{A}_{\q} \cap V$.
  It is sufficient to prove that $\mathcal{V} \supseteq C(t)$. Since $\mathcal{V}$ contains $V$ and $t$, then $\mathcal{V} \supseteq C[t]$. Notice also that the maximal ideal $\m_V \cap B$ of $B$ is contained in $\m_{\mathcal{V}}$. Thus, by localization we find that $\mathcal{V} \supseteq B_{\m_V \cap B}(t)$.
 Pick $f \in C[t]$ such that $c(f)= C$. We want to prove that $f^{-1} \in \mathcal{V}$. For $h \in C[t]$, if $h^{-1} \in \mathcal{V}$ and $g \in \m_V[t] \subseteq \m_{\mathcal{V}}$ we get $(h+g)^{-1} \in \mathcal{V}$. Hence, we want to find $g \in \m_V[t]$ such that $f+g \in B_{\m_V \cap B}[t]$ and $c(f+g)=B_{\m_V \cap B}$. But Proposition \ref{integralclosure} implies that $B_{\m_V \cap B}$ and $C$ coincide after going modulo the contractions of $Q$ on both rings. In particular, if $x \in C$, then there exists $z \in Q \cap C \subseteq Q \subseteq \m_V$ such that $x+z \in B_{\m_V \cap B}$. Furthermore, if $x$ is a unit in $C$ then $x+z$ is a unit in $B_{\m_V \cap B}$. Applying this procedure to all the coefficients of $f$ that are not in  $B_{\m_V \cap B}$, one can construct a polynomial $g \in Q[t] \subseteq \m_V[t]$ such that $f+g \in B_{\m_V \cap B}[t]$ and $c(f+g)=B_{\m_V \cap B}$.
 %From this we can conclude using that $B(t) \subseteq \mathcal{V}$. 
%By hypothesis, $B$ is integral over $A$, thus $\overline{B}= \overline{A}$ and also
%$B \cap V$ satisfies the hypothesis of Theorem \ref{VcapA}. It follows that $B(t) \cap V(t) = (B \cap V)(t)$ for every valuation overring $V$ of $D$.  Furthermore,
%$$ A \cap V \subseteq B \cap V \subseteq \overline{A} \cap V = \overline{A \cap V}.   $$
%Hence, $B \cap V $ is integral over $A \cap V$.
%Using that Nagata ring extension commutes with integral extension (cf. \cite[Theorem 3]{gil-hof2}), we get that $(B \cap V)(t)$ is integral over $(A \cap V)(t)$. Thus, it is sufficient to prove that every valuation overring of $R$ contains a ring of the form $(A \cap V)(t)$. This follows by item (ii). \\
\end{proof}

%The idea of this construction are Kronecker rings of a domain $D$ of the form $Kr(\overline{D}) \cap A(t)$ that can be obtained from an intersection $D = \overline{D} \cap A$, where $\overline{A}$ is semilocal and Pr\"{u}fer. 

\begin{remark}
With the setting of the above theorem, if $\overline{A}$ is Pr\"{u}fer, we automatically have $\overline{R'}= Kr(\overline{D})$.
In this case, since we assume all the rings to have finite dimension,
$\overline{A}$ has only finitely many valuation rings, hence all but finitely many localizations of $R'$ at prime ideals are valuation domains of the form $V(t)$. The other localizations are localizations of $A(t)$ or of domains of the form $(A \cap V)(t)$.% (which are described in Theorem \ref{VcapA}).

 For instance, let $D= K[[x^2, x^3, y]]$, $ A= K((y))[[x^2, x^3]]$, and $R'= Kr(\overline{D}) \cap A(t)$. %we have $x^2\overline{A} \subseteq A$.
Then $\overline{R'}= Kr(\overline{D})$ because $\overline{A}$ is Pr\"{u}fer. %either by Theorem \ref{main2}-(iv) or by observing that $A$ contains a ring of the form $A_V$ defined in Construction \ref{construction}.
In this case all but two localizations of $R'$ at its prime ideals are of the form $V(t)$. The two remaining localizations are $A(t)$ and $(A \cap W)(t)$ where $W$ is the rank two valuation overring of $D$ having $\overline{A}$ as unique minimal overring. 
%The same happens also for the ring $D$ of Example \ref{ex53} choosing $A=C_V$ for some $V \supseteq D'$. Clearly $\overline{D}= D[i]$ is a minimal ring extension of $D$ and $D= \overline{D} \cap A$. %In this case the Jacobson radical of $\overline{A}$ is contained in $A$.
\end{remark}

%\begin{construction}
%\label{construction3} \rm
%specific pullback with power series 
%\end{construction}

%Now we show how the result of Theorem \ref{main2} can be applied to a specific construction, involving generalized power series rings described in Section 3, which allows to study the integral closure of rings of the form $R'$ without using the assumptions from Construction \ref{construction}. First we need a preliminary result on integral extensions of rings between $D(t)$ and $Kr(\overline{D})$.

%To construct examples where $\overline{A}$ is not a valuation, and therefore $\overline{R'} = Kr(\overline{D}) \cap \overline{A}(t) \subsetneq Kr(\overline{D})$ we need to analyze the condition $\overline{A \cap V}= \overline{A} \cap V$, which is the assumption of item (iii) of Theorem \ref{main2}. We first notice that this condition is not satisfied in general, and then we provide a class of examples where it is satisfied.

We construct now examples of Kronecker function ring of $D$ of the form $Kr(\overline{D}) \cap A(t)$ such that $A$ is maximal excluding, but $\overline{A}$ is not a valuation. Therefore $\overline{R'} = Kr(\overline{D}) \cap \overline{A}(t) \subsetneq Kr(\overline{D})$.

\begin{example}
\label{lastexampls}
Let $K$ be any field and let $D'= [[K^S]]$ be a maximal excluding generalized power series ring. % such that $\overline{D'}$ is countably generated as $D$-module. 
Let $z$ be an indeterminate over $\mathcal{Q}(D)$ and set $D = D'[[z]]$. Let $A$ be the generalized power series ring over $K$ obtained by adding to $D$ all the elements of the form $\frac{z}{y}$ for $y \in \m_{D'}$ a monomial element. This ring $A$ can be represented as pullback of $D'$ with respect to the quotient map from $\mathcal{Q}(D')[[z]]$ to its residue field. Hence, by Theorem \ref{pullbacksquare}, $A$ is maximal excluding. By standard properties of pullback diagrams (see \cite{Gabelli-Houston}), $ \overline{A} $ is the pullback of $\overline{D'}$ with respect to the same quotient map. % and therefore it is a valuation domain or a PVD, as $\overline{D'}$ is. 
Moreover, $D = \overline{D} \cap A$. Let $R'= Kr(\overline{D}) \cap A(t)$. 

Notice that $A$ and $\overline{A}$ are both local, therefore $D$, $A$ and $R'$ satisfy the hypothesis of Construction \ref{construction2} without restrictions on the field $K$. %If $D'$ is one of the generalized power series rings of Examples \ref{ex1}, \ref{ex2} and \ref{ex3}, then the assumptions of Lemma \ref{primary} are also satisfied.
%Hence $D$ satisfies the thesis of Theorem \ref{main2}. %Under some extra assumptions, we can show that $ Kr(\overline{D}) \cap \overline{A}(t) $ is the integral closure of $R'$ and we are able to show that all the localizations at the prime ideals of $R'$ are Nagata rings.
%Furthermore, $ \overline{A} $ is local and it satisfies the assumption of Lemma \ref{criterionforintegralclosure} as a consequence of Theorem \ref{VcapA}.
 %To see an example where $\overline{A}$ is not a valuation, and therefore $\overline{R'} = Kr(\overline{D}) \cap \overline{A}(t) \subsetneq Kr(\overline{D})$, we can choose $D'$ to be 
 The generalized power series rings of Example \ref{ex3} and Example \ref{ex4} provide examples where $\overline{A}$ is not a Pr\"{u}fer domain. Those from Examples \ref{ex1} and \ref{ex2} provide examples where $\overline{D}$ is not finitely generated as $D$-module.  %For an example where $\overline{D}$ is not finitely generated as $D$-module we can choose $D'$ to be the generalized power series ring of Example \ref{ex2}.
\end{example}

\section*{Acknowledgements}
The first author is supported by the grants MAESTRO NCN -
UMO-2019/34/A/ST1/00263 - Research in Commutative Algebra and
Representation Theory and NAWA POWROTY - PPN/PPO/2018/1/00013/U/00001 - Applications of Lie algebras to Commutative Algebra.
The first author would like to thank Ohio State University for supporting his research visit in Columbus in May 2022.  
Both authors would like to thank Marco D'Anna and Carmelo A. Finocchiaro for giving them the possibility to visit University of Catania in June 2022 and for interesting conversations about the content of this paper.

\end{document}